
%

\documentclass{amsart}
\usepackage{latexsym, graphicx, amsmath, amssymb, cite}

\newtheorem{theorem}{Theorem}[section]
\newtheorem{lemma}[theorem]{Lemma}
\newtheorem{proposition}[theorem]{Proposition}
\newtheorem{corollary}[theorem]{Corollary}
\newtheorem{conjecture}[theorem]{Conjecture}

\theoremstyle{definition}
\newtheorem{definition}[theorem]{Definition}

\theoremstyle{remark}
\newtheorem{remark}[theorem]{Remark}

\numberwithin{equation}{section}

\newcommand{\R}{{\mathbb R}}

\newcommand{\Z}{{\mathbb Z}}

\newcommand{\T}{{\mathbb T}}

\newcommand{\supp}{{\operatorname{supp}}}
\newcommand{\Hess}{{\operatorname{Hess}}}

\newcommand{\rank}{{\operatorname{rank}}}
\newcommand{\II}{{\operatorname{I\hspace{-0.05em}I}}}
\newcommand{\lap}[1]{{\sqrt{-\Delta_{#1}}}}
\newcommand{\tg}{{\tilde g}}
\newcommand{\inj}{{\operatorname{inj}}}
\newcommand{\vol}{{\operatorname{vol}}}
\newcommand{\bfk}{{\mathbf k}}

\begin{document}

\title{Period Integrals in Nonpositively Curved Manifolds}


\author{Emmett L. Wyman}
\address{Johns Hopkins University}
\curraddr{}
\email{ewyman3@math.jhu.edu}
\thanks{This article was supported in part by NSF grant DMS-1665373.}


\subjclass[2010]{35P20}
\keywords{Eigenfunction restrictions, compact manifold, nonpositive curvature}
\date{}
\dedicatory{}

\begin{abstract} We provide an improvement of a half power of log to standard bounds on integrals of Laplace eigenfunctions over submanifolds of codimension $2$ or more, where the ambient space is a compact Riemannian manifold with negative sectional curvature. We provide the same improvement for hypersurfaces whose second fundamental form differs sufficiently from that of spheres of infinite radius. This result extends those obtained in the $2$-dimensional setting by ~\cite{CSPer,Gauss,emmett1,emmett2}.
\end{abstract}

\maketitle


\section{Introduction}

\subsection{Background}

Let $(M,g)$ be a compact $n$-dimensional Riemannian manifold without boundary. Let $\Delta_g$ denote the Laplace-Beltrami operator, written in local coordinates as
\[
	\Delta_g = |g|^{-1/2} \sum_{i,j} \partial_i(|g|^{1/2} g^{ij} \partial_j).
\]
Let $e_j$ for $j = 0,1,2,\ldots$ form a Hilbert basis of eigenfunctions of $\Delta_g$ with corresponding eigenvalues $\lambda_j$, i.e.
\[
	-\Delta_g e_j = \lambda_j^2 e_j.
\]
We are interested in the relationship between the geometry of $M$ and asymptotic bounds on the means of eigenfunctions over submanifolds as the eigenvalue tends to infinity.

This class of problems has its roots in the theory of automorphic forms, where bounds on the Fourier coefficients of eigenfunctions along closed geodesics are of interest. Using Kuznecov sum formulae, Good ~\cite{Good} and Hejhal ~\cite{Hej} independently obtained bounds
\[
	\int_\gamma e_j \, ds = O(1)
\]
where $M$ is a compact hyperbolic surface and $\gamma$ a closed geodesic. Later Zelditch ~\cite{ZelK} extended this result to the general Riemannian setting and obtained a Kuznecov sum formula
\begin{equation} \label{zelditch}
	\sum_{\lambda_j \leq \lambda} \left| \int_\Sigma e_j \, d\sigma \right|^2 \sim \lambda^{n-d} + O(\lambda^{n-d-1})
\end{equation}
for $d$-dimensional submanifolds $\Sigma$, where $d\sigma$ is the surface element on $\Sigma$. This provides the general bound,
\begin{equation} \label{general bound}
	\left| \int_\Sigma e_j \, d\sigma \right| = O(\lambda_j^\frac{n-d-1}{2}),
\end{equation}
which is optimal on the sphere for \emph{any}\footnote{Indeed, by \eqref{zelditch} and the fact that the gaps between successive distinct eigenvalues on $S^n$ approach a constant. See \cite{emmett3} for a detailed argument.} submanifold $\Sigma$.

In ~\cite{Rez}, Reznikov extended the bounds in ~\cite{Good,Hej} to geodesic circles and closed horocycles in hyperbolic surfaces of finite area, and put forth a conjecture for optimal bounds.

\begin{conjecture}[~\cite{Rez}] \label{reznikov conjecture} Let $M$ be a compact hyperbolic surface and $\gamma$ a closed geodesic or geodesic circle. Then,
\[
	\left| \int_\gamma e_j \, ds \right| = O(\lambda_j^{-1/2 + \epsilon})
\]
for all $\epsilon > 0$.
\end{conjecture}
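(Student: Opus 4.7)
The conjecture demands a genuine power saving $\lambda_j^{-1/2+\epsilon}$, which lies beyond the reach of purely microlocal or dynamical methods on $M$: a Cauchy-Schwarz application to the Kuznecov sum \eqref{zelditch} yields only $O(1)$, and the wave-kernel/semiclassical improvements available in negative curvature are at best logarithmic. The only known route to power savings is arithmetic, so my plan is to assume $M = \Gamma\backslash\mathbb{H}$ with $\Gamma$ a cocompact congruence lattice (say, arising from an indefinite quaternion algebra over $\Q$), and reduce Conjecture \ref{reznikov conjecture} to Lindel\"of in the spectral aspect for a family of $GL_2$ $L$-functions.

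First, I would realize each $e_j$ as a spherical vector in an irreducible automorphic representation $\pi_j$ of $PGL_2(\R)$, and view $\gamma$ as an orbit of a torus $T \subset PGL_2(\R)$ --- split for closed geodesics, compact for geodesic circles. A Waldspurger/Jacquet-type period identity then expresses
\[
    \Bigl|\int_\gamma e_j \, ds\Bigr|^2 \;=\; C_\gamma \cdot \frac{L\!\left(\tfrac12, \pi_j \otimes \chi_\gamma\right)}{L(1, \mathrm{Ad}\,\pi_j)},
\]
where $\chi_\gamma$ is a Hecke character determined by $T$ and the arithmetic of $\gamma$, and $C_\gamma$ collects archimedean and conductor factors bounded independently of $j$. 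Standard archimedean asymptotics of the gamma factors, together with the Hoffstein-Lockhart lower bound on $L(1,\mathrm{Ad}\,\pi_j)$, then reduce the conjecture to the Lindel\"of bound $L(\tfrac12, \pi_j \otimes \chi_\gamma) \ll_\epsilon \lambda_j^\epsilon$.

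Second, I would attack this central $L$-value by amplification. Construct a short Hecke amplifier $A_j = \bigl|\sum_{p\le P}\alpha_p \lambda_{\pi_j}(p)\bigr|^2$ with $\alpha_p$ chosen so that $A_j \gg P^{1-\epsilon}$ on the targeted $\pi_j$; multiply an approximate functional equation for $L(\tfrac12, \pi_j\otimes\chi_\gamma)$ by $A_j$ and sum over a short spectral window $|\lambda_k-\lambda_j|\le 1$ via the Kuznetsov or pre-trace formula. Weil's bound on the resulting Kloosterman sums, combined with stationary-phase analysis of the Bessel transforms, should produce a saving over the convexity exponent; pushing the saving through the Waldspurger identity would give a power saving in the period.

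The main obstacle is precisely the one blocking every modern attack on Lindel\"of. Known spectral-aspect subconvexity results of Michel-Venkatesh type deliver only exponents on the order of $\lambda_j^{-1/6+\epsilon}$ for these Rankin-Selberg $L$-functions, and closing the remaining gap to $\lambda_j^{-1/2+\epsilon}$ appears to demand either a much longer amplifier (of superlogarithmic support) or genuinely new input such as a fourth moment uniform in both $\chi$ and the spectral parameter. For a generic, non-arithmetic cocompact $\Gamma$ there are no Hecke operators at all, and the only conceivable substitute --- a power-saving effective equidistribution theorem for geodesic pushforwards under the wave group --- is currently out of reach even under strong spectral-gap hypotheses. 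Consequently, the strategy above does not by itself close the conjecture; it reduces it to an open central problem in analytic number theory, and I would expect a complete proof only in tandem with progress on spectral-aspect Lindel\"of itself.
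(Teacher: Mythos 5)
This statement is a \emph{conjecture}, not a theorem: the paper attributes it to Reznikov, explicitly remarks that ``the standard techniques will only yield improvements by a power of $\log\lambda_j$,'' and says the conjecture ``seems a long way off.'' There is no proof of it in the paper for your proposal to be compared against, so the correct response was precisely what you gave --- an honest account of why it is open and what the most plausible line of attack looks like. You recognized this, which is the single most important point.

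Your sketch of the arithmetic route is sound as a roadmap: for a cocompact arithmetic $\Gamma$ arising from a quaternion algebra, a Waldspurger-type period formula does convert $\bigl|\int_\gamma e_j\,ds\bigr|^2$ into a central $L$-value $L(\tfrac12,\pi_j\otimes\chi_\gamma)/L(1,\mathrm{Ad}\,\pi_j)$ (split torus for closed geodesics, compact torus for geodesic circles), Hoffstein--Lockhart controls the denominator, and Reznikov's conjectured exponent is then exactly spectral-aspect Lindel\"of for that family. You correctly identify amplification plus Kuznetsov/pre-trace as the tool giving subconvexity (Michel--Venkatesh type, roughly a $\lambda_j^{-1/6+\epsilon}$ saving), and correctly note that the gap from subconvexity to Lindel\"of is the obstruction. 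You also flag the genuinely decisive point the paper itself is circling around: for a non-arithmetic cocompact lattice there are no Hecke operators and hence no known mechanism for any polynomial saving at all; the paper's own contribution in this direction is only the $(\log\lambda_j)^{-1/2}$ improvement of Theorem \ref{main 2}, far short of the conjecture. Two minor caveats worth recording: (i) the Waldspurger identity for the compact-torus case (geodesic circles) requires care with local factors at the archimedean place and at ramified primes, and the $C_\gamma$ you write as ``bounded independently of $j$'' is only so after those local computations are done; (ii) Reznikov's conjecture as stated in the paper is for an arbitrary compact hyperbolic surface, so even a full proof of Lindel\"of in the arithmetic case would leave the general conjecture open. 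Neither caveat changes your conclusion, which agrees with the paper's assessment that the conjecture is out of reach by current methods.
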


\noindent It seems the standard techniques will only yield improvements by a power of $\log \lambda_j$ over the standard bounds. The conjecture, let alone any polynomial improvement over the standard bounds, seems a long way off.

The first improvement on \eqref{general bound} was obtained by Chen and Sogge ~\cite{CSPer}, who used the Gauss-Bonnet theorem to show
\[
	\left| \int_\gamma e_j \, ds \right| = o(1)
\]
where $M$ is a compact surface with negative sectional curvature and $\gamma$ is a geodesic. This result was later improved by Sogge, Xi, and Zhang ~\cite{Gauss} by providing an explicit decay of $O(1/\sqrt{\log \lambda_j})$ under some weaker sectional curvature hypotheses. The author ~\cite{emmett1} extended ~\cite{CSPer} and later ~\cite{Gauss} from geodesics to a wide class of curves satisfying some curvature conditions, albeit without the weakened sectional curvature hypotheses. The result is summarized below.

\begin{theorem}[\cite{emmett2}] \label{wyman theorem} Let $M$ be a compact Riemannian surface without boundary with nonpositive sectional curvature. For each $p \in M$ and $v \in S_pM$, let $\bfk(v)$ denote the limit of the curvature of the circular arc at $p$ with center taken out to infinity along the geodesic ray in direction $v$. Then, if $\gamma$ is a closed curve in $M$ such that
\[
	\kappa_\gamma \neq \bfk(v) \qquad \text{ for all normal vectors $v$ to $\gamma$,}
\]
then
\[
	\int_\gamma e_j \, ds = O(1/\sqrt{\log \lambda_j}),
\]
where here $\kappa_\gamma$ denotes the geodesic curvature of $\gamma$.
\end{theorem}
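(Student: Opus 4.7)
My plan is to follow the Hadamard-parametrix-on-universal-cover strategy. First, I would choose a Schwartz function $\chi$ with $\chi(0) = 1$ and $\widehat\chi$ compactly supported, and set $T = c \log \lambda_j$ for a small constant $c > 0$. Nonnegativity gives
\[
\left| \int_\gamma e_j\, ds \right|^2 \le \sum_k \chi^2(T(\lambda_k - \lambda_j)) \left| \int_\gamma e_k\, ds \right|^2 = \int_\gamma\!\int_\gamma K(x,y)\, ds(x)\, ds(y),
\]
where $K(x,y) = \sum_k \chi^2(T(\lambda_k - \lambda_j)) e_k(x) \overline{e_k(y)}$. By Fourier inversion, $K$ can be written as a $t$-integral against the half-wave kernel cut off at scale $T^{-1}$ around $\lambda_j$, so the theorem reduces to showing the double integral above is $O(T^{-1})$.

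Next, since $g$ has nonpositive curvature, Hadamard's theorem tells us the universal cover $(\tilde M, \tilde g)$ is diffeomorphic to $\R^2$, and for $c$ small enough finite propagation speed gives
\[
K(x,y) = \sum_{\alpha \in \Gamma} K^\alpha(x,y),
\]
with $\Gamma$ the deck group and $K^\alpha$ defined by replacing the half-wave kernel of $g$ with that of $\tilde g$ evaluated at $(\tilde x, \alpha \tilde y)$. I would handle the identity term $\alpha = e$ by the standard Zelditch--Sogge analysis of Kuznecov sums, which yields a contribution of $O(T^{-1})$. For a nonidentity $\alpha$, the Hadamard parametrix for the half-wave kernel combined with stationary phase in $t$ represents $K^\alpha$ as a principal oscillatory term
\[
K^\alpha(x,y) = T^{-1} a_\alpha(x,y)\, e^{i \lambda_j d_{\tilde g}(\tilde x, \alpha \tilde y)} + \text{lower order},
\]
with amplitude $a_\alpha$ of size $\lambda_j^{1/2} d_{\tilde g}(\tilde x, \alpha\tilde y)^{-1/2}$. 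The resulting double integral over $\gamma \times \gamma$ becomes an oscillatory integral in the curve parameters with phase $\lambda_j d_{\tilde g}(\tilde x, \alpha\tilde y)$; its critical points are exactly the pairs $(\tilde x, \tilde y)$ joined by a geodesic meeting $\gamma$ perpendicularly at both endpoints.

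To extract a gain, I would compute the Hessian at a critical point via a Jacobi field calculation: in $\tilde x$ it equals $\lambda_j (\kappa_\gamma - \kappa_{S(r)}(\tilde x))$, where $\kappa_{S(r)}$ is the geodesic curvature of the distance $r$-sphere about $\alpha \tilde y$ and $r = d_{\tilde g}(\tilde x, \alpha\tilde y)$. As $r \to \infty$, $\kappa_{S(r)}(\tilde x) \to \bfk(v)$ with $v$ the outward-pointing normal to $\gamma$ at $\tilde x$, so the hypothesis $\kappa_\gamma \ne \bfk(v)$ supplies a uniform Hessian lower bound for all sufficiently long chords, and the same holds for the Hessian in $\tilde y$. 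Two-dimensional stationary phase then bounds each $\alpha$'s contribution by $O((T \lambda_j)^{-1} d_\alpha^{-1/2})$ per critical chord of length $d_\alpha$. The hard part will be the final summation over $\alpha \ne e$: although the deck-group count with $d \le T$ may grow exponentially, the perpendicular-chord condition is far more restrictive, and a counting argument exploiting the uniform Hessian bound should bound the total non-identity contribution by $o(T^{-1})$. Combined with the identity bound this gives $\int_\gamma\!\int_\gamma K = O(T^{-1})$, and taking square roots yields $\int_\gamma e_j\, ds = O(T^{-1/2}) = O(1/\sqrt{\log \lambda_j})$.
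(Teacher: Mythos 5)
Your outline tracks the correct machinery up to the last step: the reduction to a smoothed Kuznecov sum with $T=c\log\lambda_j$, the lift to the universal cover and decomposition over the deck group, the Hadamard parametrix plus stationary phase in $t$, and the Hessian computation identifying $\kappa_\gamma-\kappa_{S(r)}$ with $\kappa_{S(r)}\to\bfk(v)$ are all in line with how this theorem (and its generalization, Theorem \ref{main 2} here) is actually proved. The genuine gap is the summation over $\alpha\neq e$. You acknowledge the deck-group count up to distance $T$ is exponential, and you propose to beat it by ``a counting argument exploiting the uniform Hessian bound'' for perpendicular chords, asserting the total is $o(T^{-1})$. No such counting argument is given, and none is needed or used in the actual proof: the resolution is that each nonidentity term is bounded by $e^{CT}\lambda^{-n/2}$ (here $n=2$, so $\lambda^{-1}$) \emph{uniformly in $\alpha$}, the number of contributing $\alpha$ is $O(e^{CT})$ by volume comparison and finite propagation speed (\eqref{finite sum}), and the resulting $e^{CT}\lambda^{-1}$ is absorbed into an error $e^{CT}\lambda^{\delta}$ that is negligible against the local term $T^{-1}\lambda^{n-d-1}$ precisely because $T=c\log\lambda$ with $c$ small. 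The polynomial gain from stationary phase beats the exponential factors; no estimate on the number of perpendicular chords, and no decay in the chord length $d_\alpha$, enters. As stated, your claimed per-term bound with amplitude of size $\lambda^{1/2}d_\alpha^{-1/2}$ is also too optimistic: the Hadamard coefficients and the derivatives of $d_{\tilde g}$ only satisfy B\'erard-type bounds growing like $e^{Cd_{\tilde g}}$, so the per-term constants are exponential in $T$, which is exactly why the argument must be closed by the choice of $c$ rather than by summing a convergent series over $\alpha$.

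A second, related soft spot: your Hessian lower bound is derived only at exact critical points (perpendicular chords), but to run stationary phase with constants uniform in $\alpha$ you need either a gradient lower bound or a Hessian lower bound on the \emph{whole} support of the amplitude, including configurations where the chord is merely nearly perpendicular, and you must control the mixed $x$--$y$ second derivatives of $d_{\tilde g}(\alpha\tilde x,\tilde y)$, which do not vanish but are only $O(1/d_{\tilde g})$. In the paper this is arranged by microlocalizing with the operators $B_\lambda$ to a small cone of directions (Lemma \ref{kernel alpha lemma}), splitting into the cases $|\nabla\phi|>\epsilon$ (uniform nonstationary phase, Lemma \ref{stationary phase lemma}(1)) and $|\nabla\phi|\le\epsilon$ on the support (then $v_1,v_2$ are nearly normal, Proposition \ref{curvature properties}(2) replaces sphere curvature by horosphere curvature up to $O(1/R)$, and the hypothesis $\kappa_\gamma\neq\bfk(v)$ gives a Hessian bound valid on all of the support), and by taking the minimal chord length $R$ large. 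Without some version of these uniformizations, the step from ``Hessian nondegenerate at critical points'' to a bound on each $\alpha$-term with constants independent of $\alpha$ does not follow.
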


\noindent For a more detailed definition of the limiting curvature $\bfk$, see ~\cite{emmett1} and ~\cite{emmett2}.

There have been a number of recent improvements on the general bounds assuming some dynamical properties of the geodesic flow. Canzani, Galkowski, and Toth ~\cite{toth} provided a little-$o$ improvement on bounds on averages of Cauchy data over hypersurfaces of eigenfunctions belonging to a sequence with defect measure. The author ~\cite{emmett3} provided a little-$o$ improvement on \eqref{general bound} provided the set of looping directions which depart from and arrive at $\Sigma$ conormally has measure zero. Using quantum defect measures, Canzani and Galkowski ~\cite{CanGal} recently obtained the little-$o$ improvement for a vast range of situations containing results in ~\cite{toth,emmett3}.

\subsection{Statement of Results}

In this article, we provide a generalization of ~\cite{emmett2} to nonpositively curved manifolds of arbitrary dimension. Our first result provides an improvement of $1/\sqrt{\log \lambda_j}$ to \eqref{general bound} if $M$ has negative sectional curvature and $\Sigma$ has codimension at least $2$.

\begin{theorem}\label{main 1} Let $(M,g)$ be a compact, $n$-dimensional Riemannian manifold, without boundary, with negative sectional curvature. Let $\Sigma$ be a closed $d$-dimensional submanifold with $d \leq n-2$. Then,
\[
	\left| \int_\Sigma e_j \, d\sigma \right| = O(\lambda_j^\frac{n-d-1}{2}/\sqrt{\log \lambda_j}),
\]
where $d\sigma$ denotes the induced measure on $\Sigma$.
\end{theorem}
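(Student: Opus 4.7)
The plan is to adapt the $TT^*$ / wave-parametrix method developed in the surface setting of \cite{Gauss, emmett2} to arbitrary dimension. Fix $T = c \log \lambda_j$ with $c > 0$ to be chosen in terms of the curvature bounds, and pick a real even $\chi \in \mathcal S(\R)$ with $\chi(0) = 1$, $\chi \geq 0$, and $\hat\chi \in C_0^\infty((-1,1))$. Since $\chi(T(\lap g - \lambda_j)) e_j = e_j$ and $\|e_j\|_{L^2(M)} = 1$, Cauchy--Schwarz in $L^2(M)$ gives
\[
    \Bigl|\int_\Sigma e_j\, d\sigma\Bigr|^2 \leq \int_\Sigma\!\int_\Sigma K_T(x,y)\, d\sigma(x)\, d\sigma(y),
\]
where $K_T$ is the Schwartz kernel of $\chi^2(T(\lap g - \lambda_j))$, and by Fourier inversion with $\supp \widehat{\chi^2} \subset (-2,2)$,
\[
    K_T(x,y) = \frac{1}{2\pi T} \int_{|s|\leq 2T} \widehat{\chi^2}(s/T)\, e^{-is\lambda_j}\, (\cos s \lap g)(x,y)\, ds.
\]
The goal reduces to proving the double integral above is $O(\lambda_j^{n-d-1}/T)$.

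Lifting to the universal cover $(\widetilde M, \tg)$, which is free of conjugate points by Cartan--Hadamard, I expand $(\cos s\lap g)(x,y) = \sum_{\alpha \in \Gamma}(\cos s \lap{\tg})(\tilde x, \alpha \tilde y)$ and split the $s$-integral at $|s| = r_0 < \inj(M)$. The short-time identity contribution is a unit-window local spectral localizer whose double integral over $\Sigma$ is $O(\lambda_j^{n-d-1})$ by Zelditch's asymptotic \eqref{zelditch}; combined with the $1/T$ prefactor this yields an $O(\lambda_j^{n-d-1}/T)$ main term. For every other piece (the $|s| > r_0$ identity part and each nontrivial $\alpha$) I invoke the Hadamard parametrix on $\widetilde M$, valid up to $|s| \leq 2T$ thanks to the absence of conjugate points, and apply stationary phase in $s$ to rewrite that $\alpha$-contribution as an oscillatory integral over $\Sigma \times \Sigma$ with phase $\lambda_j\, d_{\tg}(\tilde x, \alpha \tilde y)$ and amplitude bounded by $\lambda_j^{(n-1)/2}\, d_{\tg}(\tilde x, \alpha \tilde y)^{-(n-1)/2}$.

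The main difficulty, and the point at which the hypothesis $d \leq n-2$ enters, is extracting decay of order $\lambda_j^{-d}$ from stationary phase in $(x,y)$ over $\Sigma \times \Sigma$. Critical points are the pairs $(x_0, y_0)$ for which the minimizing geodesic from $\tilde x_0$ to $\alpha \tilde y_0$ is $g$-normal to $\Sigma$ at both endpoints, and the Hessian on $T_{x_0}\Sigma \oplus T_{y_0}\Sigma$ is expressible through the second fundamental form of $\Sigma$ and the Riccati operator of the normal Jacobi tensor along that geodesic. In codimension one this Hessian can degenerate when the second fundamental form of $\Sigma$ matches the ``asymptotic horospherical'' operator built from $\bfk$, which is the obstruction removed by the hypothesis of Theorem~\ref{wyman theorem}. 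When $n - d \geq 2$, on the other hand, the two-sided normality condition is overdetermined, and the extra normal directions in $N\Sigma$, combined with a Sard-type argument on the normal exponential map of $\widetilde M$, should force the $2d \times 2d$ Hessian to be uniformly nondegenerate in $\alpha$ without any further hypothesis. Standard stationary phase then gives $O\bigl(\lambda_j^{(n-1)/2-d}\, d_{\tg}(\tilde x_0, \alpha \tilde y_0)^{-(n-1)/2}\bigr)$ for each nontrivial $\alpha$.

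Finally, I sum over $\Gamma$. By Bishop--Gromov volume comparison, the number of $\alpha \in \Gamma$ with $d_{\tg}(\tilde x_0, \alpha \tilde y_0) \in [k, k+1]$ is $O(e^{C_0 k})$ for a constant $C_0$ depending only on the curvature lower bound, so the total nontrivial contribution is $O\bigl(\lambda_j^{(n-1)/2-d}\, e^{C_0 T}\, T^{-(n-1)/2}\bigr)$. Choosing $c < (n-1)/(2 C_0)$ makes this $O(\lambda_j^{n-d-1})$, matching the main term after the $1/T$ prefactor. I expect the principal obstacle to be the Hessian nondegeneracy step: quantitative control of the Jacobi tensor along arbitrarily long geodesics in $\widetilde M$, and leveraging the codimension hypothesis to supply enough transversality in $N\Sigma$.
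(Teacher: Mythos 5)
Your overall framework (smoothed spectral projector with $T = c\log\lambda_j$, lift to the universal cover, sum over deck transformations, Hadamard parametrix, stationary phase over $\Sigma\times\Sigma$, exponential counting of $\Gamma$) matches the paper's strategy, but the step you yourself flag as the ``principal obstacle'' is a genuine gap, and it is resolved in the paper in a different way than you propose. You claim that for $d \leq n-2$ the full $2d\times 2d$ Hessian of $(x,y)\mapsto d_{\tg}(\tilde x,\alpha\tilde y)$ on $T\Sigma\oplus T\Sigma$ is uniformly nondegenerate in $\alpha$, ``forced'' by the overdetermined two-sided normality condition plus a Sard-type argument. This is not true in general: by the computation \eqref{hessian computation final}, modulo $O(1/d_{\tg})$ off-diagonal terms the Hessian is block diagonal with blocks essentially $\langle \II_\Sigma - \II_{S_{\tilde y}(d_\tg)}, v_1\rangle$ and $\langle \II_\Sigma - \II_{S_{\alpha\tilde x}(d_\tg)}, -v_2\rangle$ restricted to $T\Sigma$, and nothing in the codimension hypothesis prevents $\langle\II_\Sigma,v_1\rangle$ from matching the (nearly horospherical) sphere curvature in some tangent direction for some bad $\alpha$, which kills one block and hence the full Hessian. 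Moreover a Sard-type argument cannot deliver the uniformity in $\alpha$ that the sum over $e^{CT}$ terms requires, and the polynomial amplitude decay $d_\tg^{-(n-1)/2}$ you invoke is not available in variable curvature (B\'erard's bounds only give $e^{Cd_\tg}$ control on the parametrix coefficients).

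The hypothesis $d\leq n-2$ enters the paper's argument in a different, purely exponent-counting way, and correspondingly a much weaker per-term decay suffices. One does not need $\lambda^{-d}$ per term, only $\lambda^{-d/2}$: since the amplitude carries $\lambda^{(n-1)/2}$ and the number of surviving $\alpha$ is $O(e^{CT}) = O(\lambda^{Cc})$, the total nontrivial contribution is $O(\lambda^{(n-d-1)/2 + Cc})$, which is $o(\lambda^{n-d-1}/T)$-compatible precisely because $(n-d-1)/2 < n-d-1$, i.e.\ because $d \leq n-2$. To get $\lambda^{-d/2}$ uniformly, the paper microlocalizes with the operators $B_\lambda$ so that on the surviving terms the connecting geodesic is nearly conormal at both endpoints ($v_1\approx v_2$ close to a fixed normal direction), and then, diagonalizing $\II_\Sigma$ at the base point, observes that for each principal direction $i$ at least one of the two diagonal entries $\langle\kappa_i,v_1\rangle + \langle\II_{H(-v_1)},-v_1\rangle$ (in $x_i$) or $\langle\kappa_i,-v_2\rangle + \langle\II_{H(v_2)},v_2\rangle$ (in $y_i$) is bounded below by a positive constant, using strict negativity of the curvature to make $\langle\II_{H(v)},v\rangle \geq a > 0$ (Proposition \ref{curvature properties}) and the sign flip between $v_1$ and $-v_2$. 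Stationary phase is then performed in only those $d$ selected variables (with the complementary $d$ variables integrated trivially), using Proposition \ref{curvature properties}(2) and large $R$ to replace spheres by horospheres and the mixed-Hessian bound $O(1/R)$ to make the error matrix small uniformly in $\alpha$. If you want to salvage your outline, replace the full-Hessian nondegeneracy claim by this half-variables selection argument; the rest of your accounting then goes through with $\lambda^{-d/2}$ in place of $\lambda^{-d}$.
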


Our second result treats the codimension $1$ case and requires a generalization of the limiting curvature $\bfk$ from the two-dimensional case.

\begin{definition}\label{limiting curvature}
Fix $p \in M$ and a unit vector $v \in S_pM$ and consider a unit-speed geodesic 
\begin{align*}
		\gamma : \R &\to M \\
			r &\mapsto \gamma(r)
\end{align*}
with $\gamma'(0) = v$. We denote by $\II_{H(v)}$ the quadratic form on $N_p\gamma$ defined as follows. Fix $X \in N_p\gamma$ and let $J_X$ denote the unique Jacobi field for which $J_X(0) = X$ and
\[
	|J_X(r)| = O(1) \qquad \text{ for } r \geq 0.
\]
Then, set
\[
	\II_{H(v)}(X,X) = \left\langle - \frac{D}{dr} J_X(0) , X \right\rangle v.
\]
$\II_{H(v)}$ extends to a vector-valued symmetric bilinear form on $N_p\gamma$ by the usual trick,
\[
	\II_{H(v)}(X,Y) = \frac{1}{2} (\II_{H(v)}(X+Y,X+Y) - \II_{H(v)}(X,X) - \II_{H(v)}(Y,Y)).
\]
\end{definition}

We prove that $\II_{H(v)}$ is well-defined and continuous in $v$, and that $\langle \II_{H(v)}, v\rangle$ is positive semidefinite in Proposition \ref{limiting curvature prop}. The geometric meaning of $\II_{H(v)}$ is clearer in the universal cover. By the Cartan-Hadamard theorem, we identify the universal cover $\tilde M$ with $\R^n$ and consider a lift $\tilde p \in \tilde M$ of $p$ and a lift $\tilde v \in S_{\tilde p} \tilde M$ of $v$. Let $H(\tilde v)$ denote the hyperpersurface obtained as a limit of the spheres intersecting $\tilde p$ with centers taken out along $\gamma(r)$ as $r \to -\infty$. Then, $\II_{H(v)}$ coincides with the second fundamental form of $H(\tilde v)$. Note that $H(\tilde v)$ are exactly the \emph{horospheres} in the hyperbolic setting, and we will use the same name even if $M$ has nonconstant curvature.

Our second main theorem, which pertains to period integrals over hypersurfaces, requires hypotheses on the quadratic forms $\langle \II_\Sigma - \II_{H(v)}, v \rangle$ on $T\Sigma$ for each unit normal vector $v$.

\begin{theorem} \label{main 2} Let $(M,g)$ be as in Theorem \ref{main 1} except allow $M$ to have nonpositive sectional curvature, and let $\Sigma$ be a closed hypersurface. If
\begin{equation} \label{main 2 hypotheses}
	\rank(\langle \II_\Sigma - \II_{H(v)}, v \rangle ) + \rank(\langle \II_\Sigma - \II_{H(-v)}, -v \rangle ) \geq n \qquad \text{ for each } v \in SN\Sigma,
\end{equation}
then
\begin{equation} \label{main 2 bound}
	\int_\Sigma e_j \, d\sigma = O(1/\sqrt{\log \lambda_j}).
\end{equation}
\end{theorem}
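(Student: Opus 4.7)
The plan is to square the period integral and test against a logarithmically wide spectral window. Fix $\chi \in \mathcal{S}(\R)$ with $\chi \geq 0$, $\chi(0) = 1$, and $\hat\chi$ supported in $(-1,1)$, and set $T = c \log \lambda$ for a small $c > 0$ to be fixed in terms of the sectional curvature bounds on $M$. Since $\chi(0) = 1$ it is enough to show
\[
\sum_j \chi(T(\lambda - \lambda_j)) \left| \int_\Sigma e_j \, d\sigma \right|^2 = O(1/\log \lambda),
\]
and Fourier inversion rewrites the left side, up to harmless parity manipulations, as
\[
\frac{1}{2\pi T} \int_{-T}^{T} \hat\chi(t/T) e^{i\lambda t} \int_\Sigma \int_\Sigma \cos(t\lap{g})(x,y) \, d\sigma(x) d\sigma(y) \, dt.
\]

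By the Cartan--Hadamard theorem I identify the universal cover $(\tilde M, \tg)$ with $\R^n$ and decompose $\cos(t\lap{g})(x,y) = \sum_{\alpha \in \Gamma} \cos(t\lap{\tg})(\tilde x, \alpha \tilde y)$, with $\Gamma$ the deck transformation group. The identity term $\alpha = e$ is handled by the usual Hadamard parametrix on $\tilde M$ and contributes $O(1/\log \lambda)$, which already matches the target; the task reduces to the off-diagonal sum. Applying an exact Hadamard parametrix on $\tilde M$ and a $t$-stationary phase concentrated at $t = \pm d_{\tg}(\tilde x, \alpha \tilde y)$, each off-diagonal term becomes an oscillatory integral of the form
\[
\int_\Sigma \int_\Sigma e^{i\lambda \phi_\alpha(x,y)} b_\alpha(x,y;\lambda) \, d\sigma(x) d\sigma(y) + O(\lambda^{-N}),
\]
where $\phi_\alpha(x,y) = d_{\tg}(\tilde x, \alpha \tilde y)$ and $b_\alpha$ inherits from the parametrix an exponential decay in $\phi_\alpha$ coming from Jacobi field volume comparison.

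The heart of the argument is stationary phase for this double integral, and is where hypothesis \eqref{main 2 hypotheses} enters. Critical points of $\phi_\alpha$ on $\Sigma \times \Sigma$ are exactly pairs $(x_0, y_0)$ for which the $\tg$-geodesic from $\tilde x_0$ to $\alpha \tilde y_0$ meets $\Sigma$ conormally at both endpoints. At such a point $\Hess \phi_\alpha$ decomposes into two diagonal blocks plus a coupling block; the Jacobi equation identifies the $x$-block with $\langle \II_\Sigma - \II_{S_r(\alpha \tilde y_0)}, v \rangle$, the $y$-block with $\langle \II_\Sigma - \II_{S_r(\tilde x_0)}, -v \rangle$, and the coupling block has entries of size $O(e^{-c_1 r})$, where $r = \phi_\alpha(x_0, y_0)$, $v$ is the unit tangent of the connecting geodesic, and $S_r(\cdot)$ is the $\tg$-geodesic sphere of radius $r$. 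As $r \to \infty$, Definition \ref{limiting curvature} and Proposition \ref{limiting curvature prop} send the diagonal blocks to $\langle \II_\Sigma - \II_{H(v)}, v \rangle$ and $\langle \II_\Sigma - \II_{H(-v)}, -v \rangle$ respectively, so hypothesis \eqref{main 2 hypotheses} forces $\Hess \phi_\alpha$ to have rank at least $n$ uniformly for all $\alpha \neq e$ with $r$ above a fixed threshold, with the finitely many remaining $\alpha$ absorbed in the identity contribution. Clean stationary phase on a critical submanifold of codimension $n$ then yields per-orbit decay $\lambda^{-n/2}$, which together with the exponential decay of $b_\alpha$ outpaces the $O(e^{(n-1)T})$ volume growth of the deck orbit inside a ball of radius $T$ provided $c$ is chosen small enough.

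The principal obstacle is making the rank lower bound on $\Hess \phi_\alpha$ uniform in $\alpha$. The diagonal blocks only converge asymptotically to their horospherical limits, so one needs a quantitative rate of convergence of $\II_{S_r(\tilde y)}$ to $\II_{H(v)}$ as $r \to \infty$, uniform in the basepoint, together with a guarantee that the exponentially small coupling block cannot conspire with small eigenvalues of the diagonal blocks to destroy the $n$-dimensional nondegenerate subspace. Both should follow from the Jacobi field comparison underlying Definition \ref{limiting curvature}, but the explicit constants are delicate, especially since one must also rule out small eigenvalues arising from the exceptional direction $v$ itself in the normal bundle of $\Sigma$ (the rank hypothesis is stated precisely to allow this single degeneracy at each endpoint). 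A secondary technical point is that the critical set is generically a submanifold rather than a discrete set, so clean stationary phase with parameters must be invoked and the resulting Hessian determinant tracked carefully to verify that the per-orbit decay $\lambda^{-n/2}$ is actually attained.
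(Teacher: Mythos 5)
The proposal captures the right overall framework (square the period integral, insert a $T=c\log\lambda$ spectral window, lift to the universal cover, run a Hadamard parametrix, and finish with stationary phase over $\Sigma\times\Sigma$ with the rank hypothesis \eqref{main 2 hypotheses} forcing a nondegenerate direction of dimension $n$ in the Hessian of $\phi_\alpha$), but it leaves unresolved precisely the step the paper builds its argument around: uniform control of the Hessian on the support of the amplitude, not merely at critical points. Your sketch proposes ``clean stationary phase on a critical submanifold of codimension $n$,'' but you concede yourself that the diagonal blocks of $\Hess\phi_\alpha$ only converge asymptotically to $\langle \II_\Sigma - \II_{H(\pm v)}, \pm v\rangle$, that the coupling block could interact with small eigenvalues, and that the critical set may not be a manifold. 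These are not fringe technicalities; they are the core obstacle, and the proposal does not meet them.

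The paper resolves them with a microlocalization step that your outline drops entirely. Before lifting to the cover, it conjugates by pseudodifferential cutoffs $B_\lambda$ with symbols supported in small conic neighborhoods $U\subset T^*M$ (see \eqref{B op}, \eqref{B symbol}, and Lemma \ref{kernel alpha lemma}). This filters the orbit sum to deck transformations $\alpha$ for which the connecting geodesic departs $\tilde y$ and arrives at $\alpha\tilde x$ in directions lying in $U$; by shrinking $U$ one may assume the geodesic is \emph{nearly} conormal everywhere on the support of the amplitude, not just at critical points. Combined with (i) taking the cutoff radius $R$ large so that $\II_{S_r}\to\II_{H}$ uniformly with explicit rate (Proposition \ref{curvature properties}, part (2), which gives $0<v(r)-u(r)\leq 1/r$), (ii) shrinking the coordinate ball $B$ so that Christoffel corrections are small, and (iii) using the $2|X_1||Y_2|/r$ bound on the cross-block terms from Lemma \ref{hessian computation} (2), one obtains \eqref{hessian at 0}: the Euclidean Hessian on all of $2B\times2B$ is a fixed block-diagonal matrix plus an error $E(x,y)$ controllable by an adjustable constant, uniformly in $\alpha$. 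This is what makes hypothesis \eqref{main 2 hypotheses}, which is stated only for $v\in SN\Sigma$, applicable.

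A second, smaller divergence is the stationary phase device. The paper does not invoke clean stationary phase on a critical manifold. Instead, once \eqref{hessian at 0} is in place, it selects $n$ coordinates out of the $2(n-1)$ in $(x',y')$ — taking the $x_i$ from $V_1$ and the $y_j$ from $V_2$ using the two subspaces furnished by \eqref{main 2 hypotheses} — so that the corresponding $n\times n$ submatrix of $\nabla^2\phi$ is uniformly nondegenerate on the full support of the amplitude. Then standard stationary phase in those $n$ variables (Lemma \ref{stationary phase lemma}, part (2)) gives $\lambda^{-n/2}$ pointwise in the remaining coordinates, which are then integrated trivially. This avoids any regularity hypothesis on the critical set and also covers the case where the full Hessian has rank larger than $n$. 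If you want to repair your sketch along the paper's lines, the microlocalization and the resulting uniform Hessian estimate \eqref{hessian at 0} are the ingredients you need to supply; without them the rank hypothesis cannot be turned into a usable lower bound on $|\nabla^2_z\phi\,\xi|$.
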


\begin{remark}\label{generalizations} The results of Theorems \ref{main 1} and \ref{main 2}, and of Corollary \ref{criteria} to follow, still hold if we replace the eigenfunctions by quasimodes $\Psi_\lambda$ with $\|\Psi_\lambda\|_{L^2} \leq 1$ and with spectral support on the frequency band $[\lambda, \lambda + \frac{1}{\log \lambda}]$. The submanifold $\Sigma$ need not be closed, either, provided the surface element $d\sigma$ is multiplied by some smooth, compactly supported cutoff. This will be made apparent in the next section.
\end{remark}

The arguments in Section \ref{GEOMETRY} allow us to pick out some criteria for hypersurfaces which satisfy the hypotheses of Theorem \ref{main 2}. As a consequence, we have the following corollaries. (See Proposition \ref{curvature properties} and Remark \ref{curvature remark} for details.)

\begin{corollary} \label{criteria}
	Let $M$ and $\Sigma$ be as in Theorem \ref{main 2}. Then $\Sigma$ satisfies \eqref{main 2 hypotheses} and hence \eqref{main 2 bound} if any of the following hold.
	\begin{enumerate}
		\item At each point in $\Sigma$, at least $n/2$ of the principal curvatures of $\Sigma$ lie outside the interval $[a,b]$, where $0 \leq a \leq b$ are constants such that
		\[
			0 \geq -a^2 \geq K \geq -b^2
		\]
		on $M$, where $K$ is the sectional curvature of $M$.
		\item $\Sigma$ is a geodesic sphere in $M$.
		\item $M$ has strictly negative curvature and $\Sigma$ is a totally geodesic hypersurface.
	\end{enumerate}
\end{corollary}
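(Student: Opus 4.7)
The plan is to reduce each of (1)--(3) to verifying the pointwise rank inequality \eqref{main 2 hypotheses} at a fixed point $p \in \Sigma$ with fixed unit normal $v$. Writing $A = \langle \II_\Sigma, v\rangle$ and $B_\pm = \langle \II_{H(\pm v)}, \pm v\rangle$ as symmetric forms on $T_p\Sigma = N_p\gamma$, and noting $\langle \II_\Sigma, -v\rangle = -A$, the inequality reads
\[
\rank(A - B_+) + \rank(-A - B_-) \geq n.
\]
The key preparatory input, which I would extract from Proposition \ref{limiting curvature prop} and Proposition \ref{curvature properties}, is a Riccati/Jacobi-field comparison: under the pinching $-b^2 \leq K \leq -a^2$ one has $aI \leq B_\pm \leq bI$; under strict negative curvature $B_\pm > 0$; and under merely nonpositive curvature $B_\pm \geq 0$.

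For (1), I would use a Courant--Fischer min--max argument. Let $\lambda_1, \ldots, \lambda_{n-1}$ be the eigenvalues of $A$. Since $B_+ \leq bI$, the form $A - B_+$ is positive definite on the span of $A$-eigenvectors with $\lambda_i > b$, producing at least $\#\{\lambda_i > b\}$ positive eigenvalues; since $B_+ \geq aI$, it is negative definite on the span where $\lambda_i < a$, producing at least $\#\{\lambda_i < a\}$ negative eigenvalues. Thus
\[
\rank(A - B_+) \geq \#\{\lambda_i > b\} + \#\{\lambda_i < a\},
\]
and applying the same bound to $-A$ in place of $A$ gives
\[
\rank(-A - B_-) \geq \#\{\lambda_i < -b\} + \#\{\lambda_i > -a\}.
\]
A direct case check according to the location of $\lambda_i$ relative to $-b, -a, a, b$ (here one uses $0 \leq a \leq b$) shows that every $\lambda_i$ with $|\lambda_i| \notin [a,b]$ contributes $2$ to the sum of the two right-hand sides, while every other $\lambda_i$ contributes $1$. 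Interpreting (1) as ``at least $n/2$ principal curvatures satisfy $|\lambda_i| \notin [a,b]$'' -- the form forced by orientation invariance -- the total is at least $2(n/2) = n$, and the rank inequality follows.

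For (2), the geodesic sphere $\Sigma$ of radius $r_0$ is a level set of the distance function from its center, so its shape operator $A$ at $p$ arises from the Riccati flow of shape operators of distance spheres starting from a point at parameter $0$; the horosphere shape operator $B_+$ arises as the $t \to \infty$ limit of the same Riccati equation started at $-\infty$. Standard Riccati monotonicity then forces $A > B_+$ strictly, so $A - B_+$ is positive definite and hence of full rank $n - 1$. Since $A > 0$ and $B_- \geq 0$, the form $-A - B_-$ is negative definite, also of rank $n-1$, and the sum is $2(n-1) \geq n$ as $n \geq 2$. For (3) with $\II_\Sigma = 0$, one has $A = 0$; under strictly negative curvature, $B_\pm > 0$ gives $A - B_+ = -B_+$ and $-A - B_- = -B_-$ both negative definite of rank $n - 1$, and again the sum is $2(n-1) \geq n$.

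The routine pieces here -- Riccati comparison of distance-sphere and horosphere shape operators, and the pinched two-sided bound on $B_\pm$ -- are presumably what Proposition \ref{curvature properties} assembles. The one step that requires genuine care is the min--max bookkeeping in (1): one must verify that the individual contributions of each principal curvature to $\rank(A - B_+)$ and $\rank(-A - B_-)$ combine to at least $n$, which is what motivates the unsigned interpretation of the hypothesis and what ultimately justifies the quantitative form of the statement.
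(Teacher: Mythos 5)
Your argument is correct and matches the paper's intended proof: parts (2) and (3) are exactly the content of Remark \ref{curvature remark} (Proposition \ref{curvature properties}(2) gives $A - B_+ > 0$ for geodesic spheres, and strict negativity of $K$ together with Proposition \ref{curvature properties}(1) gives $B_\pm > 0$ in the totally geodesic case), while your Courant--Fischer counting for (1) is precisely the elaboration, left implicit in the paper, of the two-sided bound $aI \le B_\pm \le bI$ from Proposition \ref{curvature properties}(1), under the unsigned reading of the hypothesis that the statement indeed requires (a signed, fixed-normal reading would be falsified by a horosphere piece). The only quibble is your parenthetical claim that every principal curvature with $|\lambda_i| \in [a,b]$ contributes $1$, which fails in the harmless edge case $a = 0$, $\lambda_i = 0$ (contribution $0$); since your final bound uses only the contribution of $2$ from the at least $n/2$ curvatures outside the interval, nothing is affected.
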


Note Theorem \ref{main 2} not only generalizes Theorem \ref{wyman theorem} to hypersurfaces of arbitrary manifolds, but it is stronger even in the two-dimensional case. In Theorem \ref{main 2}, the curvature of $\gamma$ is \emph{signed}, and in Theorem \ref{wyman theorem} it is not. This lets us apply Theorem \ref{main 2} to all spheres, not just those of some bounded radius (for reference, see ~\cite[Corollary 1.6]{emmett2}).

Section 2 is dedicated to reducing Theorems \ref{main 1} and \ref{main 2} to bounds on a kernel involving the half wave operator. Following this, we lift our computation to the universal cover $(\tilde M,\tg)$ of $M$, which we identify with $\R^n$ by the Cartan-Hadamard theorem as in ~\cite{Berard,CSPer,Gauss,emmett2}. We then rephrase the kernel as a sum of kernels over the group of deck transformations $\Gamma$ associated with the covering map. Section 3 is dedicated to writing these summands as oscillatory integrals, roughly 
\begin{equation}\label{intro osc int}
	\sum_{\alpha \in \Gamma} \int_\Sigma \int_\Sigma a_\alpha(x,y) e^{i\lambda \phi_\alpha(x,y)} \, d\sigma(x) \, d\sigma(y)
\end{equation}
with phase function
\[
	\phi_\alpha(x,y) = d_\tg(\alpha \tilde x, \tilde y)
\]
where $\tilde x$ and $\tilde y$ are respective lifts of $x$ and $y$ to $\tilde M$, and where $d_\tg$ denotes the distance function on the universal cover $\tilde M$. Section 4 is dedicated to computing and bounding derivatives of the phase function so that we can use the method of stationary phase in Section 5. Theorems \ref{main 1} and \ref{main 2} follow if we can show each of the non-identity terms of \eqref{intro osc int} is $O(\lambda^{-d/2})$ and $O(\lambda^{-n/2})$, respectively, where the constants implied by the big-$O$ notation are sufficiently uniform.

\subsection{Examples and Limitations of Theorems \ref{main 1} and \ref{main 2}} There are two examples of manifolds which help to illustrate Theorems \ref{main 1} and \ref{main 2}: the flat torus and a compact hyperbolic manifold. These two examples help to motivate the statements of Theorems \ref{main 1} and \ref{main 2}. At the same time, these specific examples show the deficiencies of our main results and suggest that a more complete picture must be provided with other methods.\\

\noindent \textbf{The Torus.} Suppose $M$ is the flat torus $\T^n = \R^n/2\pi \Z^n$. According to Theorem \ref{main 2}, since $\T^n$ is flat, we should obtain a decay of $O(1/\sqrt{\log \lambda_j})$ on integrals of eigenfunctions $e_j$ of the torus over hypersurfaces $\Sigma$, provided $\Sigma$ has at least $n/2$ nonzero principal curvatures at each point. In ~\cite{HGtorus}, Hezari and Riviere present the following result on the torus, which has both stronger hypotheses and (much) stronger bounds than those in Theorem \ref{main 2}.

\begin{theorem}[\cite{HGtorus}]
	Let $\Sigma$ be a smooth, compact, embedded, oriented hypersurface of $\T^n$ without boundary with surface measure $\sigma$, and suppose all principal curvatures of $\Sigma$ are nonzero at each point on $\Sigma$. Then,
	\[
		\int_\Sigma e_\lambda \, d\sigma = O(\lambda^{-1/2 + \epsilon})
	\]
	where $\epsilon$ is any positive constant, but which is allowed to vanish when $n \geq 5$.
\end{theorem}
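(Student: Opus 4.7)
The plan is to exploit the arithmetic structure of the torus by expanding $e_\lambda$ in its Fourier basis and then combining a classical oscillatory-integral estimate for $\widehat{d\sigma}$ with arithmetic bounds on lattice points on spheres. The eigenfunctions of $\Delta$ on $\T^n$ are the exponentials $e^{i\xi \cdot x}$ with $\xi \in \Z^n$, so any $L^2$-normalized eigenfunction $e_\lambda$ with eigenvalue $\lambda^2$ decomposes as $e_\lambda(x) = \sum_{|\xi| = \lambda} c_\xi e^{i\xi \cdot x}$ with $\sum |c_\xi|^2 \lesssim 1$. Writing $\widehat{d\sigma}(\xi) = \int_\Sigma e^{-i\xi \cdot x}\, d\sigma(x)$ and applying Cauchy--Schwarz in $\xi$ reduces the problem to bounding $\sum_{|\xi|=\lambda} |\widehat{d\sigma}(\xi)|^2$.

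Next I would control $\widehat{d\sigma}(\xi)$ pointwise via stationary phase. The phase $x \mapsto \xi \cdot x$ restricted to $\Sigma$ is stationary exactly at points where $\xi$ is conormal to $\Sigma$, and the hypothesis that every principal curvature of $\Sigma$ is nonzero forces the Hessian at each such critical point to be nondegenerate. A partition of unity, local parametrization of $\Sigma$, and the standard stationary phase lemma then yield the uniform bound $|\widehat{d\sigma}(\xi)| = O(|\xi|^{-(n-1)/2})$ as $|\xi| \to \infty$, which is sharp under just this curvature assumption.

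Combining these two inputs gives
\[
	\left|\int_\Sigma e_\lambda \, d\sigma\right|^2 \lesssim \lambda^{-(n-1)}\, N(\lambda^2), \qquad N(\lambda^2) := \#\{\xi \in \Z^n : |\xi|^2 = \lambda^2\},
\]
so the remaining task is purely arithmetic, and this is where I expect the main difficulty to lie. For $n \geq 5$ the Hardy--Littlewood circle method delivers the clean bound $N(\lambda^2) = O(\lambda^{n-2})$, and plugging in produces exactly $O(\lambda^{-1/2})$ with no $\epsilon$ loss. For $n = 2, 3, 4$ the sharpest available bounds on sums of two, three, and four squares each carry an unavoidable $\lambda^\epsilon$ loss---a divisor estimate in dimension two, Duke-type bounds in dimension three, and Jacobi's four-square formula combined with $\sigma(k) = O(k^{1+\epsilon})$ in dimension four---yielding the stated $O(\lambda^{-1/2+\epsilon})$ bound. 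In this approach the hard step is therefore number-theoretic rather than analytic; removing the $\epsilon$ in low dimensions would require genuinely new arithmetic input rather than sharper harmonic analysis.
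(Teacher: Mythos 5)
Your proof is correct and takes essentially the same route as the cited Hezari--Rivi\`ere argument, which the paper itself summarizes: expand $e_\lambda$ in exponentials, use the stationary-phase bound $|\widehat{d\sigma}(\xi)| = O(|\xi|^{-(n-1)/2})$ from the nonvanishing-curvature hypothesis, apply Cauchy--Schwarz, and reduce to the lattice-point counts \eqref{lattice point bound}, exactly as in the paper's own sub-torus computation. (Only a minor attribution quibble: the $n=3$ bound $r_3(m)\ll m^{1/2+\epsilon}$ already follows from classical class-number estimates and does not require Duke-type input.)
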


\noindent Hezari and Riviere explain that the problem comes down to counting lattice points on spheres, for which there are bounds
\begin{equation}\label{lattice point bound}
	\#\{ m \in \Z^n : |m| = \lambda\} = O(\lambda^{n-2 + \epsilon}) \qquad \text{ for all } \epsilon > 0
\end{equation}
but where $\epsilon$ is allowed to vanish when $n \geq 5$. At the same time if $\Sigma$ is a rational hyperplane in $\T^n$, one may pick out a sequence of exponentials with eigenvalues tending to infinity whose restrictions to $\Sigma$ are a constant. In this sense, some nonvanishing curvature conditions on $\Sigma$ are necessary to obtain decay.

Now consider the situation where $d \leq n-2$. Theorem \ref{main 1} requires that the sectional curvature of $M$ be strictly negative. However, it is reasonable to ask if a similar result applies in the flat setting. Consider the specific case where $\Sigma = \T^d \times \{0\}^{n-d}$ for $d \leq n-2$. By writing $e_\lambda$ as a linear combination of exponentials and using Cauchy-Schwarz, we obtain
\[
	\left| \int_\Sigma e_\lambda \, d\sigma \right| \leq \left( \sum_{|m| = \lambda} |\hat \sigma(m)|^2 \right)^{1/2}
\]
where
\[
	\hat \sigma(m) = \int_{\T^d} e^{-i\langle x, m' \rangle} \, dx
	= \begin{cases}
		(2\pi)^d & m' = 0 \\
		0 & m' \neq 0.
	\end{cases}
\]
where $m' = (m_1,\ldots, m_d)$ are the first $d$ coordinates of $m$. This and \eqref{lattice point bound} yields
\[
	\left| \int_\Sigma e_\lambda \, d\sigma \right| = O(\lambda^\frac{d-2+\epsilon}{2}) \qquad \text{ for all } \epsilon > 0,
\]
which is better than the bound in Theorem \ref{main 1}. Though this computation applies only to a few specific submanifolds, it suggests that Theorem \ref{main 1} may apply to $M$ with merely nonpositive sectional curvature. However, this result is inaccessible with the methods used to prove of our main results. Indeed, we require negative curvature to obtain a uniform constant and finish the proof of Theorem \ref{main 1} in Chapter 5.
\\

\noindent \textbf{A Compact Hyperbolic Manifold.} Suppose $M$ is a compact hyperbolic manifold, i.e. the sectional curvature is identically $-1$. By the corollary, Theorem \ref{main 2} requires that at least $n/2$ of the principal curvatures of $\Sigma$ not be $1$. We ask whether we require the full conditions on $\Sigma$ to obtain the improved bound \eqref{main 2 bound}. Consider the extreme example where $\Sigma$ is precisely a horosphere\footnote{There are no closed horospheres in a compact hyperbolic manifold. However by Remark \ref{generalizations}, it suffices to consider a small, embedded piece of a horosphere with surface measure $\sigma$ which as been multiplied by some smooth, compactly supported bump function.} in $M$. Is the standard bound
\[
	\int_\Sigma e_\lambda \, d\sigma = O(1)
\]
sharp like it is for rational hyperplanes in the torus? A recent result by Canzani, Galkowski, and Toth ~\cite{toth} shows that if $e_\lambda$ is a quantum ergodic sequence, its average over \emph{any} hypersurface will be $o(1)$. If we assume the quantum unique ergodicity conjecture, the standard $O(1)$ bound is never sharp regardless of the conditions on the curvature of $\Sigma$. Whether or not we obtain an explicit decay of $O(1/\sqrt{\log \lambda})$ for period integrals over horospheres is an open question.\\

\noindent \textbf{Acknowledgements.} The author would like to thank his advisor, Chris Sogge, for his ongoing support.


\section{A Standard Reduction and Lift to the Universal Cover}

The following reduction is part of the standard strategy for many problems dealing with the asymptotic distributions of eigenfunctions on manifolds (e.g. ~\cite{Berard,ZelK,SZDuke,STZ,CSPer,Gauss} and many others). We follow the example of ~\cite{SZDuke,STZ} and use pseudodifferential operators to microlocalize to cones in $T^*M$ with small support. Afterwards we perform a lift to the universal cover as in ~\cite{Berard,CSPer,Gauss}.

For both the situations in Theorems \ref{main 1} and \ref{main 2}, we will show
\begin{equation}\label{reduction 1}
	\sum_{\lambda_j \in [\lambda, \lambda + T^{-1}]} \left| \int_\Sigma e_j \, d\sigma \right|^2 \lesssim T^{-1} \lambda^{n-d-1} + e^{CT} \lambda^\delta
\end{equation}
where $\delta$ is some exponent less than $n-d-1$ and where we set
\begin{equation} \label{T = c log lambda}
	T = c \log \lambda
\end{equation}
for some sufficiently small $c$.

Now we introduce Fermi-type coordinates about $\Sigma$. Parametrize a small neighborhood in $\Sigma$ with geodesic normal coordinates $x' = (x_1,\ldots, x_d) \in \R^d$. Then take a smooth, orthonormal frame $v_{d+1},\ldots, v_n$ of the normal bundle of $\Sigma$. Writing $x = (x',x^\perp) \in \R^n$ where $x^\perp = (x_{d+1}, \ldots, x_n)$ are the remaining $n-d$ coordinates, the coordinate map
\begin{equation}\label{fermi coordinates}
	(x',x^\perp) \mapsto \exp(x_{d+1}v_{d+1}(x') + \cdots + x_n v_n(x'))
\end{equation}
parametrizes a small neighborhood in $M$ containing a piece of $\Sigma$. By construction,
\begin{equation}\label{fermi local metric}
	g(x',0) = \begin{bmatrix}
		g_\Sigma(x') & 0 \\
		0 & I_{n-d}
	\end{bmatrix}
\end{equation}
where $g_\Sigma$ is the intrinsic metric on $\Sigma$ and $I_{n-d}$ is the $(n-d) \times (n-d)$ identity matrix. We also note for future use that
\[
	g_\Sigma(x') = I_d + O(|x'|^2)
\]
and that the Christoffel symbols associated with the Levi-Civita connection intrinsic to $\Sigma$ vanish at $x' = 0$ ~\cite{doCarmo}. In particular, we can take the Christoffel symbols to be as small as desired by shrinking the neighborhood parametrized by our coordinates.

Take a finite open cover of $\Sigma$ in $M$ of such coordinate charts and with it a subordinate partition of unity
\[
	\sum_i b_i \equiv 1
\]
on $\Sigma$. By the Cauchy-Schwarz inequality,
\[
	\left| \int_\Sigma e_j \, d\sigma \right|^2 = \left| \sum_i \int_\Sigma b_i e_j \, d\sigma \right|^2 \leq C_\Sigma \sum_i \left| \int_\Sigma b_i e_j \, d\sigma \right|^2,
\]
and so \eqref{reduction 1} follows if we can show
\begin{equation}\label{reduction 2}
	\sum_{\lambda_j \in [\lambda, \lambda + T^{-1}]} \left| \int_\Sigma b e_j \, d\sigma \right|^2 \lesssim T^{-1} \lambda^{n-d-1} + e^{CT} \lambda^\delta
\end{equation}
where $b$ is a smooth function on $\Sigma$ with controllably small support, and the constants in the bounds are allowed to depend on $b$. We will take this a step further and microlocalize to small cones in $T^*M$. Take a partition of unity
\[
	\sum_i a_i \equiv 1
\]
of the sphere $S^{n-1} \subset \R^n$, and take smooth bump functions $\beta_0$ and $\beta_1$ both supported on a small interval in $\R$ and for which $\beta_0 \equiv 1$ near $0$ and $\beta_1 \equiv 1$ near $1$. For each $i$, we define operators\footnote{The purpose of the operator $B_\lambda$ is to filter out geodesics which depart $y$ and arrive at $x$ in sufficiently differing directions, as Lemma \ref{kernel alpha lemma} will show in the next section. This strategy was used before by Sogge, Toth, and Zelditch ~\cite{STZ} who obtained improved sup-norm estimates for eigenfunctions on manifolds provided that, at each point, the set of recurrent directions of geodesics has measure zero.}
\begin{equation}\label{B op}
	B_{i,\lambda} f(x) = \frac{1}{(2\pi)^n} \int_{\R^n} \int_{\R^n} e^{i\langle x - y, \xi \rangle} B_{i,\lambda}(x,y,\xi) f(y) \, dy \, d\xi
\end{equation}
with symbol
\[
	B_{i,\lambda}(x,y,\xi) = \beta_0(|x - y|) \beta_0(|x^\perp|) b(x') \beta_1(|\xi|/\lambda) a_i(\xi/|\xi|),
\]
and similarly
\[
	R_{\lambda} f(x) = \frac{1}{(2\pi)^n} \int_{\R^n} \int_{\R^d} e^{i\langle x - y, \xi \rangle} R(\lambda; x,y,\xi) f(y) \, dy \, d\xi
\]
with symbol
\[
	R_\lambda(x,y,\xi) = \beta_0(|x - y|) \beta_0(|x^\perp|) b(x') (1 - \beta_1(|\xi|/\lambda)).
\]
Note
\[
	\int_\Sigma b e_j \, d\sigma = \sum_i \int_\Sigma B_{i,\lambda} e_j \, d\sigma + \int_\Sigma R_\lambda e_j \, d\sigma.
\]
By the same Cauchy-Schwarz argument as before, \eqref{reduction 2} follows provided we can show
\begin{equation} \label{reduction 3}
	\sum_{\lambda_j \in [\lambda,\lambda+T^{-1}]} \left| \int_\Sigma B_\lambda e_j \, d\sigma \right|^2 \lesssim T^{-1} \lambda^{n-d-1} + e^{CT} \lambda^\delta
\end{equation}
where $B_\lambda$ is defined as in \eqref{B op} with symbol
\begin{equation} \label{B symbol}
	B_\lambda(x,y,\xi) = \beta_0(|x - y|) \beta_0(|x^\perp|) b(x') \beta_1(|\xi|/\lambda) a(\xi/|\xi|)
\end{equation}
where $\beta_0$, $\beta_1$, $a$, and of course $b$ all have adjustably small support, and we can show
\begin{equation} \label{reduction remainder}
	\sum_{\lambda_j \in [\lambda,\lambda+T^{-1}]} \left| \int_\Sigma R_\lambda e_j \, d\sigma \right|^2 = O(\lambda^{-\infty}) \qquad \text{ uniformly for } T \geq 1
\end{equation}
where $R_\lambda$ is as above. The latter bound follows from Cauchy-Schwarz inequality applied to the integral and the following proposition whose proof we defer until the end of the section.

\begin{proposition} \label{remainder prop} Let $R_\lambda$ be as above. Then
\[
	\sup_{x \in \Sigma} \sum_{\lambda_j \in [\lambda, \lambda+1]} |R_\lambda e_j(x)|^2 = O(\lambda^{-\infty}).
\]
\end{proposition}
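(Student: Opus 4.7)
The guiding idea is that $R_\lambda$ microlocally cuts off frequencies with $|\xi|/\lambda$ bounded away from $1$, while an eigenfunction $e_j$ with $\lambda_j \in [\lambda,\lambda+1]$ is concentrated on the cosphere $|\xi|_g = \lambda_j \approx \lambda$. These two regions in frequency are disjoint, so $R_\lambda e_j$ should vanish to infinite order in $\lambda$.

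First I would localize $e_j$ in frequency. Choose $\chi \in C_c^\infty(\R)$ with $\chi \equiv 1$ in a neighborhood of $1$ and with $\supp(\chi)$ contained in an interval on which $\beta_1 \equiv 1$. Shrinking the Fermi coordinate patches if necessary so that $g$ is uniformly close to Euclidean on $\supp(\beta_0)$, I can arrange that $|\eta|_g(x) \in \supp(\chi)$ forces $\beta_1(|\eta|) = 1$. By the spectral theorem, $\chi(P/\lambda) e_j = \chi(\lambda_j/\lambda) e_j = e_j$ for every $\lambda_j \in [\lambda,\lambda+1]$ and every sufficiently large $\lambda$, so $R_\lambda e_j = R_\lambda \chi(P/\lambda) e_j$.

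Next I would show that $A_\lambda := R_\lambda \chi(P/\lambda)$ is smoothing of order $-\infty$ uniformly in $\lambda$, with Schwartz kernel of size $O(\lambda^{-\infty})$. Viewing both factors as semiclassical pseudodifferential operators with $h = 1/\lambda$, the left symbol of $R_\lambda$ reduces to $\beta_0(|x^\perp|) b(x')(1 - \beta_1(|\eta|))$, since $\beta_0 \equiv 1$ near $0$ kills the $y$-dependence of the amplitude at $y = x$ to all orders. By Helffer--Sj\"ostrand, $\chi(P/\lambda) = \chi(hP)$ has principal symbol $\chi(|\eta|_g(x))$, and its full asymptotic symbol is supported in $\{|\eta|_g(x) \in \supp(\chi)\}$, where by our choice $1 - \beta_1(|\eta|) \equiv 0$. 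The semiclassical composition formula is an asymptotic series each of whose terms is a product of derivatives of the two symbols; since the supports themselves are disjoint, every term in the expansion vanishes identically, so the symbol of $A_\lambda$ is $O(h^\infty)$ in every seminorm and is compactly supported in $\eta$ thanks to $\chi$. This forces $K_{A_\lambda}(x,y) = O_{C^\infty(M \times M)}(\lambda^{-\infty})$ uniformly.

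Finally, for $\lambda_j \in [\lambda,\lambda+1]$ Cauchy--Schwarz gives
\[
  |R_\lambda e_j(x)|^2 = |A_\lambda e_j(x)|^2 \leq \|K_{A_\lambda}(x,\cdot)\|_{L^2(M)}^2 \, \|e_j\|_{L^2}^2 = O(\lambda^{-\infty}),
\]
and Weyl's law bounds $\#\{\lambda_j \in [\lambda,\lambda+1]\}$ by $O(\lambda^{n-1})$, yielding the claim. The only delicate point is ensuring the two symbols really have disjoint supports despite the mismatch between the Euclidean norm $|\eta|$ appearing in $R_\lambda$ and the Riemannian norm $|\eta|_g$ appearing in $\chi(hP)$; this is the main (mild) obstacle, and is handled by shrinking the coordinate patches so that $g$ is nearly Euclidean on $\supp(\beta_0)$.
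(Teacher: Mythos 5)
Your proposal is correct, but it takes a genuinely different route from the paper. The paper keeps the same reduction shape as Proposition \ref{intermediate prop}: it smooths the spectral window with $\chi(\lambda_j-\lambda)$, writes the resulting sum as $\frac{1}{2\pi}\int \hat\chi(t)e^{-it\lambda}R_\lambda e^{it\sqrt{-\Delta_g}}R_\lambda^*(x,x)\,dt$, inserts H\"ormander's parametrix for the half-wave kernel, and then observes that the critical points of the phase force $\eta=\zeta=\xi$ with $p(x',\xi)$ near $1$, which is excluded from the amplitude because of the factors $(1-\beta_1(|\eta|))(1-\beta_1(|\zeta|))\beta_1(p(z,\xi))$ after shrinking the supports so that $p(x',\xi)=(1+O(|x'|^2))|\xi|$; nonstationary phase then gives $O(\lambda^{-\infty})$. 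You instead exploit the exact reproducing property $\chi(P/\lambda)e_j=e_j$ on the window and prove $R_\lambda\chi(P/\lambda)=O(\lambda^{-\infty})$ as a smoothing operator via the semiclassical composition calculus with disjoint symbol supports, then finish with Cauchy--Schwarz and Weyl's law. Both arguments rest on the same two facts -- ellipticity away from the characteristic set $\{p=1\}$, and shrinking the Fermi patch so that the Euclidean norm $|\eta|$ in $\beta_1$ and the Riemannian norm $|\eta|_g$ are comparable -- but your version trades the wave parametrix and explicit stationary-phase bookkeeping for the (standard but nontrivial) theorem that $\chi(h\sqrt{-\Delta_g})$ is a semiclassical pseudodifferential operator with symbol essentially supported in $\{|\eta|_g\in\supp\chi\}$. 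Two points you should make explicit if you write this up: apply Helffer--Sj\"ostrand to $-h^2\Delta_g$ and use that $\chi$ is supported away from $0$ (so $\chi(\sqrt{\cdot})$ is smooth and compactly supported) to legitimize the functional calculus for the square root; and note that termwise vanishing of the composition expansion must be supplemented by the standard remainder estimates of the calculus, together with the rapid decay in $\eta$ inherited from $\chi$, to conclude the uniform $O(\lambda^{-\infty})$ kernel bound. The paper's route has the advantage of reusing machinery already needed for Proposition \ref{intermediate prop}; yours is shorter if one is willing to quote the semiclassical functional calculus.
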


We will also use the following generalization of the bound \eqref{general bound} to help us contend with \eqref{reduction 3}, whose proof we also defer until the end of the section.

\begin{proposition} \label{intermediate prop}
	Let $B_\lambda$ be as above. Then,
	\[
		\sum_{\lambda_j \in [\lambda, \lambda+1]} \left| \int_\Sigma B_\lambda e_j \, d\sigma \right|^2 = O(\lambda^{n-d-1}).
	\]
\end{proposition}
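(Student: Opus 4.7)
The plan is to adapt Zelditch's Kuznecov-type proof \cite{ZelK} of \eqref{zelditch}, inserting the extra pseudodifferential factor $B_\lambda$. First I would fix a nonnegative Schwartz function $\chi$ with $\chi \ge 1$ on $[0,1]$ and with $\hat\chi$ supported in a small interval $(-\epsilon_0,\epsilon_0)$, chosen less than both the injectivity radius of $(M,g)$ and the radius of the support of $\beta_0$ in \eqref{B symbol}. Spectral majorization and Parseval then give
\[
\sum_{\lambda_j\in[\lambda,\lambda+1]}\Bigl|\int_\Sigma B_\lambda e_j\,d\sigma\Bigr|^2 \le \sum_j\chi(\lambda_j-\lambda)^2\Bigl|\int_\Sigma B_\lambda e_j\,d\sigma\Bigr|^2 = \iint_{\Sigma\times\Sigma}K_\lambda(x,y)\,d\sigma(x)\,d\sigma(y),
\]
where $K_\lambda$ is the Schwartz kernel of $B_\lambda\,\chi(\sqrt{-\Delta_g}-\lambda)^2\,B_\lambda^*$.

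Next, using Fourier inversion $\chi(\sqrt{-\Delta_g}-\lambda)^2 = \tfrac{1}{2\pi}\int_{\R}\widehat{\chi^2}(t)\,e^{it\lambda}\,e^{-it\sqrt{-\Delta_g}}\,dt$ together with the Hadamard parametrix for $e^{-it\sqrt{-\Delta_g}}$ on the support of $\widehat{\chi^2}$, I would write $K_\lambda$ as an oscillatory integral. Composition with the two pseudodifferential symbols from $B_\lambda$ and $B_\lambda^*$ is routine since each is classical of order zero with symbol supported where $|\xi|\sim\lambda$. After the change of variable $\xi = \lambda\eta$ and stationary phase in $t$ and in the radial direction of $\eta$ (which localizes to $|\eta|_{g(x)}=1$), $K_\lambda$ reduces to a Fourier integral of magnitude $O(\lambda^{n-1})$ at the diagonal and decaying off of it like $\lambda^{(n-1)/2} d_g(x,y)^{-(n-1)/2}$, by further stationary phase in the angular variable $\omega\in S^{n-1}$.

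Finally, I would split the double integral over $\Sigma\times\Sigma$ at $d_g(x,y) = \lambda^{-1}$. On the near-diagonal region the kernel is $O(\lambda^{n-1})$ over a domain of surface measure $O(\lambda^{-d})$, contributing $O(\lambda^{n-d-1})$. On the off-diagonal region, further stationary phase in the $\omega$-directions nontangential to $\Sigma$ yields the same $O(\lambda^{n-d-1})$ bound---this is precisely the computation carried out in \cite{ZelK}. The main technical obstacle is to verify that the microlocal cutoffs in $B_\lambda$---the angular $a_i(\xi/|\xi|)$, the transverse $\beta_0(|x^\perp|)$, the tangential $b(x')$, and the radial $\beta_1(|\xi|/\lambda)$---pass cleanly through the symbol calculus and the stationary-phase analysis. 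Since each is a smooth, $\lambda$-independent bump function, it only restricts the effective domain of integration and does not alter the asymptotic order.
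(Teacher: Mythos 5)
Your plan is sound and reaches the right bound, but it organizes the final estimate differently from the paper. The paper never splits $\Sigma\times\Sigma$ into near- and off-diagonal regions: after the same $\chi$-majorization, Fourier inversion, and wave parametrix, it composes with the symbols of $B_\lambda$, $B_\lambda^*$ by a first stationary phase in the composition variables $(w,z,\eta,\zeta)$, then writes $\xi=\xi'+r\omega$ in cylindrical coordinates adapted to the Fermi coordinates and performs one clean stationary phase in the $2d+2$ variables $(t,r,x',\xi')$ (with $y'$ and $\omega\in S^{n-d-1}$ as spectators), which directly yields $O(\lambda^{n-d-1})$ with no kernel bounds or dyadic considerations. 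Your route instead extracts the pointwise bound $|K_\lambda(x,y)|\lesssim\lambda^{n-1}(1+\lambda d_g(x,y))^{-(n-1)/2}$ for the microlocalized projector and then runs the Zelditch-type surface-integral estimate; this is legitimate, and the insertion of the $\lambda$-scaled order-zero cutoffs is indeed harmless, but be aware that in the off-diagonal region the magnitude bound alone is insufficient once $d>(n-1)/2$ (for hypersurfaces it only gives $\lambda^{(n-1)/2}$ against a target of $O(1)$), so you must genuinely use the oscillatory factor $e^{\pm i\lambda d_g(x,y)}$ there: since nearby points of $\Sigma$ are joined by nearly tangential geodesics, the phase has no critical points in the tangential variables off the diagonal, and integration by parts (or, equivalently, the joint stationary phase over $\Sigma\times\Sigma\times S^{n-1}$ whose critical set is the conormal directions over the diagonal, which is the actual computation in Zelditch) gives the decay. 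Your phrase ``stationary phase in the $\omega$-directions nontangential to $\Sigma$'' conflates these two steps; the argument you are invoking does exist in \cite{ZelK}, so this is an imprecision rather than a gap, but it is the one place where the proof still has real work to do. The trade-off: your route reuses classical kernel estimates and is closer to the original Kuznecov literature, while the paper's two-stage stationary phase keeps every constant visibly uniform in the cutoffs, which is convenient since the same reduction is reused for the remainder term in Proposition \ref{remainder prop}.
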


Let $\chi$ be some nonnegative Schwartz-class function with $\chi(0) = 1$ and $\supp \hat \chi \subset [-1,1]$. Since we can fit some rectangle under the graph of $\chi$, we have \eqref{reduction 3} provided
\begin{equation}\label{reduction 3'}
	\sum_j \chi(T(\lambda_j - \lambda)) \left| \int_\Sigma B_\lambda e_j \, d\sigma \right|^2 \lesssim T^{-1} \lambda^{n-d-1} + e^{CT} \lambda^\delta.
\end{equation}
To access \eqref{reduction 3'}, we will make use of the spectrally-defined half-wave operator,
\[
	e^{it\lap g} = \sum_j e^{it\lambda_j} E_j
\]
where $E_j$ is the orthogonal projection operator onto the $e_j$-th eigenspace. The half-wave operator has kernel
\[
	e^{it\lap g}(x,y) = \sum_j e^{it\lambda_j} e_j(x) \overline{e_j(y)}
\]
and so the kernel of the composition $B_\lambda e^{it\lap g} B_\lambda^*$ is
\[
	B_\lambda e^{it\lap g} B_\lambda^*(x,y) = \sum_j e^{it\lambda_j} B_\lambda e_j(x) \overline{B_\lambda e_j(y)},
\]
where here $B_\lambda^*$ denotes the adjoint of $B_\lambda$.
We use the Fourier inversion formula and the expression above to write the left hand side of \eqref{reduction 3'} as
\begin{align}
	\nonumber \frac{1}{2\pi T} \sum_j &\int_{-\infty}^\infty \hat \chi(t/T) e^{-it\lambda} e^{it\lambda_j} \int_\Sigma \int_\Sigma B_\lambda e_j(x) \overline{B_\lambda e_j(y)} \, d\sigma(x) \, d\sigma(y) \, dt \\
	\label{reduction 3.5} &=\frac{1}{2\pi T} \int_\Sigma \int_\Sigma \int_{-\infty}^\infty \hat \chi(t/T) e^{-it\lambda} B_\lambda e^{it\lap \tg} B_\lambda^*(x,y) \, dt \, d\sigma(x) \, d\sigma(y).
\end{align}

[LEFT OFF HERE]

Let $\beta$ be a smooth bump function on $\R$ such that $\beta(t) = 1$ for $|t| \leq 2$ and $\beta(t) = 0$ for $|t| \geq 3$. At this point we introduce a constant $R$ to be determined later, independent of $T$ and $\lambda$, and dependent only on the geometry of $M$ and $\Sigma$. We cut the integral \eqref{reduction 3.5} into $\beta(t/R)$ and $1 - \beta(t/R)$ parts and obtain
\begin{align}
	\label{reduction beta cut} &= \frac{1}{2\pi T} \int_\Sigma \int_\Sigma \int_{-\infty}^\infty \beta(t/R) \hat \chi(t/T) e^{-it\lambda} B_\lambda e^{it\lap \tg} B_\lambda^*(x,y) \, dt \, d\sigma(x) \, d\sigma(y) \\
	\nonumber &\qquad + \frac{1}{2\pi T} \int_\Sigma \int_\Sigma \int_{-\infty}^\infty (1 - \beta(t/R)) \hat \chi(t/T) e^{-it\lambda} B_\lambda e^{it\lap \tg} B_\lambda^*(x,y) \, dt \, d\sigma(x) \, d\sigma(y).
\end{align}
We let $X_T$ denote the function with Fourier transform $\hat X_T(t) = \beta(t/R) \chi(t/T)$. By reversing our argument, we write the first term in \eqref{reduction beta cut} as
\[
	\frac{1}{T} \sum_j X_T(\lambda_j - \lambda) \left| \int_\Sigma B_\lambda e_j \, d\sigma \right|^2,
\]
which is bounded by a constant multiple $T^{-1}\lambda^{n-d-1}$ by Proposition \ref{intermediate prop} and the fact that
\[
	|X_T(\lambda_j - \lambda)| \leq C_N (1 + |\lambda_j - \lambda|)^{-N} \qquad \text{ for } N = 0,1,2,\ldots
\]
for constants $C_N$ uniform for $T \geq 1$.
Hence, we are done if we can show that
\begin{align}
	\nonumber \left| T^{-1} \int_\Sigma \int_\Sigma \int_{-\infty}^\infty (1 - \beta(t/R)) \hat \chi(t/T) e^{-it\lambda} B_\lambda e^{it\lap \tg} B_\lambda^*(x,y) \, dt \, d\sigma(x) \, d\sigma(y) \right|& \\
	\label{reduction global bound} &\hspace{-4em} \lesssim e^{CT} \lambda^\delta.
\end{align}

As in ~\cite{Berard,CSPer,Gauss}, we will want to replace the half wave operator of \eqref{reduction global bound} with the cosine operator so that we have H\"ugen's principle at our disposal when we lift to the universal cover. By Euler's formula,
\[
	e^{it\lap g} = 2\cos(t\lap g) - e^{-it\lap g},
\]
hence we write what is inside the absolute values in \eqref{reduction global bound} as
\begin{align*}
	\frac{2}{T} & \int_\Sigma \int_\Sigma \int_{-\infty}^\infty (1 - \beta(t/R)) \hat \chi(t/T) e^{-it\lambda} B_\lambda \cos(t\lap g) B_\lambda^*(x,y) \, dt \, d\sigma(x) \, d\sigma(y) \\
	&+ \frac{1}{T} \int_\Sigma \int_\Sigma \int_{-\infty}^\infty (1 - \beta(t/R)) \hat \chi(t/T) e^{-it\lambda} B_\lambda e^{-it\lap g} B_\lambda^*(x,y) \, dt \, d\sigma(x) \, d\sigma(y).
\end{align*}
Setting $\hat X_T(t) = \beta(t/R)\hat \chi(t/T)$ as before and reversing our reduction, the latter term is a constant multiple of
\[
	\sum_j \left(\chi(-T(\lambda_j + \lambda)) - \frac{1}{T} X_T(-(\lambda_j + \lambda)) \right) \left| \int_\Sigma B_\lambda e_j \, d\sigma \right|^2
\]
which vanishes rapidly in $\lambda$ for $T \geq 1$ by Proposition \ref{intermediate prop} and
\[
	|X_T(\tau)| \leq C_N (1 + |\tau|)^{-N} \qquad \text{ for } T \geq 1, \ N = 1,2,\ldots, 
\]
Hence, it suffices to show
\begin{align} \label{reduction 4}
	\nonumber \left| \int_\Sigma \int_\Sigma \int_{-\infty}^\infty (1 - \beta(t/R)) \hat \chi(t/T) e^{-it\lambda} B_\lambda \cos(t\lap g) B_\lambda^*(x,y) \, dt \, d\sigma(x) \, d\sigma(y) \right|&\\
	&\hspace{-4em} \lesssim e^{CT} \lambda^\delta.
\end{align}

We are ready to perform our lift. By the Cartan-Hadamard theorem, we identify the universal cover $\tilde M$ of $M$ with $\R^n$ equipped with the pullback $\tg$ of the metric $g$ through the covering map. Let $\Gamma$ denote the group of deck transformations associated with the covering map and let
\[
	D = \left\{ \tilde x \in \tilde M : d_\tg(\tilde x,0) = \inf_{\alpha \in \Gamma} d_\tg(\alpha \tilde x, 0) \right\}
\]
denote a Dirichlet domain in $\tilde M$ with $0$ chosen to be a lift of a point on $\Sigma$ in the support of $B_\lambda$. Let $\tilde f$ be a smooth, compactly supported function on $\tilde M$ and set
\[
	f(x) = \sum_{\alpha \in \Gamma} \tilde f(\alpha \tilde x)
\]
where $\tilde x$ is any lift of $x$ to $\tilde M$. Since the covering map is a local isometry,
\[
	u(t,x) = \sum_{\alpha \in \Gamma} \cos(t \lap \tg)\tilde f(\alpha \tilde x)
\]
solves the wave equation $(\partial_t^2 - \Delta_g)u = 0$ with initial data $u(0,x) = f(x)$ and $\partial_t u(0,x) = 0$, hence 
\[
	u(t,x) = \cos(t\lap g)f(x).
\]
We conclude
\begin{equation} \label{cosine lift}
	\cos(t\lap g) = \sum_{\alpha \in \Gamma} \alpha^* \cos(t \lap \tg)
\end{equation}
where $\alpha^*$ is the pullback operator through $\alpha$, e.g. $\alpha^* \tilde f(\tilde x) = \tilde f(\alpha \tilde x)$. Hence we will have \eqref{reduction 4} provided
\begin{equation} \label{reduction final}
	\sum_{\alpha \in \Gamma} \left| \int_{\Sigma} \int_{\Sigma} K_\alpha(T,\lambda; x, y) \, d\sigma(x) \, d\sigma(y) \right| \lesssim e^{CT} \lambda^\delta
\end{equation}
where
\begin{equation} \label{conjugated kernel def}
	K_{\alpha}(T,\lambda; x, y) = \int_{-\infty}^\infty (1 - \beta(t/R)) \hat \chi(t/T) e^{-it\lambda} \tilde B_\lambda \alpha^* \cos(t\lap \tg) \tilde B_\lambda^*(\tilde x, \tilde y) \, dt,
\end{equation}
where $\tilde B_\lambda$ is the operator on $\tilde M$ associated with the symbol
\begin{equation} \label{tilde B symbol}
	\tilde B_\lambda(\tilde x, \tilde y, \xi) = \begin{cases}
		B_\lambda(x,y,\xi) & \text{ if } \tilde x, \tilde y \in D, \\
		0 & \text{ otherwise},
	\end{cases}
\end{equation}
and where $\tilde x$ and $\tilde y$ are the respective lifts of $x$ and $y$ to the Dirichlet domain $D$ in the universal cover. We note now for future reference that, by H\"uygen's principle, $K_\alpha(T,\lambda;x,y)$ is supported on $d_\tg(\tilde x, \tilde y) \leq T+1$, after perhaps shrinking the $\tilde x$-support of the symbol $\tilde B_\lambda$. Hence, all except for a finite number of terms in the sum in \eqref{reduction final} is zero. In fact, by volume comparison
\begin{equation}\label{finite sum}
	\#\{\alpha \in \Gamma : \supp K_\alpha(T,\lambda; \ \cdot \ , \ \cdot \ ) \text{ is nonempty} \} = O(e^{CT}).
\end{equation}

This concludes our reduction, but we still need to prove Propositions \ref{remainder prop} and \ref{intermediate prop}. The proof of Proposition \ref{intermediate prop} is very standard but a bit involved, requiring a parametrix of the half wave operator and two consecutive applications of stationary phase. We refer the reader to ~\cite{SZDuke,STZ,Hang,emmett3} for similar arguments.

\begin{proof}[Proof of Proposition \ref{intermediate prop}.] Let $\chi$ be as before, i.e. a nonnegative Schwartz-class function on $\R$ with $\chi(0) = 1$, but now with $\hat \chi$ having adjustably small support. It suffices to show
\begin{equation}\label{intermediate prop 1}
	\sum_j \chi(\lambda_j - \lambda) \left| \int_\Sigma B_\lambda e_j \, d\sigma \right|^2 \lesssim \lambda^{n-d-1}.
\end{equation}
Following the steps in the reduction above, we write \eqref{intermediate prop 1} as
\[
	\left| \int_\Sigma \int_\Sigma \int_{-\infty}^\infty \hat \chi(t) e^{-it\lambda} B_\lambda e^{it\lap g} B_\lambda^*(x,y) \, dt \, d\sigma(x) \, d\sigma(y) \right| \lesssim \lambda^{n-d-1}.
\]
By using H\"ormander's parametrix ~\cite[Chapter 4]{SFIO} or by using the Hadamard parametrix and the arguments in section 5.2.2 of ~\cite{Hang}, we write
\begin{equation} \label{hormander parametrix}
	e^{it\lap g}(x,y) = \int_{\R^n} e^{i(\varphi(x,y,\xi) + tp(y,\xi))} q(t,x,y,\xi) \, d\xi
\end{equation}
modulo a smooth kernel where $q$ is a zero-order symbol in $\xi$ satisfying
\[
	|\partial_\xi^\alpha \partial_{t,x,y}^\beta q(t,x,y,\xi)| \leq C_{\alpha,\beta}(1 + |\xi|)^{-|\alpha|}
\]
for multiindices $\alpha$ and $\beta$, and where since the support of $\hat \chi$ is small, $\hat \chi(t)q(t,x,y,\xi)$ is supported where $d_\tg(x,y)$ is near $0$. After perhaps further restricting the support of $\hat \chi$, the phase function $\varphi$ is defined on the support of $\hat \chi q$, is smooth and homogeneous of degree $1$ in $\xi$, and satisfies
\[
	\varphi(x,y,\xi) = \langle x - y, \xi \rangle + O(|x - y|^2 |\xi|)
\]
where here $x$ and $y$ are written in Fermi coordinates \eqref{fermi coordinates}. Finally,
\[
	p(y,\xi) = \sqrt{\sum_{i,j} g^{ij}(y) \xi_i \xi_j}
\]
is the principal symbol associated with the half-Laplacian $\lap g$.
For $x$ and $y$ in Fermi coordinates,
\begin{align*}
	&\int_{-\infty}^\infty \hat \chi(t) e^{-it\lambda} e^{it\lap g}(x,y) \, dt \\
		&= \int_{\R^n} \int_{-\infty}^\infty \hat \chi(t) q(t,x,y,\xi) e^{i(\varphi(x,y,\xi) + t(p(y,\xi) -\lambda))} \, dt \, d\xi\\
		&= \lambda^n \int_{\R^n} \int_{-\infty}^\infty \hat \chi(t) q(t,x,y,\lambda \xi) e^{i\lambda (\varphi(x,y,\xi) + t(p(y,\xi) - 1))} \, dt \, d\xi\\
		&= \lambda^n \int_{\R^n} \int_{-\infty}^\infty \hat \chi(t) q(t,x,y,\lambda \xi) \beta_1(p(y,\xi)) e^{i\lambda (\varphi(x,y,\xi) + t(p(y,\xi) - 1))} \, dt \, d\xi\\
		&\hspace{28em}+ O(\lambda^{-\infty})
\end{align*}
where $\beta_1$ is as before, that is with small support and with $\beta_1 \equiv 1$ near $1$. The $O(\lambda^{-\infty})$ bound on the discrepancy is uniform in $x$ and $y$, and is obtained by integration by parts in $t$. Hence,
\begin{align*}
	&\int_\Sigma \int_\Sigma \int_{-\infty}^\infty \hat \chi(t) e^{-it\lambda} B_\lambda e^{it\lap \tg} B_\lambda^*(x,y) \, dt \, d\sigma(x) \, d\sigma(y) \\
	&= \lambda^n \idotsint e^{i\langle x' - w, \eta \rangle} B_\lambda(x',w,\eta) e^{i(\varphi(w,z,\xi) + t(p(z,\xi) - 1))} \hat \chi(t) q(t,w,z,\xi) \\
	&\hspace{9em} \beta_1(|\xi|) e^{i\langle z - y', \zeta \rangle} \overline{B_\lambda(y',z,\zeta)} \, dt \, dx' \, dy' \, dw \, dz \, d\eta \, d\zeta \, d\xi + O(\lambda^{-\infty}).
\end{align*}
We perform the change of variables $\eta \mapsto \lambda \eta$ and $\zeta \mapsto \lambda \zeta$, and write $\xi = \xi' + r\omega$ in cylindrical coordinates with $r \in (0,\infty)$ and $\omega \in S^{n-d-1}$.  The integral on the right hand side is then
\begin{align}
	\nonumber = \lambda^{3n} \idotsint e^{i\lambda \Phi(t,x',y',\xi',r,\omega,w,z,\eta,\zeta)} a(\lambda; t,x',y',\xi',r, \omega ,w,z,\eta,\zeta) & \\
	\label{local prop integral} &\hspace{-8em} \, dt \, dx' \, dy' \, d\xi' \, dr \, d\omega \, dw \, dz \, d\eta \, d\zeta
\end{align}
where $d\omega$ denotes the standard volume measure on $S^{n-d-1}$,
\begin{align*}
	&\Phi(\lambda; t,x',y',\xi',r,\omega,w,z,\eta,\zeta)\\
	&\hspace{4em}= \langle x' - w, \eta \rangle + \varphi(w,z,\xi'+r\omega) + t(p(z,\xi' + r\omega) - 1) + \langle z - y' , \zeta \rangle,
\end{align*}
and
\begin{align*}
	&a(\lambda; t,x',y',\xi',r,\omega,w,z,\eta,\zeta)\\
	&= \hat \chi(t) b(x') \overline{b(y')} \beta_0(|x' - w|) \beta_0(|y' - z|)q(t,w,z,\lambda (\xi' + r\omega)) \\
	&\hspace{10em} \beta_1(|\eta|) \beta_1(|\zeta|) \beta_1(p(z,\xi' + r\omega)) a(\eta/|\eta|) a(\zeta/|\zeta|) r^{n-d-1}.
\end{align*}
Note all derivatives of $a$ are uniformly bounded for $\lambda \geq 1$.

We will use the method of stationary phase in variables $t$, $x'$, $\xi'$, $r$, $w$, $z$, $\eta$, and $\zeta$. Instead of doing so all at once with eight variables, we break it into two stages -- the first involving $w$, $z$, $\eta$, and $\zeta$, and the second involving the remaining four. We begin by fixing $x'$, $y'$, and $\xi$ and by performing stationary phase with respect to $w$, $z$, $\eta$, and $\zeta$. The gradient of the phase function in these variables is
\[
	\nabla_{w,z,\eta,\zeta} \Phi = \begin{bmatrix}
		-\eta + \xi + O(|w-z||\xi|) \\
		\zeta - \xi + O(|w-z||\xi|) \\
		x' - w \\
		y' - z
	\end{bmatrix}
\]
which, when $x' = y'$, has a critical point at $w = z = y'$ and $\eta = \zeta = \xi$. The Hessian matrix at this point is
\[
	\nabla^2_{w,z,\eta,\zeta} \Phi = \begin{bmatrix}
		* & * & -I & 0 \\
		* & * & 0 & I \\
		-I & 0 & 0 & 0 \\
		0 & I & 0 & 0
	\end{bmatrix}
\]
which has determinant $-1$ and signature $0$. By ~\cite[Corollary 1.1.8]{SFIO} and after perhaps restricting the support of $a$, the integral \eqref{local prop integral} is
\begin{align} \label{local prop integral 2}
	= \lambda^n \idotsint e^{i\lambda \Psi(t,x',y',\xi',r,\omega)} a(\lambda; t, x',y', \xi', r,\omega) \, dt \, dx' \, dy' \, d\xi' \, dr
\end{align}
with phase
\[
	\Psi(t,x',y',r,\omega) = \varphi(x',y',\xi) + t(p(y',\xi) - 1)
\]
and where the amplitude has compact support and has uniformly bounded derivatives in all variables for $\lambda \geq 1$. Next we fix $y'$ and $\omega$ and perform stationary phase in the remaining variables $t,r,x',$ and $\xi'$. We have
\[
	\nabla_{t,r,x',\xi'} \Psi = \begin{bmatrix}
		p(y',\xi) - 1 \\
		t\partial_r p(y,\xi) + O(|x' - y'|^2|\xi|) \\
		\xi' + O(|x' - y'||\xi|)\\
		x' - y' + t \nabla_{\xi'} p(y',\xi) + O(|x'-y'|^2)
	\end{bmatrix}
\]
which has a critical point at $(t,r,x',\xi') = (0,1,y',0)$ whereat we have the Hessian
\[
	\nabla^2_{t,r,x',\xi'} \Psi = \begin{bmatrix}
		0 & 1 & 0 & 0 \\
		1 & 0 & 0 & 0 \\
		0 & 0 & * & I \\
		0 & 0 & I & 0
	\end{bmatrix}
\]
where in the computations we use
\[
	p(y',\xi) = \sqrt{r^2 + \sum_{i,j = 0}^d g_\Sigma^{ij}(y')\xi_i' \xi_j'},
\]
a consequence of the construction of our Fermi coordinates \eqref{fermi local metric}. By using stationary phase ~\cite[Corollary 1.1.8]{SFIO} in $2d + 2$ variables, \eqref{local prop integral 2} is $O(\lambda^{n-d-1})$, as desired.
\end{proof}

\begin{proof}[Proof of Proposition \ref{remainder prop}.]
Let $\chi$ be as in the proof of Proposition \ref{intermediate prop}. It suffices to show
\[
	\sum_j \chi(\lambda_j - \lambda) |R_\lambda e_j(x')|^2 \leq C_N \lambda^{-N} \qquad N = 1,2,\ldots
\]
uniformly for $x \in \Sigma$. Using a similar reduction as before, the sum on the left is
\begin{align*}
	&\frac{1}{2\pi} \int_{-\infty}^\infty \hat \chi(t) e^{-it\lambda} R_\lambda e^{it\lap g} R^*_\lambda(x',x') \, dt.
\end{align*}
Using the argument in the proof of Proposition \ref{intermediate prop}, the expression above is
\begin{align*}
	= \lambda^{3n} \idotsint e^{i\lambda \Phi(t,x',w,z,\eta,\zeta,\xi)} a(\lambda; t,x',w,z,\eta,\zeta,\xi) \, dt \, dw \, dz \, d\eta \, d\zeta \, d\xi
\end{align*}
where
\[
	\Phi(t,x',w,z,\eta,\zeta,\xi) = \langle x' - w, \eta \rangle + \varphi(w,z,\xi) + t(p(z,\xi) - 1) + \langle z - x', \zeta \rangle
\]
and
\begin{align*}
	&a(\lambda; t,x',w,z,\eta,\zeta,\xi) = \hat \chi(t) |b(x')|^2 \beta_0(|x' - w|)\beta_0(|x' - z|) q(t,w,z,\lambda \xi)\\
	&\hspace{18em} (1 - \beta_1(|\eta|)) (1 - \beta_1(|\zeta|))\beta_1(p(z,\xi)).
\end{align*}
As before, the critical points of $\Phi$ occur only where $\eta = \zeta = \xi$. By the construction of our coordinates,
\[
	p(x',\xi) = (1 + O(|x'|^2))|\xi|
\]
and so we may adjust the support of $b$ so that $(1 - \beta_1(|\xi|))\beta_1(p(x',\xi)) \equiv 0$. Hence, the critical points of $\Phi$ lie outside the support of the amplitude and the desired bound follows from nonstationary phase ~\cite[Lemma 0.4.7]{SFIO}.
\end{proof}


\section{Kernel Bounds}

We require a characterization of the kernels $K_{\alpha}$ defined in \eqref{conjugated kernel def} to proceed. Note first that if $x$ and $y$ are expressed in our Fermi coordinates \eqref{fermi coordinates} about $\Sigma$, 
\begin{align}
	\label{kernel alpha def} &K_{\alpha}(T,\lambda;x,y)\\
	\nonumber &= \frac{1}{(2\pi)^{2n}} \iiiint e^{i\langle x - w, \eta \rangle} B_\lambda(x, w,\eta) K(T,\lambda;\alpha \tilde w,\tilde z) e^{i\langle z - y, \zeta \rangle} \overline{B_\lambda(y,z,\zeta)}\\
	\nonumber & \hspace{26em}\, dw \, dz \, d\eta \, d\zeta
\end{align}
where $\tilde w$ and $\tilde z$ are the respective lifts of $w$ and $z$ to the Dirichlet domain $D$ and
\begin{equation}\label{kernel def}
	K(T,\lambda; \tilde x, \tilde y) = \int (1 - \beta(t/R)) \hat \chi(t/T) e^{-it\lambda} \cos(t\lap \tg)(\tilde x,\tilde y) \, dt.
\end{equation}
We begin by developing a characterization of the kernel $K(T,\lambda;\tilde x,\tilde y)$ for $\tilde x, \tilde y \in \tilde M$ with $d_\tg(\tilde x,\tilde y)$ bounded away from zero as in ~\cite{Berard, CSPer, Gauss}. In what follows, we draw liberally from Sogge's text, \emph{Hangzhou Lectures on Eigenfunctions of the Laplacian} ~\cite{Hang}, for its arguments and notation, and also B\'erard's article ~\cite{Berard} for asymptotic bounds on derivatives of the distance function and the coefficients of the Hadamard parametrix.

\begin{lemma} \label{kernel lemma}
	Fix a positive integer $m$. There exist functions $a_\pm(T,\lambda;\tilde x,\tilde y)$ and $R(T,\lambda; \tilde x, \tilde y)$ depending on $m$ such that
	\[
		K(T,\lambda; \tilde x, \tilde y) = \lambda^\frac{n-1}{2} \sum_\pm a_\pm(T,\lambda;\tilde x,\tilde y) e^{\pm i \lambda d_\tg(\tilde x,\tilde y)} + R(T,\lambda;\tilde x,\tilde y)
	\]
	where if $d_\tg(\tilde x,\tilde y) \geq 1$,
	\begin{equation}\label{kernel a bounds}
		|\Delta_x^j \Delta_y^k a_\pm(T,\lambda;\tilde x,\tilde y)| \leq C_{j,k} e^{C_{j,k} d_\tg(\tilde x,\tilde y)} \qquad j,k = 0,1,2,\ldots
	\end{equation}
	and
	\begin{equation}\label{kernel R bounds}
		|R(T,\lambda;\tilde x,\tilde y)| \lesssim e^{CT}\lambda^{-m}.
	\end{equation}
	Moreover if $d_\tg(\tilde x,\tilde y) \leq R$,
	\begin{equation} \label{kernel lemma local}
		|K(T,\lambda;\tilde x,\tilde y)| \lesssim e^{C T} \lambda^{-m}.
	\end{equation}
\end{lemma}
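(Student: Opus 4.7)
My plan is to substitute a Hadamard parametrix for $\cos(t\lap\tg)(\tilde x,\tilde y)$ into the definition \eqref{kernel def} of $K$ and then carry out the $t$-integration explicitly. Because $\tilde M$ has no conjugate points by Cartan--Hadamard, the parametrix is valid globally: following B\'erard's construction (see \cite{Berard,Hang}), one has, modulo smoother terms,
\[
\cos(t\lap\tg)(\tilde x,\tilde y) = \sum_{\nu=0}^{N} w_\nu(\tilde x,\tilde y)\, E_\nu(t,r) + S_N(t,\tilde x,\tilde y), \qquad r = d_\tg(\tilde x,\tilde y),
\]
where the Hadamard transport coefficients $w_\nu$ satisfy B\'erard's exponential bounds $|\nabla^k w_\nu(\tilde x,\tilde y)| \leq C_{k,\nu} e^{C_{k,\nu} r}$, the $E_\nu(t,r)$ are the universal spacetime Riesz kernels supported on $\{|t|\geq r\}$, and $S_N$ is a smoothing error of increasingly high regularity as $N$ grows, with comparable exponential bounds on its spatial derivatives.

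\textbf{Main steps.} Plugging the parametrix into \eqref{kernel def}, the key computation is
\[
\int (1-\beta(t/R))\hat\chi(t/T) e^{-it\lambda} E_\nu(t,r)\,dt = \lambda^{\frac{n-1}{2}-\nu} \sum_\pm b_{\pm,\nu}(T,\lambda;r) e^{\pm i\lambda r} + O(\lambda^{-\infty}),
\]
valid for $r \geq 1$, obtained from the known Fourier transform of $E_\nu$ in $t$ (a Bessel/Hankel-type evaluation) followed by stationary phase in the dual variable near the wavefront $|t|=r$. The cutoff $(1-\beta(t/R))$ contributes only to the $O(\lambda^{-\infty})$ term by nonstationary phase in $t$ on the compact region $|t|\leq 3R$, and the functions $b_{\pm,\nu}$ together with all their $r$-derivatives are uniformly bounded for $T,\lambda \geq 1$. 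Setting
\[
a_\pm(T,\lambda;\tilde x,\tilde y) = \sum_{\nu=0}^N \lambda^{-\nu}\, w_\nu(\tilde x,\tilde y)\, b_{\pm,\nu}(T,\lambda;r)
\]
and collecting all remaining contributions into $R(T,\lambda;\tilde x,\tilde y)$, the decomposition claimed in the lemma follows. The bound \eqref{kernel a bounds} then reduces, by expanding the iterated Laplacians $\Delta_{\tilde x}^j\Delta_{\tilde y}^k$ in terms of covariant derivatives, to the pairing of B\'erard's $e^{Cr}$ bounds on derivatives of $w_\nu$ with the $L^\infty$ bounds on the $b_{\pm,\nu}$ and B\'erard's analogous bounds on derivatives of $r = d_\tg(\tilde x,\tilde y)$ in the universal cover.

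\textbf{Remainder, local bound, and main obstacle.} The remainder \eqref{kernel R bounds} comes from the smoothing error $S_N$: its contribution to $K$ is bounded by $\|\hat\chi(t/T)\|_{L^1}\cdot \sup|S_N| \lesssim T\cdot e^{Cr}\lambda^{-m}$ upon choosing $N=N(m)$ large, which by Huygens' principle (since we ultimately apply the lemma only on $r \lesssim T+1$, cf.~the discussion following \eqref{finite sum}) is absorbed into $e^{CT}\lambda^{-m}$. For the local bound \eqref{kernel lemma local} in the regime $r \leq R$, the cutoff $(1-\beta(t/R))$ forces $|t|\geq 2R > 2r$, placing the integration region strictly outside the wavefront of $\cos(t\lap\tg)(\tilde x,\tilde y)$; that kernel is therefore smooth in $t$ on the effective domain, and repeated integration by parts in $t$ against $e^{-it\lambda}$, using the parametrix to bound the $t$-derivatives, yields $\lambda^{-m}$ for any $m$, with the $O(T)$ length of the $t$-support producing the $e^{CT}$ factor. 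The main technical obstacle throughout is the careful tracking of the exponential-in-$r$ growth of the Hadamard coefficients and their derivatives --- ultimately governed by B\'erard's Jacobi field estimates on $\tilde M$ --- while preserving uniformity in $T$, $\lambda$, and $r$; a secondary subtlety is to verify that the $(1-\beta(t/R))$ truncation contributes only a smoothing error in $\lambda$, which needs the nonstationary phase argument to go through the parametrix representation without hidden $T$-dependence.
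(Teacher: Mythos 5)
Your overall strategy matches the paper's: insert a global Hadamard parametrix for $\cos(t\lap\tg)(\tilde x,\tilde y)$ into \eqref{kernel def}, carry out the $t$-integral to produce the oscillating factors $e^{\pm i\lambda d_\tg(\tilde x,\tilde y)}$ via an explicit Fourier/stationary-phase computation on the Riesz factors $E_\nu$, identify the amplitudes with $\sum_\nu \lambda^{-\nu}w_\nu\, b_{\pm,\nu}$, and collect the rest as the remainder. Your treatment of the local bound \eqref{kernel lemma local} by integration by parts in $t$ (using smoothness of the kernel off the wavefront $|t|=r$, valid here since $\tilde M$ has no conjugate points) is a legitimate variant of the paper's nonstationary-phase-in-$\xi$ argument, and both yield the same conclusion.

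There is, however, a genuine gap in the step you give for \eqref{kernel a bounds}. The amplitude $a_\pm = \sum_\nu \lambda^{-\nu}\, w_\nu(\tilde x,\tilde y)\, b_{\pm,\nu}(T,\lambda;r)$ is built from the Hadamard coefficients $w_\nu$ and from $r=d_\tg(\tilde x,\tilde y)$, and the bound required is on the \emph{mixed} iterated Laplacians $\Delta_{\tilde x}^j\Delta_{\tilde y}^k$. B\'erard's estimates, which you invoke as ``B\'erard's $e^{Cr}$ bounds on derivatives of $w_\nu$,'' provide exponential bounds only on the pure derivatives $\Delta_{\tilde x}^j w_\nu$ and (by self-adjointness of $\cos(t\lap\tg)$) $\Delta_{\tilde y}^k w_\nu$, and likewise only on pure derivatives of $d_\tg$. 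They do not by themselves control $\Delta_{\tilde x}^j\Delta_{\tilde y}^k w_\nu$ or $\Delta_{\tilde x}^j\Delta_{\tilde y}^k d_\tg$, and expanding the iterated Laplacians into covariant derivatives does not get around this. The paper closes exactly this hole with Proposition \ref{appendix prop}, a Sobolev-embedding argument on the compact product $M\times M$ that upgrades separate exponential bounds in each variable to exponential bounds on mixed derivatives; without that (or some substitute), \eqref{kernel a bounds} is not established. A secondary slip: for the remainder you write that its contribution is bounded by $\|\hat\chi(\cdot/T)\|_{L^1}\cdot\sup|S_N|\lesssim T\, e^{Cr}\lambda^{-m}$ ``upon choosing $N$ large,'' but $S_N$ itself carries no $\lambda$-decay whatsoever; the factor $\lambda^{-m}$ must come from $m$ integrations by parts in $t$ against $e^{-it\lambda}$, which in turn requires showing $S_N$ is $C^m$ in $t$ with controlled $t$-derivatives, forcing $N$ large for that reason rather than for a size bound.
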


\begin{proof}
By Theorem 2.4.1 and Remark 1.2.5 of ~\cite{Hang},
	\begin{equation} \label{parametrix 1}
	\cos(t\lap \tg)(\tilde x,\tilde y) = \sum_{\nu = 0}^N \alpha_\nu(\tilde x,\tilde y) \partial_t E_\nu(t,d_\tg(\tilde x,\tilde y)) + R_N(t,\tilde x,\tilde y)
\end{equation}
where $\partial_t E_\nu(t,r)$ is some distribution supported on $|t| \leq r$, and if $\tilde x$ is expressed in geodesic normal coordinates about $\tilde y$ with metric $\tg$, the coefficients $\alpha_\nu$ are defined inductively by
\[
	\alpha_0(\tilde x,\tilde y) = |\tg(\tilde x)|^{-1/4}
\]
and
\begin{equation} \label{alpha induction}
	\alpha_\nu(\tilde x,\tilde y) = \alpha_0(\tilde x,\tilde y) \int_0^1 t^{\nu-1} \frac{\Delta_\tg \alpha_{\nu - 1}(t\tilde x,\tilde y)}{\alpha_0(t\tilde x,\tilde y)} \, dt, \qquad \nu = 1,2,3,\ldots
\end{equation}
where here $\Delta_\tg$ operates in the $\tilde x$ variable. Note $\alpha_\nu$ are defined on all of $\tilde M$ since $|\tg(\tilde x)|$ is nonvanishing.
Finally the remainder term satisfies
\[
	(\partial_t^2 - \Delta_\tg)R_N(t,\tilde x,\tilde y) = \Delta_\tg \alpha_N(\tilde x,\tilde y) \partial_t E_N(t,d_\tg(\tilde x,\tilde y)).
\]
where $\Delta_\tg$ operates in the $\tilde x$ variable. In addition, the appendix of ~\cite{Berard} provides us with exponential bounds,
\[
	|\Delta_{\tilde y}^j \alpha_\nu(\tilde x,\tilde y)| \leq C_j e^{C_j d_\tg(\tilde x,\tilde y)} \qquad j = 0,1,2,\ldots,
\]
which, with the fact that $\cos(t\lap \tg)$ is self-adjoint, provide us with the same bounds on derivatives in $\tilde x$
\[
	|\Delta_{\tilde x}^j \alpha_\nu(x,y)| \leq C_j e^{C_j d_\tg(x,y)} \qquad j = 0,1,2,\ldots
\]
(see ~\cite{Gauss}). Proposition \ref{appendix prop} in the appendix provides us with exponential bounds on the mixed derivatives,
\begin{equation}\label{coefficient bounds}
	|\Delta_{\tilde x}^j \Delta_{\tilde y}^k \alpha_\nu(\tilde x,\tilde y)| \leq C_{j,k} e^{C_{j,k} d_\tg(\tilde x,\tilde y)} \qquad j,k = 0,1,2,\ldots.
\end{equation}
The same proposition and B\'erard's exponential bounds on derivatives of the distance function provide
\begin{equation}\label{distance bounds}
	|\Delta_{\tilde x}^j \Delta_{\tilde y}^k d_\tg(\tilde x,\tilde y)| \leq C_{j,k} e^{C_{j,k} d_\tg(\tilde x,\tilde y)} \qquad j,k = 0,1,2,\ldots.
\end{equation}
From \eqref{coefficient bounds}, \eqref{distance bounds}, an energy estimate argument in ~\cite[\S 3.1]{Hang}, and the fact that $\partial_t E_\nu(t,r)$ is supported on $|t| \leq r$, we have that $R_N$ is $C^m$ and satisfies bounds
\[
	|\partial_t^j R_N(t,\tilde x,\tilde y)| \leq C_j e^{C_j d_\tg(\tilde x,\tilde y)}|t|^{2N+2-n-j} \qquad \text{ for } j = 0,1,\ldots, m
\]
provided $N > m + \frac{n+1}{2}$. Integration by parts $m$ times yields the bound
\begin{equation} \label{kernel remainder 1}
	\left|\int_{\infty}^\infty (1 - \beta(t/R)) \hat \chi(t/T) e^{-it\lambda} R_N(t,\tilde x,\tilde y) \, dt \right| \lesssim e^{C_{N,m}T} \lambda^{-m}
\end{equation}
as desired by \eqref{kernel R bounds}.

In light of \eqref{coefficient bounds} and \eqref{distance bounds}, it suffices to show
\begin{equation}\label{kernel lemma suffice 1}
	\int_{-\infty}^\infty (1 - \beta(t/R)) \hat \chi(t/T) e^{-it\lambda} \partial_t E_\nu(t,r) \, dt = \lambda^\frac{n-1}{2} \sum_\pm a_\pm^\nu(T,\lambda;r) e^{\pm i \lambda r}
\end{equation}
modulo terms whose contributions can be absorbed by the remainder $R(T,\lambda; \tilde x, \tilde y)$,
where $a_\pm^\nu$ satisfy bounds
\begin{equation}\label{kernel lemma suffice 2}
	|\partial_r^\ell a_\pm^\nu(T,\lambda;r)| \leq C_{\nu,\ell} \lambda^{-\nu} P_{\ell,\nu,k,j}(r) \qquad \text{ for } \ell = 0,1,2,\ldots, \ T \geq 1, \ r \geq 1
\end{equation}
where $P_{\ell,\nu,k,j}$ is some polynomial.
By ~\cite[Remark 1.2.5]{Hang}, $\partial_t E_\nu(t,r)$ is a finite linear combination of distributions
\begin{equation}\label{def E nu}
	t^j \int_{|\xi| \geq 1} e^{ir\xi_1 \pm it|\xi|} |\xi|^{-\nu-k} \, d\xi \qquad \text{for } j + k = \nu, \ j,k = 0,1,2,\ldots
\end{equation}
modulo smooth terms whose derivatives grow at most polynomially in $t$ and $r$. The contribution of these discrepancy terms hence satisfy the same bounds as \eqref{kernel lemma local} and may be absorbed by the remainder. The contribution of each term \eqref{def E nu} to the integral in \eqref{kernel lemma suffice 1} is
\[
	\int_{|\xi| \geq 1} \int_{-\infty}^\infty t^j (1 - \beta(t/R)) \hat \chi(t/T) e^{-it\lambda} e^{ir\xi_1 \pm it|\xi|} |\xi|^{-\nu-k} \, dt \, d\xi.
\]
If the sign in the exponent is negative, the integral satisfies good bounds by integrating by parts in $t$ and may be absorbed into the remainder, so it suffices only to consider the situation where the sign in the exponent is positive. In this case, we perform a change of variables $\xi \mapsto \lambda \xi$ and obtain
\begin{align*}
	&\int_{|\xi| \geq 1} \int_{-\infty}^\infty t^j (1 - \beta(t/R)) \hat \chi(t/T) e^{i(r\xi_1 + t(|\xi| - \lambda))} |\xi|^{-\nu-k} \, dt \, d\xi \\
	&= \lambda^{n-\nu-k} \int_{|\xi| \geq \lambda^{-1}} \int_{-\infty}^\infty t^j (1 - \beta(t/R)) \hat \chi(t/T) e^{i\lambda (r\xi_1 + t(|\xi| - 1))} |\xi|^{-\nu-k} \, dt \, d\xi.
\end{align*}
Let $\beta_1 \in C_0^\infty(\R,[0,1])$ be equal to $1$ near $1$ and have small support. We cut the integral in the second line into $\beta_1(|\xi|)$ and $(1 - \beta_1(|\xi|))$ parts. The latter cut contributes a $O(T^{j-m+1} \lambda^{-m})$ term by integrating by parts in the $t$ variable $m$ times, and we let it be absorbed into the remainder. The $\beta_1(|\xi|)$ cut comes to
\[
	\lambda^{n-\nu-k} \int_{\R^n} \int_{-\infty}^\infty t^j (1 - \beta(t/R)) \hat \chi(t/T) e^{i\lambda (r\xi_1 + t(|\xi| - 1))} \beta_1(|\xi|)|\xi|^{-\nu-k} \, dt \, d\xi.
\]
We take a moment to note that the integrand is supported on $|t| \geq 2R$, and hence if $r \leq R$, the gradient in $\xi$ of the phase satisfies
\[
	|\nabla_\xi(r \xi_1 + t(|\xi| - 1))| = |r e_1 + t \xi/|\xi|| \geq R
\]
for all $t$ in the support of the integrand by the triangle inequality. Nonstationary phase and the bounds on our remainder term thus far yields \eqref{kernel lemma local}.

From now on, we take $r \geq R$. By a change of coordinates $t \mapsto rt$, we write the integral as
\begin{align*}
	&\lambda^{n-\nu-k} \int_{\R^n} \int_{-\infty}^\infty t^j (1 - \beta(t/R)) \hat \chi(t/T) e^{i\lambda (r\xi_1 + t(|\xi| - 1))} \beta_1(|\xi|)|\xi|^{-\nu-k} \, dt \, d\xi \\
	&= \lambda^{n-\nu-k} r^{j+1} \int_{\R^n} \int_{-\infty}^\infty t^j (1 - \beta(rt/R)) \hat \chi(rt/T) e^{i\lambda r(\xi_1 + t(|\xi| - 1))} \beta_1(|\xi|)|\xi|^{-\nu-k} \, dt \, d\xi.
\end{align*}
We cut the integral one last time into $\beta_1(|t|)$ and $(1 - \beta_1(|t|))$ components. By H\"uygen's principle, we only consider the situation where $r \leq T$, and hence $\beta_1(|t|)(1 - \beta(rt/R))\hat \chi(rt/T)$ and $(1 - \beta_1(|t|))(1 - \beta(rt/R))\hat \chi(rt/T)$ have bounded derivatives in $t$ and $r$ of all orders. The norm of the $\xi$-gradient of the phase function is
\[
	|\nabla_\xi (\xi_1 + t(|\xi| - 1))| = |e_1 + t\xi/|\xi||
\]
which is again bounded away from $0$ on the support of $(1 - \beta_1(|t|))$ and so contributes a term to be absorbed by the remainder by nonstationary phase. We write the $\beta_1(|t|)$ cut as $I_+(T,\lambda;r) + I_-(T,\lambda;r)$
where
\begin{align*}
	&I_\pm(T,\lambda;r)\\
	&= \lambda^{n-\nu-k} r^{j+1} \int_{\R^n} \int_{-\infty}^\infty t^j (1 - \beta(rt/R)) \beta_1(\pm t) \hat \chi(rt/T) e^{i\lambda r(\xi_1 + t(|\xi| - 1))} \beta_1(|\xi|)|\xi|^{-\nu-k} \, dt \, d\xi.
\end{align*}
The phase function of $I_\pm$ has a critical point at $(t,\xi) = \pm(1,-e_1)$ at which the Hessian of the phase function,
\[
	\pm \begin{bmatrix}
		0 & -1 & 0 \\
		-1 & 0 & 0 \\
		0 & 0 & I
	\end{bmatrix},
\]
is nondegenerate. Stationary phase ~\cite[Proposition 4.1.2]{Hang} yields
\[
	|\partial_r^\ell (e^{\pm i r \lambda} I_\pm(T,\lambda;r))| \leq C_{\ell,\nu,k,j} \lambda^{\frac{n-1}{2} - \nu - k} r^{j - \ell - \frac{n-1}{2}},
\]
from which \eqref{kernel lemma suffice 1} and \eqref{kernel lemma suffice 2} follow.
\end{proof}

Set
\[
	\Gamma_R = \left\{ \alpha \in \Gamma : \sup_{x,y \in \supp b} d_\tg(\alpha \tilde x, \tilde y) \leq R \right\}.
\]
The contribution of the terms of $\Gamma_R$ to the sum \eqref{reduction final} are $O(e^{CT}\lambda^{-m})$ by \eqref{kernel lemma local} of the lemma, which is better than we need. Moreover by restricting the support of $b$, we ensure that
\begin{equation}\label{def Gamma_R}
	\inf_{x,y\in \supp b} d_\tg(\alpha \tilde x, \tilde y) \geq R-1 \qquad \text{ if } \alpha \in \Gamma \setminus \Gamma_R.
\end{equation}
In light of this, what remains is to show that
\begin{equation}\label{reduction long time}
	\sum_{\Gamma \setminus \Gamma_R} \left| \int_\Sigma \int_\Sigma K_\alpha(T,\lambda;x,y) \, d\sigma(x) \, d\sigma(y) \right| \lesssim e^{CT} \lambda^\delta.
\end{equation}

The next lemma uses the previous to characterize the conjugated kernel $K_\alpha$. Here the function of the operators $B_\lambda$ begins to surface. Conjugating $K$ by $B_\lambda$ filters out points $\tilde x$ and $\tilde y$ in $\tilde M$ for which the geodesic connecting $\tilde y$ to $\alpha \tilde x$ departs and arrives in dissimilar directions. This will be very useful in Section 5, when we need to control the gradient of the phase function $d_\tg(\alpha \tilde x, \tilde y)$. As usual, $\tilde x$ and $\tilde y$ denote the respective lifts of $x$ and $y$ to the Dirichlet domain $D$.

\begin{lemma}\label{kernel alpha lemma} We have
\begin{align} \label{kernel alpha lemma 1}
	K_{\alpha}(T,\lambda; x,y) = \lambda^\frac{n-1}{2}\sum_\pm a_{\alpha, \pm}(T,\lambda;x,y) e^{\pm i \lambda d_\tg(\alpha \tilde x,\tilde y)} + O(e^{CT} \lambda^\delta)
\end{align}
where the amplitude $a_{\alpha,\pm}$ satisfies bounds
\begin{equation} \label{kernel alpha lemma 2}
	|\Delta_x^j \Delta_y^k a_{\alpha, \pm}(T,\lambda;x,y)| \leq C_{i,j} e^{C_{i,j} d_\tg(\alpha \tilde x, \tilde y)}
\end{equation}
and is supported on $\supp_x B \times \supp_x B$.
Moreover, there exists an open conical neighborhood $U \subset T^* \tilde M$ which can be made small by restricting the support of $B_\lambda$ such that 
\begin{equation} \label{kernel alpha lemma 3}
	|a_{\alpha,\pm}(T,\lambda;x,y)| \leq C_{U,N} e^{C_{U,N}d_\tg(\alpha \tilde x,\tilde y)} \lambda^{-N} \qquad N = 1,2,\ldots
\end{equation}
for all $x$ and $y$ for which neither of
\begin{align*}
	&(\gamma'(0), \alpha^*\gamma'(1)) \in U \times U \qquad \text{ nor }\\
	&(-\gamma'(0),-\alpha^*\gamma'(1)) \in U \times U
\end{align*}
hold,
where $\gamma$ is the constant-speed geodesic with $\gamma(0) = \tilde y$ and $\gamma(1) = \alpha \tilde x$, and where $\gamma'$ is understood as an element in $T^* \tilde M$, and where $\alpha^*$ is the pullback on the cotangent bundle through $\alpha$.
\end{lemma}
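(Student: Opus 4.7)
The plan is to substitute the expansion of $K(T,\lambda;\tilde x,\tilde y)$ provided by Lemma \ref{kernel lemma} into the definition \eqref{kernel alpha def} of $K_\alpha$ and evaluate the resulting oscillatory integral in $(w,z,\eta,\zeta)$ by stationary phase. The remainder term from Lemma \ref{kernel lemma} satisfies $|R(T,\lambda;\alpha\tilde w,\tilde z)|\lesssim e^{CT}\lambda^{-m}$; since the $B_\lambda$ cutoffs localize $(\eta,\zeta)$ to volume $O(\lambda^{2n})$ and $(w,z)$ to a compact set, its contribution is $O(e^{CT}\lambda^{2n-m})$, absorbed into $O(e^{CT}\lambda^\delta)$ once $m$ is chosen large. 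After rescaling $\eta\mapsto\lambda\eta$, $\zeta\mapsto\lambda\zeta$, each of the two main terms becomes a $(4n)$-fold oscillatory integral with large parameter $\lambda$, phase
\[
	\Phi_\pm=\langle x-w,\eta\rangle+\langle z-y,\zeta\rangle\pm d_\tg(\alpha\tilde w,\tilde z),
\]
and amplitude essentially $B_\lambda(x,w,\lambda\eta)\,a_\pm(T,\lambda;\alpha\tilde w,\tilde z)\,\overline{B_\lambda(y,z,\lambda\zeta)}$ times the $\lambda^{2n}$ Jacobian.

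Setting $\nabla_{(w,z,\eta,\zeta)}\Phi_\pm=0$ forces $w=x$, $z=y$, and, writing $\gamma$ for the constant-speed geodesic from $\tilde y$ to $\alpha\tilde x$, one computes $\eta=\pm\alpha^*\gamma'(1)/|\gamma'(1)|$ and $\zeta=\pm\gamma'(0)/|\gamma'(0)|$ (same sign as the $\pm$). The Hessian has the block form
\[
	\begin{bmatrix} * & * & -I & 0\\ * & * & 0 & I\\ -I & 0 & 0 & 0\\ 0 & I & 0 & 0\end{bmatrix},
\]
which has determinant $\pm 1$ and signature $0$ independent of the starred blocks, so stationary phase \cite[Corollary 1.1.8]{SFIO} applies and yields the leading term $\lambda^{(n-1)/2}a_{\alpha,\pm}e^{\pm i\lambda d_\tg(\alpha\tilde x,\tilde y)}$ in \eqref{kernel alpha lemma 1} after combining the $\lambda^{-2n}$ from the stationary phase formula with the prefactors $\lambda^{(n-1)/2+2n}$. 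The amplitude bound \eqref{kernel alpha lemma 2} then follows from \eqref{kernel a bounds} together with B\'erard's exponential bounds \eqref{distance bounds} on derivatives of $d_\tg$; the order-$N$ stationary phase error is $\lesssim e^{CT}\lambda^{(n-1)/2-N}$, so choosing $N$ large enough absorbs it into $O(e^{CT}\lambda^\delta)$.

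For the microlocal support statement \eqref{kernel alpha lemma 3}, the amplitude inherits the conic cutoffs $a(\eta/|\eta|)a(\zeta/|\zeta|)$ from $B_\lambda$; let $U$ be the conic neighborhood of $\supp a$ which these cutoffs carve out. If neither $(\gamma'(0),\alpha^*\gamma'(1))$ nor $(-\gamma'(0),-\alpha^*\gamma'(1))$ lies in $U\times U$, then the respective critical points of $\Phi_+$ and $\Phi_-$ identified above sit outside the support of their amplitudes. Off the critical point, the $\eta$- or $\zeta$-component of $\nabla\Phi_\pm$ is bounded below (by a positive constant depending on $U$) on the support of the amplitude, so repeated integration by parts in $w$ or $z$ produces the $O(\lambda^{-N})$ decay claimed, with constants of the exponential form $e^{Cd_\tg(\alpha\tilde x,\tilde y)}$ inherited from the derivatives of $a_\pm$ and $d_\tg$.

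The main obstacle is bookkeeping: ensuring that the exponential constants in \eqref{kernel alpha lemma 2} and \eqref{kernel alpha lemma 3} remain uniform when stationary phase is pushed to high enough order to absorb the remainder into $O(e^{CT}\lambda^\delta)$ and when $a_{\alpha,\pm}$ is differentiated in $x$ and $y$. Because B\'erard's derivative bounds and the bounds from Lemma \ref{kernel lemma} all have the form $C_j e^{C_j d_\tg}$, each order of the stationary phase expansion involves only finitely many products of such bounds, and the final exponential growth rate stays controlled. Everything else — locating the critical point, computing the Hessian, and identifying the correct conic directions at $\tilde y$ and $\alpha\tilde x$ — is a direct computation that mirrors the structure already seen in the proof of Proposition \ref{intermediate prop}.
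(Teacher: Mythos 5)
Your overall route is exactly the paper's: substitute Lemma \ref{kernel lemma} into \eqref{kernel alpha def}, absorb the remainder using the $O(\lambda^{2n})$ volume of the $B_\lambda$ cutoffs with $m$ large, rescale $\eta\mapsto\lambda\eta$, $\zeta\mapsto\lambda\zeta$, apply stationary phase in $(w,z,\eta,\zeta)$ at $w=x$, $z=y$, $\eta=\pm\alpha^*\gamma'(1)/|\gamma'(1)|$, $\zeta=\pm\gamma'(0)/|\gamma'(0)|$ with the block Hessian of unit determinant, and then use nonstationary phase when the geodesic directions miss the cone $U$. The bookkeeping of powers of $\lambda$ and the sources of the exponential constants are also as in the paper.

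There is, however, one step you assert that genuinely needs an argument, and it is the only geometrically nontrivial point in the microlocal support claim \eqref{kernel alpha lemma 3}. The hypothesis that $(\gamma'(0),\alpha^*\gamma'(1))\notin U\times U$ only tells you that $|-\eta+\nabla_{\tilde x} d_\tg(\alpha\tilde x,\tilde y)|\geq c$ (or the analogous bound in $\zeta$) \emph{at the point} $(w,z)=(x,y)$; you claim the relevant component of $\nabla\Phi_\pm$ is bounded below ``on the support of the amplitude,'' but the $(w,z)$-support is a whole neighborhood of $(x,y)$ determined by $\beta_0$, and you must rule out that $\nabla_{\tilde w} d_\tg(\alpha\tilde w,\tilde z)$ swings back into the cone as $(w,z)$ varies. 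The exponential bounds \eqref{distance bounds} are \emph{not} enough for this: an $e^{Cd_\tg}$-sized modulus of continuity would force the support of $\beta_0$ to shrink with $\alpha$ and destroy the uniformity in $\alpha$ that \eqref{kernel alpha lemma 3} requires (recall the lemma is summed over $O(e^{CT})$ deck transformations). The paper fills this by invoking the \emph{uniform} bound on the Hessian of $d_\tg$ away from the diagonal (Remark \ref{bounded hessian remark}, a comparison-theorem consequence of nonpositive curvature), so that $|\nabla_{\tilde x} d_\tg(\alpha\tilde x,\tilde y)-\nabla_{\tilde w} d_\tg(\alpha\tilde w,\tilde z)|\leq C(|x-w|+|y-z|)$ with $C$ independent of $\alpha$, and then shrinks the support of $\beta_0$ once and for all. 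You should add this Lipschitz estimate (and the resulting $\alpha$-independent restriction of $\beta_0$) before integrating by parts. A smaller point: what is bounded below is the $w$- (resp.\ $z$-) component of $\nabla\Phi_\pm$, namely $-\eta+\nabla_{\tilde w}d_\tg$ (resp.\ $\zeta+\nabla_{\tilde z}d_\tg$); the $\eta$- and $\zeta$-components are $x-w$ and $z-y$, which are small near the critical set, so your phrasing should be corrected even though the intended integration by parts in $w$ or $z$ is the right one.
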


\begin{proof}
	By Lemma \ref{kernel lemma}, we have
\begin{align*}
	&K_{\alpha}(T,\lambda; x,y)\\
	&= \frac{\lambda^\frac{n-1}{2}}{(2\pi)^{2n}} \sum_\pm \iiiint e^{i\langle x - w, \eta \rangle} B_\lambda(x,w,\eta) a_\pm(T,\lambda;\alpha \tilde w,\tilde z)e^{\pm i\lambda d_\tg(\alpha \tilde w, \tilde z)}\\
	&\hspace{18em} e^{i\langle z - y, \zeta \rangle} \overline{ B_\lambda(y, z,\zeta)} \, d w \, d z \, d\eta \, d\zeta \\
	&\hspace{2em}+ \frac{1}{(2\pi)^{2n}} \iiiint e^{i\langle x - w, \eta \rangle} B_\lambda( x, w,\eta) R(T,\lambda;\alpha \tilde w,\tilde z)e^{i\langle z - y, \zeta \rangle} \overline{ B_\lambda( y, z,\zeta)}\\
	&\hspace{28em}\, dw \, d z \, d\eta \, d\zeta.
\end{align*}
The second integral on the right hand side is $O(e^{CT} \lambda^{\delta})$ by taking $m$ in \eqref{kernel R bounds} greater than $2n - \delta$ and the fact that
\[
	\int_{\R^n} \int_{\R^n} | B_\lambda(x,w,\eta) | \, dw \, d\eta = O(\lambda^{n}).
\]
It suffices then to equate the first term to the right hand side of \eqref{kernel alpha lemma 1}. Using a change of variables $\eta \mapsto \lambda \eta$ and $\zeta \mapsto \lambda \zeta$, this is
\[
	\frac{\lambda^{2n + \frac{n-1}{2}}}{(2\pi)^{2n}} \sum_\pm \iiiint e^{i\lambda \Phi_\pm(x,y,w,z,\eta,\zeta)} A(T,\lambda;x,y,w,z,\eta,\zeta) \, dw \, dz \, d\eta \, d\zeta
\]
where
\[
	\Phi_\pm(x,y,w,z,\eta,\zeta) = \langle x - w, \eta \rangle \pm d_\tg(\alpha \tilde w, \tilde z) + \langle z - y, \zeta \rangle
\]
and by \eqref{B symbol},
\begin{multline}
\label{kernel alpha lemma A}
A(T,\lambda;x,y,w,z,\eta,\zeta) = \beta_0(|x - w|) \beta_0(|z - y|) \beta_0(|x^\perp|) \beta_0(|y^\perp|) b(x') \overline{b(y')} \\
a_\pm(T,\lambda; \alpha \tilde w, \tilde z)a(\eta/|\eta|) a(\zeta/|\zeta|) \beta_1(|\eta|) \beta_1(|\zeta|)
\end{multline}
For clarity, we focus only on the $\Phi_+$ component; the argument for the alternate sign is the same. The Euclidean gradient of the phase function with respect to the variables of integration is
\[
	\nabla_{w,z,\eta,\zeta} \Phi_\pm = \begin{bmatrix}
		-\eta + \nabla_{\tilde w} d_\tg(\alpha \tilde w,\tilde z) \\
		\zeta + \nabla_{\tilde z} d_\tg(\alpha \tilde w,\tilde z) \\
		x - w \\
		z - y
	\end{bmatrix}
\]
which has a critical point at $(w,z,\eta,\zeta) = (x,y, \nabla_{\tilde x} d_\tg(\alpha \tilde x, \tilde y), - \nabla_{\tilde y} d_\tg(\alpha \tilde x, \tilde y))$ at which the phase takes the value $d_\tg(\alpha \tilde x, \tilde y)$ and has Hessian
\[
	\nabla^2_{w,z,\eta,\zeta} \Phi_\pm = \begin{bmatrix}
		* & * & -I & 0 \\
		* & * & 0 & I \\
		-I & 0 & 0 & 0 \\
		0 & I & 0 & 0
	\end{bmatrix},
\]
which has determinant $-1$. We have \eqref{kernel alpha lemma 1} and \eqref{kernel alpha lemma 2} by \eqref{distance bounds}, \eqref{kernel a bounds}, and ~\cite[Corollary 1.1.8]{SFIO}. Now assume without loss of generality $U$ is an open conic neighborhood in $\R^n$ whose projection onto the manifold contains the support of $a$ in \eqref{kernel alpha lemma A}. If $\nabla_{\tilde x} d_\tg(\alpha \tilde x, \tilde y)$ lies in the complement of $U$, then
\[
	|-\eta + \nabla_{\tilde x} d_\tg(\alpha \tilde x, \tilde y)| \geq c > 0
\]
on the support of $A$ for some constant $c$ depending on $U$. Hence,
\[
	|-\eta + \nabla_{\tilde w} d_\tg(\alpha \tilde w, \tilde z)| \geq c - |\nabla_{\tilde x} d_\tg(\alpha \tilde x, \tilde y) - \nabla_{\tilde w} d_\tg(\alpha \tilde w, \tilde z)|.
\]
In the next section, we will show that the Hessian of the distance function is uniformly bounded on the entirety of $\tilde M \times \tilde M$ minus a neighborhood of the diagonal (see Remark \ref{bounded hessian remark}). Moreover since $\tilde x, \tilde y, \tilde w,$ and $\tilde z$ are all in the same local coordinates, the Christoffel symbols of the metric are bounded. Hence, the Euclidean Hessian of $d_\tg(\alpha \tilde x, \tilde y)$ in both variables is uniformly bounded\footnote{See \eqref{hessian in coordinates} for the relationship between the Hessian on a manifold and the Euclidean Hessian in local coordinates.} in $\alpha$ and
\[
	|\nabla_{\tilde x} d_\tg(\alpha \tilde x, \tilde y) - \nabla_{\tilde w} d_\tg(\alpha \tilde w, \tilde z)| \leq C(|x - w| + |y - z|)
\]
by the mean value theorem. We restrict the support of $\beta_0$ in \eqref{kernel alpha lemma A} so that $|-\eta + \nabla_{\tilde w} d_\tg(\alpha \tilde w, \tilde z)|$ is bounded away from $0$ uniformly in $\alpha$. We remark that the covector $\langle \ \cdot \ , \nabla_{\tilde x} d_\tg(\alpha \tilde x, \tilde y) \rangle$ with the Euclidean inner product is precisely the dual of $\gamma'(1)/|\gamma'(1)|$ pulled back by $\alpha$. The desired bound \eqref{kernel alpha lemma 3} then follows from \eqref{distance bounds}, \eqref{kernel a bounds}, and nonstationary phase ~\cite[Lemma 0.4.7]{SFIO} in the $w$ variable. The argument is similar if $-\nabla_{\tilde y} d_\tg(\alpha \tilde x, \tilde y)$ is in the complement of $U$.
\end{proof}

Let $\Gamma_{U}$ denote the subset of $\Gamma$ for which there exist $x$ and $y$ in the support of $a_{\alpha, \pm}$ such that the geodesic $\gamma : [0,1] \to \tilde M$ with $\gamma(0) = \tilde y$ and $\gamma(1) = \alpha \tilde x$ has both $\gamma'(0) \in U$ and $\alpha^* \gamma'(1) \in U$. Lemma \ref{kernel alpha lemma} and \eqref{finite sum} show us
\[
	\sum_{\alpha \in (\Gamma \setminus \Gamma_U) \setminus \Gamma_R} \left| \int_{ \Sigma} \int_{\Sigma} K_\alpha(T,\lambda;x,y) \, d\sigma(x) \, d\sigma(y) \right| \lesssim e^{CT}\lambda^{-m}
\]
for some $m$ which can be made large. So, \eqref{reduction long time} would follow from
\begin{equation}\label{reduction U}
	\sum_{\alpha \in \Gamma_U \setminus \Gamma_R} \left| \int_{\Sigma} \int_{\Sigma} K_\alpha(T,\lambda;x,y) \, d\sigma(x) \, d\sigma(y) \right| \lesssim e^{CT}\lambda^{\delta}.
\end{equation}

It is now time to specify the statements we require to prove Theorems \ref{main 1} and \ref{main 2}. Recall from \eqref{reduction 1} that the only requirement for the exponent $\delta$ is that it is less than $n - d - 1$. Propositions \ref{main 1 prop} and \ref{main 2 prop} along with Lemma \ref{kernel alpha lemma} and \eqref{finite sum} imply \eqref{reduction U} under the hypotheses of Theorem \ref{main 1} and Theorem \ref{main 2}, respectively.

\begin{proposition}\label{main 1 prop} Under the hypotheses of Theorem \ref{main 1}, we have
\[
	\left| \int_\Sigma \int_\Sigma a_{\alpha,\pm}(T,\lambda;x,y)e^{\pm i \lambda d_\tg(\alpha \tilde x, \tilde y)} \, d\sigma(x) \, d\sigma(y) \right| \lesssim e^{CT} \lambda^{-d/2} \qquad \text{ for } \alpha \in \Gamma_U \setminus \Gamma_R,
\]
where the constant $C$ is uniform in $\alpha$.
\end{proposition}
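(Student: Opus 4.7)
The plan is to estimate the double oscillatory integral by performing stationary phase in $y$ for each fixed $x$, then integrating the resulting pointwise estimate trivially in $x$. Since the target bound is $\lambda^{-d/2}$, a single $d$-dimensional application of stationary phase with a nondegenerate Hessian on $T_y\Sigma$ is exactly what is needed; the remaining exponential factor $e^{CT}$ is to be supplied by the amplitude bounds \eqref{kernel alpha lemma 2} together with H\"uygen's principle.

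First I would compute the gradient of the phase $d_\tg(\alpha\tilde x, \tilde y)$ in $y$: it is the unit tangent $e_r$ at $\tilde y$ of the geodesic pointing away from $\alpha\tilde x$. Its restriction to $T_y\Sigma$ vanishes precisely when $e_r \in N_y\Sigma$, i.e.\ when the geodesic from $\alpha\tilde x$ meets $\Sigma$ perpendicularly at $y$. Off a small neighborhood of this critical set, the tangential gradient is bounded below, and integration by parts in $y$ yields rapid decay in $\lambda$, absorbable into the $e^{CT}\lambda^{-d/2}$ error. Near a critical point $y_0$, working in the Fermi coordinates \eqref{fermi coordinates} (where the Christoffel symbols of $\Sigma$ vanish at the origin), the coordinate Hessian of $d_\tg(\alpha\tilde x,\cdot)|_\Sigma$ coincides with the intrinsic Hessian, and one finds
\[
	\Hess_M d_\tg\bigl|_{T_{y_0}\Sigma} - \langle e_r, \II_\Sigma \rangle.
\]

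Next, I would exploit strict negative curvature via Rauch comparison: if the sectional curvature is $\leq -a^2 < 0$, then $\Hess_M d_\tg(\alpha\tilde x,\cdot)$ acts on $e_r^\perp$ as a positive quadratic form with eigenvalues bounded below by $a\coth(a\,d_\tg(\alpha\tilde x,\tilde y))$. Since $\alpha \in \Gamma_U \setminus \Gamma_R$ guarantees $d_\tg(\alpha\tilde x,\tilde y) \geq R-1$ (see \eqref{def Gamma_R}), and we are free to take $R$ large, these eigenvalues have a uniform positive lower bound independent of $\alpha$. At a critical point $T_{y_0}\Sigma \subset e_r^\perp$, so the positive part of the restricted Hessian is uniformly bounded below, while $\langle e_r, \II_\Sigma\rangle$ is uniformly bounded on the compact $\Sigma$. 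The codimension $n-d \geq 2$ assumption then provides enough slack in the normal bundle so that, after further restricting the support of $b$ and shrinking the cone $U$ from Lemma \ref{kernel alpha lemma}, the combined Hessian has full rank $d$ on $T_{y_0}\Sigma$ with uniform lower bound on its determinant. (The detailed geometric input for this rank statement belongs to Section \ref{GEOMETRY}.)

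Once uniform nondegeneracy is in hand, the method of stationary phase in $y$ (cf.\ \cite[Proposition 4.1.2]{Hang}), combined with the amplitude bounds \eqref{kernel alpha lemma 2}, gives the pointwise-in-$x$ estimate
\[
	\left| \int_\Sigma a_{\alpha,\pm}(T,\lambda;x,y)e^{\pm i\lambda d_\tg(\alpha\tilde x,\tilde y)} \, d\sigma(y) \right| \lesssim e^{C\,d_\tg(\alpha\tilde x, \tilde y_0(x))}\lambda^{-d/2}.
\]
Integrating over $x \in \Sigma$ and using that $K_\alpha$ is supported where $d_\tg(\alpha\tilde x, \tilde y) \leq T+1$ yields the claimed $e^{CT}\lambda^{-d/2}$. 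The main obstacle is securing the uniform nondegeneracy of the restricted Hessian: this is precisely where strict negative curvature is indispensable (the constant $a > 0$ must not degenerate as it would in the nonpositive case), and where the $d \leq n-2$ hypothesis buys the flexibility that the codimension-$1$ setting of Theorem \ref{main 2} lacks.
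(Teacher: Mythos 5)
There is a genuine gap at the heart of your argument: the claimed uniform nondegeneracy of the Hessian in $y$ alone. For fixed $x$, the restricted Hessian at a critical point is indeed $\Hess_{\tilde M} d_\tg\bigl|_{T_{y_0}\Sigma} + \langle \II_\Sigma, \nabla_{\tilde y} d_\tg \rangle$ (equivalently, by Lemma \ref{hessian computation}(3), the difference of the second fundamental form of the distance sphere $S_{\alpha\tilde x}(r)$ and that of $\Sigma$, paired with the geodesic direction). Theorem \ref{main 1} imposes \emph{no} curvature hypotheses on $\Sigma$, so nothing prevents $\II_\Sigma$ at $y_0$ in the relevant normal direction from matching the sphere/horosphere form in some tangent directions; in that case the $y$-Hessian is degenerate no matter how much you shrink $\supp b$ or the cone $U$, and stationary phase in $y$ alone cannot give a uniform $\lambda^{-d/2}$. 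Your appeal to codimension $n-d \geq 2$ as providing ``slack in the normal bundle'' is misplaced: the codimension hypothesis never enters the Hessian analysis at all. Its only role is in the exponent bookkeeping — the kernel carries a prefactor $\lambda^{\frac{n-1}{2}}$, so a $\lambda^{-d/2}$ stationary-phase gain yields $\lambda^{\frac{n-d-1}{2}}$, which is an admissible $\delta < n-d-1$ in \eqref{reduction 1} precisely when $d \leq n-2$.

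The paper's proof fixes this by \emph{mixing} the $x$- and $y$-variables rather than freezing $x$. Working in coordinates aligned with the principal directions of $\Sigma$ at $0$, the diagonal entries are $\partial_{x_i}^2\phi(0,0) = \langle \kappa_i, v_1\rangle + \langle \II_{H(-v_1)}(\partial_{x_i}',\partial_{x_i}'), -v_1\rangle$ and $\partial_{y_i}^2\phi(0,0) = \langle \kappa_i, -v_2\rangle + \langle \II_{H(v_2)}(\partial_{y_i}',\partial_{y_i}'), v_2\rangle$; strict negative curvature makes the horosphere terms $\geq c > 0$ (Proposition \ref{curvature properties}(1), after using part (2) to pass from spheres of radius $\geq R-1$ to horospheres), and since $v_1 \approx v_2$ on the support (by $\Gamma_U$ and $|\nabla\phi|\le\epsilon$), the terms $\langle\kappa_i,v_1\rangle$ and $\langle\kappa_i,-v_2\rangle$ are nearly opposite, so for each $i$ at least one of the two diagonal entries is $\geq c/2$. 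Selecting for each $i$ the good variable ($x_i$ or $y_i$) produces $d$ variables in which the Hessian is uniformly nondegenerate — the cross $x$-$y$ entries being $O(1/R)$ by Lemma \ref{hessian computation}(2) and the remaining errors controlled by \eqref{hessian at 0} — and stationary phase (Lemma \ref{stationary phase lemma}(2)) in those $d$ variables, with trivial integration in the complementary ones, gives $e^{CT}\lambda^{-d/2}$. This adaptive selection across the two copies of $\Sigma$ is exactly what your fixed-$x$ scheme is missing; your nonstationary-phase step and the $e^{CT}$ bookkeeping are fine, and the Rauch/Hessian comparison you invoke is the same geometric input the paper channels through $\II_{H(v)}$, but it must be deployed in the mixed set of variables.
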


\begin{proposition}\label{main 2 prop}
	Assume the hypotheses of Theorem \ref{main 2}. If $\alpha \in \Gamma_U \setminus \Gamma_{R}$,
	\[
		\left| \int_\Sigma \int_\Sigma a_{\alpha,\pm}(T,\lambda;x,y)e^{\pm i \lambda d_\tg(\alpha \tilde x, \tilde y)} \, d\sigma(x) \, d\sigma(y) \right| \lesssim e^{CT} \lambda^{-n/2}
	\]
	where the constant $C$ is uniform in $\alpha$.
\end{proposition}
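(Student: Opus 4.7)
The plan is to apply degenerate stationary phase to the $2(n-1)$-dimensional oscillatory integral, showing that the Hessian of the phase $\phi_\alpha(x,y) = d_\tg(\alpha \tilde x, \tilde y)$ restricted to $\Sigma \times \Sigma$ has rank at least $n$ at critical points; this yields the desired $\lambda^{-n/2}$ gain. Working in Fermi coordinates so that the amplitude has compact support, a pair $(x,y)$ is critical for $\phi_\alpha|_{\Sigma \times \Sigma}$ precisely when the geodesic $\gamma$ in $\tilde M$ from $\tilde y$ to $\alpha \tilde x$ meets $\Sigma$ perpendicularly at both endpoints. At such a point, use the second variation formula together with the Jacobi field analysis promised in Section \ref{GEOMETRY} to compute the Hessian. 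In block form, the $y$-$y$ block is the difference between $\II_\Sigma(y)$ and the second fundamental form at $y$ of the geodesic sphere of radius $r = d_\tg(\alpha \tilde x, \tilde y)$ centered at $\alpha \tilde x$, paired with the outgoing direction $v = \gamma'(0)$; the $x$-$x$ block has the analogous form with incoming direction $-(\alpha^{-1})_* \gamma'(1)$; and the $x$-$y$ off-diagonal block, which records twist-type Jacobi fields along $\gamma$, is bounded in norm and, by Rauch comparison in nonpositive curvature, does not destroy the rank of the block diagonal once $r$ is large.

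By Definition \ref{limiting curvature} and Proposition \ref{limiting curvature prop}, the finite-radius sphere's second fundamental form converges to $\II_{H(\pm v)}$ as $r \to \infty$. Since $\alpha \notin \Gamma_R$ forces $r \geq R - 1$, taking $R$ sufficiently large pushes the diagonal blocks close to $\langle \II_\Sigma - \II_{H(\pm v)}, \pm v\rangle$, whose ranks sum to at least $n$ by \eqref{main 2 hypotheses}; thus the full Hessian has rank at least $n$ at each critical point in the support of $a_{\alpha,\pm}$. Apply a degenerate stationary phase argument -- integrating by parts in an $n$-dimensional subspace on which the Hessian is nondegenerate, and bounding the remaining compactly supported integration by $O(1)$ -- to produce the $\lambda^{-n/2}$ factor. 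Non-stationary phase controls the complementary region where $\nabla \phi_\alpha$ stays away from zero, and the $e^{CT}$ prefactor absorbs derivative bounds from \eqref{kernel alpha lemma 2} and \eqref{distance bounds} together with polynomial growth in $r \leq T + 1$.

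The main technical obstacle is ensuring uniformity in $\alpha \in \Gamma_U \setminus \Gamma_R$: one must control the off-diagonal Hessian block and the rate of convergence of the finite-radius sphere's second fundamental form to $\II_{H(\pm v)}$ in the merely nonpositively curved setting, where strict negativity (and the associated exponential decay of Jacobi fields) is not assumed. One must also handle the kernel of the Hessian, on which integration contributes only $O(1)$ rather than decay, and coherently select the $n$ nondegenerate directions as $\alpha$ varies; this is the point of the microlocalization via the conic neighborhood $U$ and the cutoffs built into $a_{\alpha,\pm}$, which pin down the incoming and outgoing directions $\pm v$ modulo errors that can be absorbed by the amplitude.
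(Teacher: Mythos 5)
Your proposal is correct and mirrors the paper's proof: after reducing to the case $|\nabla\phi|\leq\epsilon$ (handling the complementary region by nonstationary phase), the Hessian is decomposed via \eqref{hessian computation final} into block second-fundamental-form differences plus an off-diagonal term bounded by $O(1/R)$ (Lemma \ref{hessian computation}(2)), the sphere forms are replaced by horosphere forms using Proposition \ref{curvature properties}(2), and the rank hypothesis \eqref{main 2 hypotheses} is used to select $n$ of the $2(n-1)$ variables in which to apply nondegenerate stationary phase, with the remaining $n-2$ integrations contributing $O(1)$. The only cosmetic difference is that the paper obtains the off-diagonal bound by a direct Jacobi-field convexity argument rather than by citing Rauch comparison, and it works with near-critical rather than exactly critical points, but these are the same idea.
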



\section{Geometry and Phase Function Bounds} \label{GEOMETRY}

We will need some information about the first and second derivatives of the phase functions in Propositions \ref{main 1 prop} and \ref{main 2 prop}. This section will provide the tools necessary to do so. Specifically, we will compute the Hessian of the phase function using the second fundamental form of $\Sigma$ and of spheres in $\tilde M$. We then we verify Definition \ref{limiting curvature} and prove some useful properties of the second fundamental form of circles of large radius. Finally, we use these properties to provide good bounds on the Hessian of the phase function.
DoCarmo's text ~\cite{doCarmo} is our primary reference for this section.

We outline some basic facts before we begin.
For a general Riemannian manifold $(M,g)$ with Levi-Civita connection $\nabla$, the Hessian of $f \in C^\infty(M)$ is the quadratic form
\begin{equation}\label{def hessian}
	\Hess f(X,Y) = X(Y f) - (\nabla_X Y) f
\end{equation}
where $X$ and $Y$ are vector fields on $M$. For future use we note, in local coordinates $x = (x_1,\ldots, x_n)$,
\begin{equation}\label{hessian in coordinates}
	\Hess f \left(\frac{\partial}{\partial x_i},\frac{\partial}{\partial x_j} \right) = \frac{\partial^2 f}{\partial x_i \partial x_j} - \sum_k \Gamma_{ij}^k \frac{\partial f}{\partial x_k}
\end{equation}
and so if the Christoffel symbols $\Gamma_{ij}^k$ are small, the Hessian of $f$ is nearly the Euclidean Hessian. Suppose $\Sigma$ is a submanifold of $M$ with the induced metric $\overline g$ and Levi-Civita connection $\overline \nabla$. By \eqref{def hessian},
\begin{equation}\label{hessian restriction}
	\Hess_\Sigma f(X,Y) = \Hess_M f(X,Y) + \II_\Sigma(X,Y)f
\end{equation}
where $X,Y$ are vectors in $\Sigma$ and where $\II_\Sigma$ is the second fundamental form of $\Sigma$ in $M$, given by
\begin{equation} \label{def second fundamental form}
	\II_\Sigma(X,Y) = \nabla_X Y - \overline \nabla_X Y = (\nabla_X Y)^\perp,
\end{equation}
the orthogonal projection of $\nabla_X Y$ onto the normal bundle $N\Sigma$. The Hessians and the second fundamental form are tensorial and only depend on the value of $X$ and $Y$ at a point. (For details see ~\cite[Section 6.2]{doCarmo}.)

\subsection{Computing the Hessian of the Phase Function}

We will want to compute the Hessian of the phase functions from Propositions \ref{main 1 prop} and \ref{main 2 prop}, that is the function $\phi : \Sigma \times \Sigma \to \R$ given by
\[
	\phi(x,y) = d_\tg(\alpha \tilde x, \tilde y)
\]
where $\Sigma \times \Sigma$ is endowed with the product metric, where $\tilde x$ and $\tilde y$ are the respective lifts of $x$ and $y$ to our Dirichlet domain $D$ in the universal cover, and where $\alpha$ is a fixed, non-identity deck transformation. By \eqref{hessian restriction},
\begin{equation}\label{hessian sum}
	\Hess_{\Sigma \times \Sigma} \phi (X,Y) = \Hess_{\alpha \tilde \Sigma \times \tilde \Sigma} d_\tg (X, Y) = \Hess_{\tilde M \times \tilde M} d_\tg(X,Y) + \II_{\alpha \tilde \Sigma \times \tilde \Sigma}(X, Y) d_\tg
\end{equation}
where $X$ and $Y$ are both vectors in $\Sigma \times \Sigma$ with the same base point, but are also understood to be their respective lifts to $\alpha \tilde \Sigma \times \tilde \Sigma$ where appropriate. To compute the Hessian of the phase function, it suffices to compute the Hessian of $d_\tg$ on $\tilde M \times \tilde M$ and the second fundamental form of $\alpha \tilde \Sigma \times \tilde \Sigma$. To this end, we write
\[
	X = X_1 \oplus X_2 \qquad \text{ and } \qquad Y = Y_1 \oplus Y_2
\]
where $X_1$ and $Y_1$ are vectors on $\alpha \tilde \Sigma$ and $X_2$ and $Y_2$ are vectors on $\tilde \Sigma$ and write
\begin{align}
	\label{hessian decomposition} \Hess_{\tilde M \times \tilde M} d_\tg(X,Y) &= \sum_{i,j = 1,2} \Hess_{\tilde M \times \tilde M} d_\tg(X_i,Y_j) \qquad \text{ and } \\
	\label{second fundamental form decomposition} \II_{\alpha \tilde \Sigma \times \tilde \Sigma}(X, Y)d_\tg &= \sum_{i,j = 1,2} \II_{\alpha \tilde \Sigma \times \tilde \Sigma}(X_i, Y_j)d_\tg.
\end{align}
Note the $i \neq j$ terms of \eqref{second fundamental form decomposition} vanish and we are left with
\begin{equation}\label{second fundamental form decomposition'}
	\II_{\alpha \tilde \Sigma \times \tilde \Sigma}(X, Y) = \II_{\alpha \tilde \Sigma}(X_1, Y_1) d_\tg + \II_{\tilde \Sigma}(X_2, Y_2) d_\tg.
\end{equation}
The next lemma helps us compute the terms in \eqref{hessian decomposition}.

\begin{lemma} \label{hessian computation}
	Assume the notation of \eqref{hessian decomposition}, suppose $\tilde x$ and $\tilde y$ are \emph{any} points in $\tilde M$, let $r = d_\tg(\tilde x, \tilde y)$, and let $X_1,Y_1 \in T_{\tilde x} \tilde M$ and $X_2,Y_2 \in T_{\tilde y} \tilde M$. The following are true.
	\begin{enumerate}
		\item $X_1 d_\tg = \cos \theta$ where $\theta$ is the angle between $X_1$ and the first derivative of the geodesic adjoining $\tilde y$ to $\tilde x$. In particular, $X_1 d_\tg = 0$ if and only if $X_1$ is perpendicular to this geodesic. This holds similarly for $X_2 d_\tg$.
		\item We have absolute bounds
		\begin{align*}
			&|\Hess_{\tilde M \times \tilde M}d_\tg(X_1,Y_2)| \leq 2|X_1||Y_2|/r \qquad \text{ and } \\ &|\Hess_{\tilde M \times \tilde M}d_\tg(X_2,Y_1)| \leq 2|X_2||Y_1|/r.
		\end{align*}
		\item Let $S_{\tilde y}(r)$ denote the sphere in $\tilde M$ with center $\tilde y$ and radius $r$. Then,
		\begin{align*}
			\Hess_{\tilde M \times \tilde M}d_\tg(X_1, Y_1) &= -\II_{S_{\tilde y}(r)}(X_1',Y_1') d_\tg
		\end{align*}
		where $X_1'$ and $Y_1'$ are the orthogonal projections of $X_1$ and $Y_1$ onto $T_{\tilde x} S_{\tilde y}(r)$, respectively.
		We similarly have
		\[
			\Hess_{\tilde M \times \tilde M}d_\tg(X_2, Y_2) = -\II_{S_{\tilde x}(r)}(X_2',Y_2') d_\tg.
		\]
	\end{enumerate}
\end{lemma}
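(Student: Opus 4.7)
My plan is to read all three statements off of the formula $\Hess f(X,Y) = \langle \nabla_X \nabla f, Y\rangle$ applied to $f = d_\tg$ on $\tilde M \times \tilde M$ with the product metric and connection, after first computing $\nabla d_\tg$. Because $\tilde M$ is Cartan--Hadamard, the exponential map at every point is a diffeomorphism, so $d_\tg(\cdot,\tilde y)$ is smooth on $\tilde M \setminus \{\tilde y\}$; with $\gamma : [0,r] \to \tilde M$ the unit-speed geodesic from $\tilde y$ to $\tilde x$, the gradient in the first factor is the outward unit radial vector $T(\tilde x) = \gamma'(r)$, while the gradient in the second factor is $-\gamma'(0)$. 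Item (1) then follows immediately from the first variation formula: $X_1 d_\tg = \langle X_1, \gamma'(r)\rangle = \cos\theta$ (taking $X_1$ unit, as the statement tacitly assumes).

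For item (3), I fix $\tilde y$ and work in the first factor. Differentiating the eikonal identity $|\nabla_1 d_\tg|^2 \equiv 1$ in any direction yields $\Hess d_\tg(X_1,T) = 0$, so the Hessian kills the radial direction and only the components $X_1', Y_1'$ tangent to $S_{\tilde y}(r)$ contribute. On sphere-tangent vectors, $\Hess d_\tg(X_1',Y_1') = \langle \nabla_{X_1'} T, Y_1'\rangle$, and since $T$ is the outward unit normal to $S_{\tilde y}(r)$, the Weingarten formula identifies this with $-h_{S_{\tilde y}(r)}(X_1',Y_1')$, where $h$ is the scalar second fundamental form. The vector-valued $\II_{S_{\tilde y}(r)}(X_1',Y_1')$ is parallel to $T$ and $T d_\tg = 1$, so $\II_{S_{\tilde y}(r)}(X_1',Y_1') d_\tg = h_{S_{\tilde y}(r)}(X_1',Y_1')$, giving the stated identity after the sign. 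The formula for $\Hess d_\tg(X_2,Y_2)$ is obtained by swapping the roles of $\tilde x$ and $\tilde y$.

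The main content is item (2). Extending $X_1$ and $Y_2$ to the product vector fields $X_1 \oplus 0$ and $0 \oplus Y_2$, the product connection gives $\nabla_X Y = 0$ at $(\tilde x,\tilde y)$, so I only need to control how the vector-valued map $\tilde u \mapsto \nabla_2 d_\tg(\tilde u,\tilde y) = -\gamma_{\tilde u,\tilde y}'(0) \in T_{\tilde y}\tilde M$ changes as $\tilde u$ moves in direction $X_1$. I set this up as a Jacobi-field problem: choose a curve $\tilde x(s)$ with $\tilde x(0) = \tilde x$ and $\tilde x'(0) = X_1$, let $v(s) = \exp_{\tilde y}^{-1}(\tilde x(s))$ and $\gamma_s(t) = \exp_{\tilde y}(tv(s))$; then $J(t) = \partial_s \gamma_s(t)|_{s=0}$ is a Jacobi field along $\gamma_0$ with $J(0) = 0$ and $J(1) = X_1$. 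Normalizing the velocity $\gamma_s'(0) = v(s)$ and using the parallel/perpendicular decomposition, the desired derivative reduces, after reparametrizing by arc length, to $-J^{\perp\prime}(0)/r$. The crucial input, which is where nonpositive curvature enters, is the Rauch-type comparison $|J^{\perp\prime}(0)| \leq |J^\perp(r)|/r$: in a Cartan--Hadamard manifold $|J^\perp|$ is convex with $J^\perp(0) = 0$, hence $|J^\perp(\tau)|/\tau$ is nondecreasing on $(0,r]$, so the one-sided limit $|J^{\perp\prime}(0)|$ is bounded by $|J^\perp(r)|/r \leq |X_1|/r$. Cauchy--Schwarz then yields $|\Hess d_\tg(X_1, Y_2)| \leq |X_1||Y_2|/r$, comfortably inside the factor-of-two bound claimed; the estimate on $\Hess d_\tg(X_2, Y_1)$ is identical by symmetry. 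The main obstacle in executing this plan is clean bookkeeping for the reparametrization from the non-unit-speed geodesic $\gamma_0$ (parametrized on $[0,1]$) to arc length, since the analytic input reduces to a single convexity argument.
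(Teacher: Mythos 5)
Your proposal is correct, and while it rests on the same underlying curvature input as the paper (convexity of Jacobi fields, or equivalently of solutions of the Jacobi ODE, under nonpositive curvature), it is organized quite differently. The paper never computes the gradient field of $d_\tg$: it differentiates $d_\tg^2 = \int_0^1 |\partial_t\gamma|^2\,dt$ twice through a two-parameter family of geodesics, so part (1) is the resulting first-variation boundary term, part (2) comes from the second-variation boundary term $\langle \nabla_t\partial_v\gamma(1),X_1\rangle$ estimated via the scalar function $h$ (the pairing of the Jacobi field with the parallel translate of $X_1$) and convexity, and part (3) is a normal-coordinate argument showing the radial directions drop out. You instead work with $\nabla d_\tg$ as the unit radial field: (1) is immediate, (3) follows from the eikonal identity plus the Weingarten relation (equivalent to the paper's computation, and your sign bookkeeping checks out against the convention $\II(X,Y)=(\nabla_XY)^\perp$), and for (2) you differentiate the unit initial velocity $-v(s)/|v(s)|$ directly, which makes the radial/tangential Jacobi components cancel exactly and leaves the mixed Hessian equal to $-\langle \tilde J^{\perp\prime}(0),Y_2\rangle$ for the arc-length Jacobi field $\tilde J$ with $\tilde J(0)=0$, $\tilde J(r)=X_1$. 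This buys a sharper constant, $|X_1||Y_2|/r$, than the paper's $2|X_1||Y_2|/r$; the paper's factor of $2$ is exactly the first-order cross term $\partial_u d_\tg\,\partial_v d_\tg$ that your formulation absorbs. (The Euclidean case, where the mixed Hessian is $-\langle X_1^\perp,Y_2^\perp\rangle/r$, confirms both your identity and the constant.) The only blemish is the bookkeeping point you yourself flag: with the arc-length field the reduction is to $-\tilde J^{\perp\prime}(0)$, not $-\tilde J^{\perp\prime}(0)/r$ (the extra $1/r$ belongs to the $[0,1]$-parametrized field, for which $J^\perp(r)$ in your comparison inequality would not make sense); once the parametrization is fixed consistently, the convexity argument $|\tilde J^{\perp\prime}(0)|\le |\tilde J^\perp(r)|/r\le |X_1|/r$ is exactly right and the stated bounds follow.
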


\begin{proof}
Fix $X_1$ and $Y_2$ as above and let $\sigma_1, \sigma_2 : (-\epsilon,\epsilon) \to \tilde M$ be curves with
\[
	\sigma_1'(0) = X_1 \in T_{\tilde x}\tilde M \qquad \text{ and } \qquad \sigma_2'(0) = Y_2 \in T_{\tilde y}\tilde M.
\]
We then define a map
\begin{align*}
	\gamma : (-\epsilon,\epsilon) \times (-\epsilon,\epsilon) \times [0,1] &\to \tilde M
\end{align*}
such that for all $u,v \in (-\epsilon,\epsilon)$,
\[
	\gamma(u,v,1) = \sigma_1(u) \qquad \text{ and } \qquad \gamma(u,v,0) = \sigma_2(v),
\]
and where $t \mapsto \gamma(u,v,t)$ traces out the constant-speed geodesic connecting $\sigma_2(v)$ to $\sigma_1(u)$. Since $\partial_u, \partial_v,$ and $\partial_t$ are coordinate vector fields in the domain of $\gamma$, the Lie brackets
\[
	[\partial_u, \partial_t] = 0, \qquad [\partial_v, \partial_t] = 0, \qquad \text{ and } \qquad [\partial_u, \partial_v] = 0
\]
all vanish. Hence,
\[
	0 = [\partial_u, \partial_t] \gamma = [\partial_u \gamma, \partial_v \gamma] = \nabla_{u} \partial_t \gamma - \nabla_{t} \partial_u \gamma,
\]
where $\nabla$ is the Levi-Civita connection on $\tilde M$ and where $\nabla_u$ and $\nabla_t$ are shorthand for the covariant derivative with respect to the vector fields $\partial_u \gamma$ and $\partial_t \gamma$. This and similar calculations yield the identities
\[
	\nabla_u \partial_t \gamma = \nabla_t \partial_u \gamma, \qquad \nabla_v \partial_t \gamma = \nabla_t \partial_v \gamma, \qquad \text{ and } \qquad \nabla_u \partial_v \gamma = \nabla_v \partial_u \gamma
\]
which we will use repeatedly and without reference. Next, we write
\[
	d_\tg(\sigma_1(u), \sigma_2(v))^2 = \int_0^1 |\partial_t \gamma(u,v,t)|^2 \, dt.
\]
Taking a derivative in $u$ of $\frac{1}{2} d_\tg^2$ yields
\begin{align*}
	d_\tg \partial_u d_\tg &= \int_0^1 \langle \partial_t \gamma(u,v,t), \nabla_u \partial_t \gamma(u,v,t) \rangle \, dt \\
	&= \int_0^1 \langle \partial_t \gamma(u,v,t), \nabla_t \partial_u \gamma(u,v,t) \rangle \, dt\\
	&= \int_0^1 \partial_t \langle \partial_t \gamma(u,v,t), \partial_u \gamma(u,v,t) \rangle \, dt \\
	&= \langle \partial_t \gamma(u,v,1), \partial_u \gamma(u,v,1) \rangle
\end{align*}
where the third line is due to the geodesic equation $\nabla_t \partial_t \gamma = 0$ and the fourth to the fundamental theorem of calculus. We deduce part (1) of the lemma from this and a similar computation in the other variable. Next, we take a derivative in $v$ and obtain
\begin{align*}
	d_\tg \partial_u \partial_v d_\tg &+ \partial_u d_\tg \partial_v d_\tg\\
	&= \langle \nabla_v \partial_t \gamma(u,v,1), \partial_u \gamma(u,v,1) \rangle + \langle \partial_t \gamma(u,v,1), \nabla_v \partial_u \gamma(u,v,1) \rangle.
\end{align*}
Note $\nabla_v \partial_u \gamma(u,v,1) = \nabla_u \partial_v \gamma(u,v,1) = 0$, since $\gamma(u,v,1)$ is constant in $v$. Hence,
\begin{equation} \label{h lemma 1}
	d_\tg \partial_u \partial_v d_\tg + \partial_u d_\tg \partial_v d_\tg = \langle \nabla_t \partial_v \gamma(u,v,1), X_1 \rangle.
\end{equation}
We pause here to make a couple observations. First, $t \mapsto \partial_v \gamma(0,0,t)$ is a Jacobi field along $t \mapsto \gamma(0,0,t)$ with boundary data
\[
	\partial_v \gamma(0,0,0) = Y_2 \qquad \text{ and } \qquad \partial_v \gamma(0,0,1) = 0.
\]
Observe that $\partial_u \partial_v d_\tg$ is independent of our choice of curves $\sigma_1$ and $\sigma_2$, and that
\[
	\Hess_{\tilde M \times \tilde M} d_\tg(X_1,Y_2) = X_1 (Y_2 d_\tg) = \partial_u \partial_v d_\tg(\sigma_1(u),\sigma_2(v))
\]
at $u = v = 0$. To get part (2) of the lemma, it suffices to show that the right side of \eqref{h lemma 1} is bounded by $2|X_1||Y_2|/d_\tg(\tilde x, \tilde y)$. Let $h(t)$ denote the inner product of $\partial_v \gamma(0,0,t)$ with the parallel translate of $\pm X_1$ along $\gamma$, with the sign chosen so that $h(0) \geq 0$. By the Jacobi equation,
\[
	h''(t) + R(t) h(t) = 0
\]
for some nonpositive function $R(t)$ depending on the Riemann curvature tensor. We may as well assert that $h$ be nontrivial and hence vanishes only at $1$. Then, $h \geq 0$ on $[0,1]$ and so
\[
	h''(t) \geq 0 \qquad \text{ for } t \in [0,1].
\]
By convexity,
\[
	0 \leq h(t) \leq h(0)(1 - t),
\]
and hence
\[
	0 \geq h'(1) \geq -h(0).
\]
We know $h'(1)$ is equal to the right hand side of \eqref{h lemma 1} up to a sign, and that $|h(0)| \leq |X_1||Y_2|$ by Cauchy-Schwarz. Furthermore, $|X_1 d_\tg| \leq |X_1|$ and $|Y_2 d_\tg| \leq |Y_2|$ by the triangle inequality. Hence,
\[
	|\partial_u \partial_v d_\tg| = \frac{| \langle \nabla_t \partial_v \gamma(0,0,1), X_1 \rangle - (\partial_u d_\tg) (\partial_v d_\tg) |}{d_\tg} \leq \frac{2 |X_1||Y_2|}{d_\tg},
\]
as desired.

Finally we prove part (3) of the lemma. Consider geodesic normal coordinates $(x_2,\ldots, x_n)$ at $\tilde x$ of the sphere $S_{\tilde y}(r)$. We take an extension $(x_1,x_2,\ldots, x_n)$ of these coordinates to a neighborhood of $\tilde M$, where $x_1$ is the radial coordinate. By the geodesic equation $\nabla_1 \partial_1 = 0$,
\[
	\Hess_{\tilde M}d_\tg(\partial_1,\partial_1) = \partial_1 (\partial_1 x_1) - (\nabla_1 \partial_1) x_1 = 0.
\]
Moreover if $i \neq 1$,
\[
	\Hess_{\tilde M}d_\tg(\partial_i,\partial_1) = \partial_i (\partial_1 x_1) - (\nabla_i \partial_1) x_1 = -\nabla_1 \partial_i x_1,
\]
where $\nabla_i \partial_1 = \nabla_1 \partial_i$ by a similar argument as in the proof of part (1). Notice that $\partial_i$ is a perpendicular Jacobi field along the $x_1$ coordinate geodesic. Hence, $\nabla_1 \partial_i$ is also perpendicular to the $x_1$ coordinate geodesic, and $\nabla_1 \partial_i x_1 = 0$. Then,
\[
	\Hess_{\tilde M}d_\tg(X_1,Y_1) = \Hess_{\tilde M}d_\tg(X_1', Y_1')
\]
where $X_1'$ and $Y_1'$ are the orthogonal projections of $X_1$ and $Y_1$ onto $T_{\tilde x} S_{\tilde y}(r)$. It suffices then to show 
\[
	\Hess_{\tilde M}d_\tg(X_1,Y_1) = -\II_{S_{\tilde y}(r)}(X_1,Y_1)d_\tg
\]
in the situation where $X_1$ and $Y_1$ are vectors tangent to the sphere $S_{\tilde y}(r)$. In this situation we have $Y_1 d_\tg \equiv 0$, whence
\[
	\Hess_{\tilde M}d_\tg(X_1,Y_1) = - (\nabla_{X_1} Y_1) d_\tg = - (\nabla_{X_1}Y_1)^\perp d_\tg = -\II_{S_{\tilde y}(r)}(X_1,Y_1) d_\tg,
\]
as desired.
\end{proof}

\begin{remark}\label{bounded hessian remark}
	By comparison with the Euclidean case, the Hessian of the distance function $d_\tg$ in one variable is uniformly bounded for $d_\tg \geq 1$ (see ~\cite[Theorem 1.1]{redbook}). This, part (2) of Lemma \ref{hessian computation}, and \eqref{hessian decomposition} show that the Hessian of $d_\tg$ in both variables is uniformly bounded for $d_\tg \geq 1$.
\end{remark}

Lemma \ref{hessian computation}, \eqref{hessian sum}, and \eqref{second fundamental form decomposition'} combined provide us with the crucial computation
\begin{align}
	\label{hessian computation final} \Hess_{\Sigma \times \Sigma} \phi(X,Y) &= \II_{\alpha \tilde \Sigma}(X_1,Y_1)d_\tg - \II_{S_{\tilde y}(d_\tg)}(X'_1,Y'_1)d_\tg \\
	\nonumber & + \II_{\tilde \Sigma}(X_2,Y_2)d_\tg - \II_{S_{\alpha \tilde x}(d_\tg)}(X_2',Y_2')d_\tg + R(X,Y)
\end{align}
where
\[
	|R(X,Y)| \leq 2(|X_1||Y_2| + |X_2||Y_1|)/d_\tg.
\]

\subsection{The Second Fundamental Form of Spheres}

To provide any useful bounds on $\Hess_{\Sigma \times \Sigma} \phi$, we need to understand the behavior of the second fundamental form of spheres of large radius. To do this, we first need to understand the behavior of the second fundamental form of spheres of \emph{infinite} radius -- horospheres. We begin by validating Definition \ref{limiting curvature} and providing some useful facts about $\II_{H(\tilde v)}$.

\begin{proposition} \label{limiting curvature prop}
	Let $v$ be any vector in the unit sphere bundle $SM$ and let $\II_{H(v)}$ be as in Definition \ref{limiting curvature}. Let $X$ and $Y$ denote vectors in $TM$ which share the same root with $v$ and are perpendicular to $v$. The following are true.
	\begin{enumerate}
		\item $\II_{H(v)}(X,Y)$ is well defined and bilinear in $X$ and $Y$.
		\item $\langle \II_{H(v)}(X,X), v \rangle \geq 0$ for all $X$.
		\item $\II_{H(v)}$ is continuous in $v$. More precisely, if $v$ is allowed to vary on any small open subset of $SM$ and $X(v)$ and $Y(v)$ depend continuously on $v$, then
		\[
			v \mapsto \langle \II_{H(v)}(X(v),Y(v)), v \rangle
		\]
		is a continuous function on $SM$.
	\end{enumerate}
\end{proposition}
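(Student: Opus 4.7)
The plan rests on the standard construction of stable Jacobi fields on a Hadamard manifold. Working only with Jacobi fields $J$ perpendicular to $\gamma'$, which satisfy the Jacobi equation $\tfrac{D^2}{dr^2} J + R(J,\gamma')\gamma' = 0$, I would first record the key consequence of $K \leq 0$:
\[
\tfrac{d^2}{dr^2}|J(r)|^2 = 2\left|\tfrac{D}{dr}J\right|^2 - 2\langle R(J,\gamma')\gamma', J\rangle \geq 0,
\]
so that $|J|^2$ is convex along $\gamma$. Uniqueness in Definition~\ref{limiting curvature} is then immediate: any bounded perpendicular Jacobi field $J$ with $J(0) = 0$ has $|J|^2$ convex, nonnegative, bounded on $[0,\infty)$, and zero at $0$, which forces $|J|^2 \equiv 0$ (otherwise the convex extrapolation past any positive value grows at least linearly). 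Together with linearity of the Jacobi equation, this gives the bilinearity claim in (1) via the polarization in Definition~\ref{limiting curvature}.

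For existence I would take the classical limit of shape operators of large spheres. By Cartan--Hadamard $\gamma$ has no conjugate points, so for each $r > 0$ there is a unique perpendicular Jacobi field $J^r_X$ with $J^r_X(0) = X$ and $J^r_X(r) = 0$; set $U_r X := -\tfrac{D}{ds} J^r_X(0)$. A standard symplectic pairing of Jacobi fields shows $U_r$ is a symmetric endomorphism of $N_p\gamma$ and is in fact the shape operator at $p$ of the sphere of radius $r$ centered at $\gamma(r)$, and by Gauss--Codazzi it satisfies the Riccati equation $U_r' + U_r^2 + R = 0$. Under $K \leq 0$, this forces $\{U_r\}$ to be monotone in $r$ and uniformly bounded in operator norm on compact ranges of $v$, yielding a symmetric limit $U_\infty := \lim_{r \to \infty} U_r$. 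The Jacobi field with initial data $J_X(0) = X$ and $\tfrac{D}{dr}J_X(0) = -U_\infty X$, recovered as $\lim_r J^r_X$ uniformly on compact intervals, is bounded on $[0,\infty)$ by Rauch comparison against the constant-curvature model of any lower bound $-b^2 \leq K$ (in which the corresponding stable field is explicit and bounded).

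Part (2) is then immediate: $|J_X|^2$ is convex and bounded on $[0,\infty)$, hence nonincreasing, so
\[
0 \geq \tfrac{d}{dr}|J_X|^2(0) = 2\langle \tfrac{D}{dr}J_X(0), X\rangle,
\]
which rearranges to $\langle \II_{H(v)}(X,X), v\rangle \geq 0$. For part (3), the Jacobi equation along $r \mapsto \exp_p(rv)$ has coefficients depending smoothly on $(p,v) \in SM$, so each $U_r$ depends continuously on $v$ by continuous dependence of ODEs on parameters. Compactness of $M$ delivers uniform curvature bounds, so that the monotone convergence $U_r(v) \to U_\infty(v)$ is locally uniform in $v$; continuity of $U_\infty$ and hence of $\II_{H(v)}$ follows.

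The main obstacle I anticipate is the existence half of Step 2: verifying that $U_r$ is genuinely monotone in $r$ and uniformly bounded. This is essentially the Hadamard--Eberlein theory of stable/unstable bundles over the geodesic flow and reduces to a careful manipulation of the Riccati equation under $K \leq 0$ together with a comparison against a model of constant curvature. Once the monotone limit $U_\infty$ is in place, the remaining pieces of (1)--(3) require little more than bookkeeping.
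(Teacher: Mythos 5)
Your overall strategy is a genuine alternative to the paper's. The paper reduces everything to a family of scalar ODEs $h'' + Kh = 0$: it builds $h$ as the pointwise limit of $h_s$ (the solution with $h_s(0)=1$, $h_s(s)=0$) and makes the whole argument quantitative through the explicit estimate $|\partial_s h_s(r)| \leq r/s^2$. You instead run the vector/matrix Riccati theory of shape operators $U_r$ of large spheres, which is the Hadamard--Eberlein stable-bundle picture. Both are legitimate routes, and parts (1) and (2) of your proposal are essentially correct modulo the deferred monotonicity and boundedness of the $U_r$ (which you acknowledge). In fact, your convexity argument for (2) --- that a convex, bounded function on $[0,\infty)$ is nonincreasing --- is cleaner than the paper's appeal to comparison with the flat case.

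The real gap is in part (3). You write that because $U_r(v)$ is continuous in $v$ for each $r$ and $U_r(v) \to U_\infty(v)$ monotonically, the convergence is \emph{locally uniform} in $v$ and continuity of $U_\infty$ follows. But monotone pointwise convergence of continuous functions gives the uniform conclusion only via Dini's theorem, which presupposes that the limit is already known to be continuous --- exactly what you are trying to prove. Without that, a monotone limit of continuous functions is merely upper (or lower, depending on direction) semicontinuous, and nothing in your proposal supplies the other half. This is precisely where the paper's quantitative step is doing irreplaceable work: the bound $|\partial_s h_s(r)| \leq r/s^2$, obtained by a convexity argument applied to $\partial_s h_s$, gives a \emph{rate} of convergence that is uniform over $v$ and over $r$ in compacts, from which the $\varepsilon/3$ argument for continuity goes through. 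To close your version you would need the Riccati analogue: a uniform-in-$v$ estimate of the form $\|U_r - U_\infty\| \lesssim r^{-1}$, which is indeed available (it is morally Proposition~\ref{curvature properties}(2)), but it must be proved \emph{before} continuity rather than deduced afterward, and your write-up does not do this.

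A secondary point: the existence step leans on ``$\{U_r\}$ is monotone and uniformly bounded'' as a black box from stable-bundle theory. That is acceptable if cited, but the paper proves its analogue from scratch via the scalar Riccati comparison, so if you want a self-contained proof you should at least sketch that the Riccati flow $U' + U^2 + R = 0$ with $R \leq 0$ preserves positivity and yields the comparison $U_r - U_s \geq 0$ for $r < s$.
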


The proof of Proposition is very similar to, and can actually be deduced from, the corresponding proposition in ~\cite{emmett1}. We provide a proof here for the sake of completeness.

\begin{proof}
Let $\gamma : \R \to M$ denote the geodesic with $\gamma'(0) = v$, and let $X$ be a vector perpendicular to $\gamma'(0)$. The definition requires we show there exists a unique Jacobi field $J$ along $\gamma$ so that $J(0) = X$ and
\begin{equation} \label{J bounded}
	J(r) = O(1) \qquad \text{ for } r \geq 0.
\end{equation}
The difference of any two such Jacobi fields satisfies \eqref{J bounded} and vanishes at $0$. By comparison with the Euclidean setting, the difference must have vanishing first derivative at $0$ as well, or else contradict \eqref{J bounded}. Hence $J$ is unique.

We make some simplifying reductions before proceeding with the proof of existence. We assume without loss of generality that $X$ has norm $1$ and extend $X$ by parallel transport to a vector field $X(r)$ along $\gamma$. Then if $h$ is a smooth function on $\R$ satisfying
\begin{equation}\label{h Jacobi}
	h'' + K h = 0
\end{equation}
where $K(r) = K(\gamma'(r), X(r)) \leq 0$ is the sectional curvature on $M$, then $hX$ is a Jacobi field along $\gamma$. Hence it suffices to construct an $h$ satisfying \eqref{h Jacobi} with
\begin{equation}\label{h Jacobi 0}
	h(0) = 1
\end{equation}
and
\begin{equation} \label{h Jacobi bounded}
	h(r) = O(1) \qquad \text{ for } r \geq 1.
\end{equation}
Let $h_s$ denote the unique function satisfying \eqref{h Jacobi}, \eqref{h Jacobi 0}, and $h_s(s) = 0$. We will show the limit
\[
	h = \lim_{s \to \infty} h_s = h_1 + \int_1^\infty \frac{\partial}{\partial s} h_s \, ds
\]
converges uniformly on compact sets and satisfies \eqref{h Jacobi bounded} (we obtain \eqref{h Jacobi} for free by uniform convergence on compact sets). Both this and \eqref{h Jacobi bounded} follow provided we show
\begin{equation} \label{convexity bounds}
	\left| \frac{\partial}{\partial s} h_s(r) \right| \leq \frac{r}{s^2} \qquad \text{ for } r \leq s.
\end{equation}
First note $\frac{\partial}{\partial s} h_s$ satisfies \eqref{h Jacobi}, that
\[
	\frac{\partial}{\partial s} h_s(0) = 0,
\]
and that
\[
	\frac{\partial}{\partial s} h_s(s) = -h_s'(s),
\]
the last line following from $h_s(s) = 0$ and chain rule. Since $K \leq 0$, the only function satisfying \eqref{h Jacobi} vanishing more than once is identically zero. Hence, $h_s \geq 0$ on $[0,s]$ and by convexity,
\[
	0 \leq  h_s(r) \leq 1 - \frac rs \qquad \text{ for } 0 \leq r \leq s.
\]
This and the limit definition of the derivative yields
\[
	0 \leq -h_s'(s) = \frac{\partial}{\partial s} h_s(s) \leq r/s.
\]
A similar convexity argument applied to $\frac{\partial}{\partial s} h_s$ yields \eqref{convexity bounds}. Setting $J = h X$ provides existence for (1) as noted before. Moreover, we have
\[
	\langle \II_{H(v)}(X,X) , \gamma'(0) \rangle = \left\langle - \frac{D}{dr} J(0), X(0) \right\rangle = -h'(0) \geq 0
\]
since $h'(0) > 0$ contradicts \eqref{h Jacobi bounded} by comparison with the flat case. Hence, we have (2).

Let $t \mapsto v(t)$ be a continuous function from a small neighborhood of $0\in \R$ to $SM$ and likewise index our geodesic $\gamma(t,r)$, parallel unit normal vector field $X(t,r)$, and functions $h(t,r)$ and $h_s(t,r)$. For each $r_0 > 0$, we will show
\begin{equation} \label{h continuous}
	\lim_{t \to 0} h(t,r) = h(0,r) \qquad \text{ uniformly for } r \in [0,r_0].
\end{equation}
It follows by \eqref{h Jacobi} and uniform convergence that
\[
	\lim_{t \to 0} \frac{\partial}{\partial r} h(t,0) = \frac{\partial}{\partial r} h(0,0)
\]
from which follows (3). Fix $1 > \epsilon > 0$ and let $s = 3r_0/\epsilon$. We have
\begin{align*}
	&|h(t,r) - h(0,r)|\\
	&\leq |h(t,r) - h_{s}(t,r)| + |h_{s}(t,r) - h_{s}(0,r)| + |h_{s}(0,r) - h(0,r)|.
\end{align*}
Using \eqref{convexity bounds} and integrating over $[s,\infty)$, the first and last terms are both bounded above by $\epsilon/3$. We can make the middle term less than $\epsilon/3$ by noting $h_s(t,r)$ is uniformly continuous in $t$ for $r \in [0,r_0]$ and taking $|t|$ sufficiently small. Hence, $|h(t,r) - h(0,r)| < \epsilon$ and we have \eqref{h continuous}.
\end{proof}

Let $v \in SM$ and $\gamma$ be a geodesic with $\gamma'(0) = v$ as in Definition \ref{limiting curvature}. Note $\II_{H(v)}$ only ever depends on the sectional curvature of $M$ along $\gamma(r)$ for $r \geq 0$. In particular if $X$ and $Y$ are vectors perpendicular to $v$ and $\tilde X$, $\tilde Y$, and $\tilde v$ are their respective lifts to the universal cover, $\II_{H(\gamma'(0))}(X,Y)$ lifts to $\II_{H(\tilde v)}(\tilde X, \tilde Y)$. Moreover, the proof of existence of $\II_{H(v)}$ in Proposition \ref{limiting curvature prop} shows that $\II_{H(\tilde v)}$ is the limit of the fundamental forms $\II_{S_{\tilde \gamma(r)}(r)}$ of spheres centered at $\tilde \gamma(r)$ with radius $r$ as $r$ tends to infinity, as previously remarked.

The second fundamental forms of spheres and horospheres both satisfy a revealing ordinary differential equation. Let $\gamma$ be a geodesic in $\tilde M$ and let $X$ be a unit normal parallel vector field along $\gamma$. Moreover suppose $J$ is a Jacobi field along $\gamma$ for which $J(0) = X$ and
\[
	J(r) = O(1) \qquad \text{ for } r \leq 0.
\]
Note,
\[
	\langle \II_{H(-\gamma')}(X, X), -\gamma' \rangle = \frac{\langle \frac{D}{dr} J, X \rangle}{\langle J, X \rangle}.
\]
Differentiating the right hand side shows that $\langle \II_{H(-\gamma')}(X, X), -\gamma' \rangle$ satisfies the ordinary differential equation
\begin{equation}\label{ODE}
	\frac{d}{dr} u + K(X,\gamma'(r)) + u^2 = 0
\end{equation}
where $K$ is the sectional curvature of $\tilde M$. 
The same equation is satisfied if we replace $\II_{H(-\gamma'(r))}$ with $\II_{S_{\gamma(0)}(r)}$. To see this, let $J$ and $Y$ be respective angular and radial coordinate vector fields of some spherical coordinates about $\gamma(0)$, defined on a neighborhood of $\gamma(r)$ for $r > 0$. In particular, we choose $Y$ so that $\gamma' = Y$, $J$ restricts to a Jacobi field along $\gamma$ with $J(0) = 0$ and $\frac{D}{dr} J(0) = X(0)$, and
\[
	0 = [J,Y] = \nabla_J Y - \nabla_Y J.
\]
Since $J$ is parallel to $X$ and vanishes uniquely at $\gamma(0)$, $X = J/|J|$. Hence,
\[
	\langle \II_{S_{\gamma(0)}(r)}(X,X), - \gamma' \rangle = -\frac{\langle \nabla_J X,\gamma' \rangle}{\langle J, X \rangle} = \frac{\langle X, \nabla_J Y \rangle}{\langle J, X \rangle} = \frac{\langle X, \nabla_{Y} J \rangle}{\langle J, X \rangle} = \frac{\langle X, \frac{D}{dr} J \rangle}{\langle J, X \rangle},
\]
so similarly satisfies \eqref{ODE}. This ordinary differential equation provides us with means to bound $\langle \II_{H(v)}, v\rangle$ and to compare the second fundamental forms of spheres of large radius to those of horocycles.

\begin{proposition} \label{curvature properties}
The following are true.
\begin{enumerate}
\item If the sectional curvature $K$ of $M$ satisfies bounds $-a^2 \geq K \geq -b^2$ for some nonnegative constants $a$ and $b$, then
\[
	a|X|^2 \leq \langle \II_{H(v)}(X,X), v \rangle \leq b|X|^2
\]
for all $v$.
\item For all $r > 0$,
\[
	0 < \langle \II_{S_{\gamma(0)}(r)}(X,X), -\gamma'(r) \rangle - \langle \II_{H(-\gamma'(r))}(X,X), - \gamma'(r) \rangle \leq r^{-1}|X|^2.
\]
\end{enumerate}
\end{proposition}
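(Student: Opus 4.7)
The plan is to leverage the scalar Riccati identity
\[
u' + K + u^2 = 0
\]
derived immediately before the proposition, which is satisfied by both
\[
u_H(r) := \langle \II_{H(-\gamma'(r))}(X,X), -\gamma'(r)\rangle \quad \text{and} \quad u_S(r) := \langle \II_{S_{\gamma(0)}(r)}(X,X), -\gamma'(r)\rangle
\]
for $X$ a unit parallel normal vector field along $\gamma$, with $K = K(X,\gamma'(r))$. The two share the same ODE but are distinguished by their reference data: $u_H = h'/h$ where $h > 0$ solves $h'' + Kh = 0$ with $h(0) = 1$ and $h$ bounded on $(-\infty, 0]$, while $u_S$ comes from the Jacobi field with $J(0) = 0$ and $\frac{D}{dr}J(0) = X$, giving the Euclidean-like expansion $u_S(r) = 1/r + O(r)$ as $r \to 0^+$. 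By homogeneity I may assume $|X| = 1$ throughout.

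For part (1), I will apply a Sturm-type Wronskian comparison with exponential solutions of the constant-curvature model equations. Since $K \leq 0$, $h$ is convex on $(-\infty,0]$, and the boundedness hypothesis forces $h, h' \geq 0$ with $h' \to 0$ as $r \to -\infty$ (otherwise a support line would violate the boundedness). Setting $h_b(r) := e^{br}$, a direct computation gives
\[
(hh_b' - h'h_b)' = (b^2 + K)\, h\, h_b \geq 0
\]
using $K \geq -b^2$, and the Wronskian tends to $0$ as $r \to -\infty$ since $h, h'$ are bounded while $h_b, h_b'$ decay exponentially. Hence $hh_b' - h'h_b \geq 0$ on $(-\infty, 0]$, and evaluation at $r = 0$ reads $b - h'(0) \geq 0$, i.e., $u_H(0) \leq b$. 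The mirror computation with $h_a(r) := e^{ar}$, using $K \leq -a^2$ to get $(hh_a' - h'h_a)' \leq 0$ with the same vanishing at $-\infty$, delivers the lower bound $u_H(0) \geq a$.

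For part (2), set $W := u_S - u_H$. Subtracting the two Riccati equations cancels $K$ and leaves the linear homogeneous ODE
\[
W' = -(u_S + u_H)\, W,
\]
whose solutions have constant sign wherever they are nonzero. As $r \to 0^+$, $u_H$ is continuous and bounded (the horospherical second fundamental form depends continuously on the base point and direction) while $u_S \sim 1/r$, so $W(r) \to +\infty$; constant-sign then forces $W(r) > 0$ for all $r > 0$.

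For the upper bound $W(r) \leq 1/r$, put $\phi(r) := rW(r) - 1$. Using $r(u_S + u_H) = rW + 2r u_H = 1 + \phi + 2ru_H$, one computes
\[
\phi' = W + rW' = W\bigl(1 - r(u_S + u_H)\bigr) = -W\phi - 2r u_H W,
\]
a linear inhomogeneous equation $\phi' + W\phi = -2ru_H W \leq 0$. Variation of parameters with integrating factor $\exp\bigl(\int_{r_0}^r W\bigr)$ yields
\[
\phi(r) = \phi(r_0)\, e^{-\int_{r_0}^r W} - \int_{r_0}^r 2s\, u_H(s)\, W(s)\, e^{-\int_s^r W}\, ds,
\]
and I let $r_0 \to 0^+$: since $\phi(r_0) = O(r_0)$ (from $r u_S = 1 + O(r^2)$ and $r u_H = O(r)$) and $e^{-\int_{r_0}^r W} = O(r_0/r)$ (because $W(s) = 1/s + O(1)$ near zero), the boundary term vanishes, while the integrand $s\, u_H(s)\, W(s) = O(1)$ is integrable near $s = 0$. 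What remains is $\phi(r) \leq 0$, i.e., $W(r) \leq 1/r$. The main technical delicacy will be pinning down the small-$r$ asymptotics of $u_S$ (and hence of $W$) sharply enough to justify the $r_0 \to 0^+$ limit despite the blow-up of the integrating factor.
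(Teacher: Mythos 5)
Your proposal is correct, but it reaches part (1) by a genuinely different route than the paper. The paper argues directly on the Riccati equation \eqref{ODE}: it uses Proposition \ref{limiting curvature prop} to know that $u(r)=\langle \II_{H(-\gamma'(r))}(X,X),-\gamma'(r)\rangle$ is nonnegative and \emph{uniformly bounded on all of} $\R$ (the uniform bound coming from continuity in $v$ together with compactness of $SM$), and then derives a contradiction along the backward ray: $u(r_0)>b$ forces $u'\le b^2-u(r_0)^2<0$ for $r\le r_0$, contradicting boundedness, while $u(r_0)<a$ forces $u'\ge a^2-u(r_0)^2>0$ for $r\le r_0$, contradicting $u\ge 0$. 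You instead run a Sturm--Wronskian comparison of the bounded-backward solution $h$ of $h''+Kh=0$ against the constant-curvature exponentials $e^{ar}$ and $e^{br}$, reading off $a\le h'(0)\le b$ from the sign of the Wronskian at $r=0$; this buys a more self-contained proof, needing only the defining boundedness of $h$ on $(-\infty,0]$ rather than the global, compactness-based bound on $u$, at the cost of a degenerate case your wording misses: when $a=0$ or $b=0$ the comparison solution does not decay at $-\infty$ (both cases are trivial, since $a=0$ reduces to $u\ge 0$, which you already established via $h'\ge0$, and $b=0$ forces $K\equiv0$, hence $h$ constant). For part (2) your first half coincides with the paper's (the difference $W=u_S-u_H$ solves a linear first-order equation, hence keeps its sign, and is positive near $r=0$); for the upper bound the paper simply notes $W'\le -W^2$, so $(1/W)'\ge 1$ and $1/W(r)\ge r$, whereas you substitute $\phi=rW-1$ and integrate a linear differential inequality. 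Your computation is valid and, in fact, easier than you fear: since $W>0$ the integrating factor satisfies $e^{-\int_{r_0}^{r}W}\le 1$, so $\phi(r)\le \phi(r_0)\,e^{-\int_{r_0}^{r}W}\le |\phi(r_0)|\to 0$ as $r_0\to 0^{+}$, using only $r_0\,u_S(r_0)\to 1$ and boundedness of $u_H$; no sharp small-$r$ asymptotics of $W$ and no control of the blow-up of the integrating factor are needed, so the ``technical delicacy'' you flag at the end evaporates (and the full variation-of-parameters formula is likewise unnecessary).
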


\begin{proof} Let $X$ be a unit length, parallel vector field normal to $\gamma$ and set
\[
	u(r) = \langle \II_{H(-\gamma'(r))}(X(r),X(r)), -\gamma'(r) \rangle \qquad r \in \R
\]
and
\[
	v(r) = \langle \II_{S_{\gamma(0)}(r)}(X(r),X(r)), -\gamma'(r) \rangle \qquad r > 0.
\]
Both $u$ and $v$ satisfy \eqref{ODE} as argued above.

By Proposition \ref{limiting curvature prop}, $u \geq 0$ and is uniformly bounded by continuity of $\II_{H(v)}$ and compactness of $SM$. If $u(r_0) > b$ for some $r_0 \in \R$, then
\[
	u'(r) \leq b^2 - u^2(r_0) < 0 \qquad \text{ for } r \leq r_0,
\]
which contradicts boundedness. If $u(r_0) < a$, then
\[
	u'(r) \geq a^2 - u^2(r_0) > 0 \qquad \text{ for } r \leq r_0,
\]
which contradicts nonpositivity. (1) follows.

(2) can be obtained by using the methods in the proof of Proposition \ref{limiting curvature prop}, but it also follows from \eqref{ODE}. Note,
\[
	v'(r) - u'(r) = - (v^2(r) - u^2(r)).
\]
Since $u$ is bounded and the curvature of small spheres is large, $v(r) - u(r) > 0$ for small $r$. Since $v' - u' = 0$ where $v = u$, $v(r) - u(r) > 0$ for all $r > 0$, hence the lower bound in (2). Then,
\[
	v'(r) - u'(r) = -\frac{v(r) + u(r)}{v(r) - u(r)}(v(r) - u(r))^2 \leq -(v(r) - u(r))^2,
\]
which implies the upper bound by an elementary computation.
\end{proof}

\begin{remark} \label{curvature remark}
Part (2) of the proposition above implies the difference between the second fundamental form of a sphere and that a tangential horocycle is always nondegenerate. This provides us with part (2) of Corollary \ref{criteria} from Theorem \ref{main 2}. Part (1) shows that if $K$ is strictly negative, $\langle \II_{H(v)}(X,X), v \rangle$ is strictly positive definite. Hence, part (3) of the corollary.
\end{remark}


\section{The Conclusion of the Proofs of Theorems \ref{main 1} and \ref{main 2}}

In this section we adapt the tools we developed in Section \ref{GEOMETRY} to local coordinates to prove Propositions \ref{main 1 prop} and \ref{main 2 prop}. The respective main results, Theorems \ref{main 1} and \ref{main 2}, follow. Recall we are trying to bound an oscillatory integral of the form
\begin{equation} \label{final osc int}
	\int_{\R^d} \int_{\R^d} a(x,y) e^{\pm i \lambda \phi(x,y)} \, dx \, dy 
\end{equation}
where
\[
	\phi(x,y) = d_\tg(\alpha \tilde x, \tilde y)
\]
and
\[
	a(x,y) = a_{\alpha, \pm}(T,\lambda; x,y).
\]
We have determined much of the behavior of $\phi$ in the last section, and in Section 3, we determined that $\supp a \subset \supp b \times \supp b$, and
\[
	|\partial_x^{\beta_1} \partial_y^{\beta_2} a(x,y)| \leq C_{\beta} e^{C_\beta T}
\]
for multiindices $\beta$, among other things.

After taking the supports of $b$ to be small and perhaps taking a smooth extension of $\Sigma$ in $M$, we assume $\supp b$ is contained inside a ball $B \subset \R^d$ centered at $0$ in our Fermi local coordinates \eqref{fermi coordinates}. Furthermore, we assume the phase function $\phi$ is defined on $2B \times 2B$ with the same center but twice the radius. Fix $(x,y) \in 2B \times 2B$ and let $v_1(x,y)$ and $v_2(x,y)$ are the unit vectors denoting the arriving and departing directions, respectively, of the geodesic in $\tilde M$ starting at $\tilde y \in \tilde \Sigma$ and ending at $\alpha \tilde x \in \alpha \tilde \Sigma$. By abuse of notation, we will also use $v_1$ and $v_2$ to denote their push-forwards to $M$ through the covering map where appropriate.

We fix a constant $\epsilon > 0$ and consider $\alpha \in \Gamma_U \setminus \Gamma_R$ for which
\begin{equation}\label{nonst alpha}
	|\nabla \phi(x,y)| > \epsilon \qquad \text{ for some } (x,y) \in 2B \times 2B,
\end{equation}
where here $\nabla$ is the gradient with respect to the product metric on $\Sigma \times \Sigma$.
By Remark \ref{bounded hessian remark} and \eqref{hessian computation final}, the Hessian of $\Hess_{\Sigma \times \Sigma} \phi$ is a uniformly bounded quadratic form for non-identity $\alpha$. Hence by the mean value theorem, we may restrict $B$ so that
\[
	|\nabla \phi(x,y)| \geq \epsilon/2 \qquad \text{ for all } (x,y) \in 2B \times 2B
\]
for all $\alpha$ satisfying \eqref{nonst alpha}. Since the metric tensor of $\Sigma \times \Sigma$ is nearly the identity at $(0,0)$, by taking $B$ small we ensure that the Euclidean gradient of $\phi$ in local coordinates is bounded below by $\epsilon/4$. The oscillatory integral \eqref{final osc int} is then bounded by a constant multiple of $e^{C_N T} \lambda^{-N}$ for any suitably large $N$ by Part (1) of Lemma \ref{stationary phase lemma}. 

All that remains is the situation where
\begin{equation}\label{small gradient}
	|\nabla \phi| \leq \epsilon \qquad \text{ on } 2B \times 2B.
\end{equation}

Now is when we really capitalize on our ability to take $R$ large and restrict $B$ and $U$. Recall that $\nabla_{x,y}^2 \phi$ is the Euclidean Hessian matrix of $\phi$ in the variables $x$ and $y$. We eventually want to show
\begin{equation} \label{hessian at 0}
	\nabla_{x,y}^2 \phi(x,y) = \begin{bmatrix} \nabla_{x}^2 \phi(0,0) & 0 \\ 0 & \nabla_y^2 \phi(0,0) \end{bmatrix} + E(x,y)
\end{equation}
for all $\alpha \in \Gamma_U \setminus \Gamma_R$, where $E$ is an error matrix whose entries are controlled by an adjustably small constant uniform in $\alpha$. By \eqref{hessian in coordinates} and since the Christoffel symbols of the product metric on $\Sigma \times \Sigma$ vanish at $(0,0)$, we may restrict the support of $b$ so that
\begin{align*}
	\partial_{x_i} \partial_{x_j} \phi(x,y) &= \Hess_{\Sigma \times \Sigma} \phi(\partial_{x_i}, \partial_{x_j}) \\
	\partial_{y_i} \partial_{y_j} \phi(x,y) &= \Hess_{\Sigma \times \Sigma} \phi(\partial_{y_i}, \partial_{y_j}), \qquad \text{ and } \\
	\partial_{x_i} \partial_{y_j} \phi(x,y) &= \Hess_{\Sigma \times \Sigma} \phi(\partial_{x_i}, \partial_{y_j})
\end{align*}
modulo some small, controllable error terms for $i,j = 1,\ldots, d$. Hence, it suffices to show
\begin{align} \label{hessian at 0'}
	\Hess_{\Sigma \times \Sigma} \phi(x,y)(\partial_{x_i},\partial_{x_j}) &= \Hess_{\Sigma \times \Sigma} \phi(0,0)(\partial_{x_i},\partial_{x_j}), \\
	\nonumber \Hess_{\Sigma \times \Sigma} \phi(x,y)(\partial_{y_i},\partial_{y_j}) &= \Hess_{\Sigma \times \Sigma} \phi(0,0)(\partial_{y_i},\partial_{y_j}), \qquad \text{ and } \\
	\nonumber \Hess_{\Sigma \times \Sigma} \phi(x,y)(\partial_{x_i},\partial_{y_j}) &= 0
\end{align}
modulo small, controllable error terms which are bounded independently of $\alpha$. Note the third line follows by taking $R$ large and invoking part (2) of Lemma \ref{hessian computation}.

Fix indices $i$ and $j$. We claim that the diameter of the set
\[
	\{ \Hess_{\Sigma \times \Sigma}\phi(x,y)(\partial_{x_i}, \partial_{x_j}) : x,y \in B \}
\]
can be controlled by taking $B$ and $\epsilon$ small and $R$ large. By part (2) of Proposition \ref{curvature properties}
\[
	\langle \II_{S_{\tilde y}(d_\tg)}(\partial_{x_i}', \partial_{x_j}'), -v_1 \rangle = \langle \II_{H(-v_1)}(\partial_{x_i}', \partial_{x_j}'), -v_1 \rangle 
\]
modulo an error term controllable by taking $R$ large. Hence by \eqref{hessian computation final}, we have
\[
	\Hess_{\Sigma \times \Sigma}\phi(\partial_{x_i}, \partial_{x_j}) = \langle \II_{\Sigma}(\partial_{x_i}, \partial_{x_j}) , v_1 \rangle - \langle \II_{H(-v_1)}(\partial_{x_i}', \partial_{x_j}') , v_1 \rangle
\]
modulo controllable error terms. The diameter of the set of values achieved by the first term on the right is controlled by taking $v_1$ close to normal, i.e. by taking $\epsilon$ small, and similarly for the second term. The first line of \eqref{hessian at 0'} follows. The second line follows similarly. We now have \eqref{hessian at 0} and are ready to prove our propositions.

\begin{proof}[Proof of Proposition \ref{main 1 prop}] 
	We will select $d$ coordinates in which to use the method of stationary phase in order to obtain the desired bound
	\begin{equation} \label{final osc int 1}
		\left|\int_{\R^d} \int_{\R^d} a(x,y) e^{\pm i \lambda \phi(x,y)} \, dx \, dy \right| \lesssim e^{CT} \lambda^{-d/2}
	\end{equation}
	for \eqref{final osc int}. By an orthogonal change of variables on $\R^d$, we may take $\partial_{x_i}$ for $i = 1,\ldots, d$ to align with the principal directions of $\Sigma$ at $0$. Now,
	\begin{equation}\label{hess xx}
		\partial_{x_i}\partial_{x_j} \phi(0,0) = \begin{cases} \langle \kappa_i, v_1 \rangle - \langle \II_{H(-v_1)}(\partial_{x_i}', \partial_{x_i}'), v_1 \rangle & i = j \\
		- \langle \II_{H(-v_1)}(\partial_{x_i}', \partial_{x_j}'), v_1 \rangle & i \neq j
		\end{cases}
	\end{equation}
	and
	\[
		\partial_{y_i}\partial_{y_j} \phi(0,0) = \begin{cases} \langle \kappa_i, -v_2 \rangle - \langle \II_{H(v_2)}(\partial_{y_i}', \partial_{y_i}'), -v_2 \rangle & i = j \\
		- \langle \II_{H(v_2)}(\partial_{y_i}', \partial_{y_j}'), -v_2 \rangle & i \neq j
		\end{cases}
	\]
	where $\kappa_i = \II_\Sigma(\partial_{x_i}, \partial_{x_i})$ is the $i$th principal curvature vector. By part (1) of Proposition \ref{curvature properties} and since the sectional curvature of $M$ is strictly negative, we can take $\epsilon$ small enough so that
	\begin{align}
		\label{H pos def} \sum_{i,j = 1}^d \langle \II_{H(-v_1)}(\partial_{x_i}', \partial_{x_j}'), -v_1 \rangle \xi_i \xi_j &\geq c|\xi|^2 \qquad \text{ and } \\
		\nonumber \sum_{i,j = 1}^d \langle \II_{H(v_2)}(\partial_{y_i}', \partial_{y_j}'), v_2 \rangle \xi_i \xi_j &\geq c|\xi|^2
	\end{align}
	for all $\xi \in \R^d$ and for some positive constant $c$ at $(x,y) = (0,0)$. By taking $\epsilon$ small, we can ensure $v_1$ and $v_2$ are close and that 
	\begin{equation}\label{selection}
		\langle \kappa_i, v_1 \rangle \geq -c/2 \qquad \text{ or } \qquad \langle \kappa_i, -v_2 \rangle \geq -c/2
	\end{equation}
	for each $i = 1,\ldots, d$. We pick coordinates $z = (z_1,\ldots, z_d)$ where $z_i = x_i$ if $\langle \kappa_i, v_1 \rangle \geq -c/2$ and $z_i = y_i$ if $\langle \kappa_i, -v_2 \rangle \geq -c/2$. By reordering, assume that
	\[
		z = (x_1, \ldots, x_\ell, y_{\ell+1}, \ldots, y_d)
	\]
	for some $\ell \in \{0,1,\ldots,d\}$, and let $w = (y_1,\ldots, y_\ell, x_{\ell+1},\ldots, x_d)$ be the complimentary coordinates. We bound the left side of \eqref{final osc int 1} by
	\[
		\int_{\R^d} \left| \int_{\R^d} a(x,y) e^{\pm i \lambda \phi(x,y)} \, dz \right| \, dw
	\]
	and use the method of stationary phase on the inner integral to obtain the desired bound. By \eqref{hessian at 0},
	\[
		\nabla_z^2 \phi(x,y) = \begin{bmatrix} \nabla_{x_1,\ldots,x_\ell}^2 \phi(0,0) & 0 \\
		0 & \nabla_{y_{\ell+1}, \ldots, y_d}^2 \phi(0,0) \end{bmatrix} + E(x,y).
	\]
	Now by \eqref{hess xx}, \eqref{H pos def}, and our selection of coordinates by \eqref{selection},
	\[
		|\nabla_{x_1,\ldots,x_\ell}^2 \phi(0,0) \xi| \geq \frac{c}{2} |\xi| \qquad \text{ for all } \xi \in \R^\ell
	\]
	and similarly
	\[
		|\nabla_{y_{\ell+1}, \ldots, y_d}^2 \phi(0,0) \xi| \geq \frac{c}{2} |\xi| \qquad \text{ for all } \xi \in \R^{d-\ell}.
	\]
	Hence if $E(x,y)$ is made small enough,
	\[
		|\nabla_{z}^2 \phi(x,y) \xi| \geq \frac{c}{4} |\xi| \qquad \text{ for all } \xi \in \R^d, \ x,y \in 2B.
	\]
	The proposition follows after an application of Lemma \ref{stationary phase lemma}.
\end{proof}

\begin{proof}[Proof of Proposition \ref{main 2 prop}]
	Let $v$ be the normal vector to $\Sigma$ which points in a similar direction to $v_1$ and $v_2$. By the hypotheses \eqref{main 2 hypotheses}, we select two subspaces $V_1$ and $V_2$ of $T\Sigma$, with respective dimensions $\ell_1$ and $\ell_2$ with $\ell_1 + \ell_2 = n$, and on which the restriction of the quadratic form $\langle \II_\Sigma - \II_{H(-v)}, v \rangle$ to $V_1$ and the restriction of $\langle \II_\Sigma - \II_{H(v)}, -v \rangle$ to $V_2$ are nondegenerate. In particular, select local coordinates $(x_1,\ldots, x_{\ell_1})$ of $V_1$ such that $\partial_{x_1}, \ldots, \partial_{x_{\ell_1}}$ forms an orthonormal basis at $0$ at which
	\[
		|\langle \II_\Sigma(\partial_{x_i}, \partial_{x_i}) - \II_{H(-v)}(\partial_{x_i}, \partial_{x_i}), v \rangle| \geq 4c \qquad \text{ for } i = 1,\ldots, \ell_1
	\]
	for some positive constant $c$
	and
	\[
		\langle \II_\Sigma(\partial_{x_i}, \partial_{x_j}) - \II_{H(-v)}(\partial_{x_i}, \partial_{x_j}), v \rangle = 0 \qquad \text{ for } i \neq j.
	\]
	By ensuring $\epsilon$ in \eqref{small gradient} is sufficiently small, we take
	\[
		|\partial_{x_i} \partial_{x_i} \phi(0,0)| = |\langle \II_\Sigma(\partial_{x_i}, \partial_{x_i}) - \II_{H(-v_1)}(\partial_{x_i}', \partial_{x_i}'), v_1 \rangle| \geq 2c \qquad \text{ for } i = 1,\ldots, \ell_1
	\]
	and
	\[
		|\partial_{x_i} \partial_{x_j} \phi(0,0)| = |\langle \II_\Sigma(\partial_{x_i}, \partial_{x_j}) - \II_{H(-v_1)}(\partial_{x_i}', \partial_{x_j}'), v_1 \rangle| \leq c/8n \qquad \text{ for } i \neq j.
	\]
	We similarly select a parametrization $(y_1,\ldots, y_{\ell_2})$ of $V_2$ for which
	\[
		|\partial_{y_i} \partial_{y_i} \phi(0,0)| = |\langle \II_\Sigma(\partial_{y_i}, \partial_{y_i}) - \II_{H(v_2)}(\partial_{y_i}', \partial_{y_i}'), -v_2 \rangle| \geq 2c \qquad \text{ for } i = 1,\ldots, \ell_2
	\]
	and
	\[
		|\partial_{y_i} \partial_{y_j} \phi(0,0)| = |\langle \II_\Sigma(\partial_{y_i}, \partial_{y_j}) - \II_{H(v_2)}(\partial_{y_i}', \partial_{y_j}'), -v_2 \rangle| \leq c/8n \qquad \text{ for } i \neq j.
	\]
	By bounding each of the entries of $E(x,y)$ in \eqref{hessian at 0} by $c/8n$, the $n \times n$ Hessian matrix $\nabla_{x_1,\ldots,x_{\ell_1}, y_1,\ldots, y_{\ell_2}}^2 \phi(x,y)$ has diagonal terms whose absolute values are bounded below by $c$, and off-diagonal terms bounded by $c/4n$. It follows that
	\[
		|\nabla_{x_1,\ldots,x_{\ell_1}, y_1,\ldots, y_{\ell_2}}^2 \phi(x,y) \xi| \geq \frac{c}{2}|\xi| \qquad \text{ for all } \xi \in \R^{n}, \ x,y \in 2B.
	\]
	This and Lemma \ref{stationary phase lemma} show us
	\[
		\left| \idotsint a(x,y) e^{i\pm \lambda \phi(x,y)} \, dx_1 \cdots dx_{\ell_1} dy_1 \cdots dy_{\ell_2} \right| \lesssim e^{CT} \lambda^{-n/2}
	\]
	uniformly over the remaining variables $x_{\ell+1},\ldots,x_{n-1}$ and $y_{\ell+1}, \ldots, y_{n-1}$. The integral in \eqref{final osc int} hence satisfies the same bounds.
\end{proof}


\section{Appendix}

\subsection{Exponential bounds on mixed derivatives}

The following proposition allows us to obtain exponential bounds on mixed derivatives of functions $f(x,y)$ in $C^\infty(\tilde M \times \tilde M)$ if we are only provided with exponential bounds on pure derivatives in both variables. We use this to obtain bounds on the mixed derivatives of the amplitudes in Lemma \ref{kernel lemma}.

\begin{proposition} \label{appendix prop} Let $(M,g)$ be a compact, $n$-dimensional, boundaryless Riemannian manifold with nonpositive sectional curvature and let $(\tilde M, \tilde g)$ denote the universal cover of $M$ equipped with the pullback metric. Let $f : \tilde M \times \tilde M \to \R$ be a function satisfying bounds
\[
	|\Delta_x^j f(x,y)| \leq C_j e^{C_j d_\tg(x,y)} \qquad \text{ and } \qquad	
	|\Delta_y^k f(x,y)| \leq C_k e^{C_j d_\tg(x,y)}
\]
where $d_\tg(x,y) \geq 1$. Then,
\[
	|\Delta_x^j \Delta_y^k f(x,y)| \leq C_{j,k} e^{C_{j,k} d_\tg(x,y)} \qquad \text{ for } d_\tg(x,y) \geq 1,
\]
where the constants $C_{j,k}$ depend only on the constants $C_j$ and $C_k$ and the manifold.
\end{proposition}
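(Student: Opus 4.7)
The plan is to localize in a product coordinate chart and then use a Fourier-analytic interpolation to trade pure Laplacian powers for mixed ones. Fix $(x_0, y_0) \in \tilde M \times \tilde M$ with $d := d_\tg(x_0, y_0) \geq 1$ and choose normal coordinate charts of radius $r_0$ less than the injectivity radius of $M$ around $x_0$ and $y_0$. Because $M$ is compact, the metric coefficients and the ellipticity constants of $\Delta_x$ and $\Delta_y$ in these charts are bounded uniformly in $(x_0,y_0)$, so every constant that appears below will depend only on $j$, $k$, and $M$.

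For each fixed $y$, the hypothesis bounds $\|\Delta_x^j f(\cdot, y)\|_{L^2(B_{r_0}(x_0))}$ and $\|f(\cdot,y)\|_{L^2(B_{r_0}(x_0))}$ by $C_j e^{C_j d}$, so interior elliptic regularity for $\Delta_x$ gives $\|f(\cdot, y)\|_{H^{2j}_x(B_{r_0/2}(x_0))} \leq C_j e^{C_j d}$, and similarly in $y$. Take a product cutoff $\chi(x,y) = \chi_1(x)\chi_2(y)$ supported in $B_{r_0/2}(x_0) \times B_{r_0/2}(y_0)$ with $\chi \equiv 1$ on a neighborhood of $(x_0, y_0)$, and regard $\chi f$ as a compactly supported function on $\R^n \times \R^n$ via the chart. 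The product rule combined with the pure Sobolev estimates above then gives
\[
\|\chi f\|_{H^{2j,0}(\R^n \times \R^n)} \leq C_j e^{C_j d}, \qquad \|\chi f\|_{H^{0,2k}(\R^n \times \R^n)} \leq C_k e^{C_k d},
\]
for every $j$ and $k$, where $H^{s_1, s_2}$ denotes the anisotropic Sobolev norm with $s_1$ derivatives in $x$ and $s_2$ in $y$.

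The key step is a Young-type inequality on the Fourier side. From $A^\theta B^{1-\theta} \leq \theta A + (1-\theta)B$ with $A = (1 + |\xi|^2)^{s_1 + s_2}$, $B = (1 + |\eta|^2)^{s_1 + s_2}$ and $\theta = s_1/(s_1+s_2)$, one gets
\[
(1 + |\xi|^2)^{s_1}(1 + |\eta|^2)^{s_2} \leq (1 + |\xi|^2)^{s_1+s_2} + (1 + |\eta|^2)^{s_1 + s_2}.
\]
Integrating against $|\widehat{\chi f}(\xi, \eta)|^2$ yields $\|\chi f\|_{H^{s_1, s_2}}^2 \leq \|\chi f\|_{H^{s_1+s_2, 0}}^2 + \|\chi f\|_{H^{0, s_1+s_2}}^2$, and combining with the bounds of the previous paragraph we obtain mixed anisotropic bounds $\|\chi f\|_{H^{s_1, s_2}} \leq C_{s_1, s_2} e^{C_{s_1, s_2} d}$ for all $s_1, s_2 \geq 0$. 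The anisotropic Sobolev embedding $H^{s_1, s_2}(\R^n \times \R^n) \hookrightarrow L^\infty$ valid for $s_1, s_2 > n/2$ (immediate by Cauchy--Schwarz on the Fourier side), applied with $s_1 = 2j + n$ and $s_2 = 2k + n$, then converts this into $\|\Delta_x^j \Delta_y^k(\chi f)\|_{L^\infty} \leq C_{j,k} e^{C_{j,k} d}$, and the proposition follows since $\chi \equiv 1$ near $(x_0, y_0)$.

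The obstacle that motivates the Fourier detour is the circularity one encounters if one tries instead to iterate elliptic regularity in a single variable at a time: converting an $L^2$ bound on $\Delta_y^k(\chi f)$ into one on $\Delta_x^j \Delta_y^k(\chi f)$ through elliptic regularity for $\Delta_x$ already requires an $L^2$ bound on the mixed quantity itself. The Young inequality on the Fourier side is precisely what trades one very-high-order pure Laplacian power for a mixed lower-order one, breaking the loop.
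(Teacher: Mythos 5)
Your proof is correct, and it takes a genuinely different technical route from the paper's, though the two share the same arithmetic heart. The paper also cuts off to a product of small balls, but it then regards the truncated function $F$ as living on the compact manifold $M\times M$, bounds the sup norm by $\|(I-\Delta_x-\Delta_y)^{n+1}\Delta_x^j\Delta_y^k F\|_{L^2(M\times M)}$ via Sobolev embedding, expands in the product eigenbasis $e_p(x)e_q(y)$, and uses $(1+\lambda_p^2+\lambda_q^2)^{2n+2}\lambda_p^{4j}\lambda_q^{4k}\lesssim 1+\lambda_p^{4(n+j+k+1)}+\lambda_q^{4(n+j+k+1)}$ to reduce everything to pure Laplacian powers of $F$; your weighted AM--GM in $(\xi,\eta)$ is exactly this same trade of a mixed product for pure high powers, carried out with the flat Fourier transform in a chart instead of with the spectral decomposition of the quotient. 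Your route pays for its locality with the interior elliptic regularity step (whose constants are indeed uniform in the base point, since charts of radius below $\inj M$ on $\tilde M$ are isometric to charts on the compact $M$), but it also gains something: because you upgrade the pure Laplacian bounds to full anisotropic Sobolev bounds \emph{before} multiplying by the cutoff, the Leibniz terms are automatically controlled, whereas the paper's corresponding claim, that $\Delta_x^{n+j+k+1}(\beta f)$ is bounded by the hypotheses alone, silently involves intermediate derivatives of $f$ as well; conversely, the paper's version needs no elliptic estimates or anisotropic Sobolev machinery, only the ordinary Sobolev embedding on $M\times M$. One caveat, shared equally by the paper's proof and harmless in the application (where the relevant distances are at least $R-1$): when $d_\tg(x_0,y_0)$ is barely above $1$, the localized points $(x,y)$ may have $d_\tg(x,y)$ slightly below $1$, where the hypothesis is not literally assumed, so one should either shrink the charts and weaken the threshold in the hypothesis or state the conclusion for $d_\tg \geq 2$.
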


\begin{proof}
Fix $x_0$ and $y_0$ in $\tilde M$ and fix a smooth function $\beta \in C_0^\infty(\R,[0,1])$ equal to $1$ near $0$ and supported in $(-\inj M, \inj M)$. Then let
\[
	F(x,y) = \beta(d_\tg(x,x_0)) \beta(d_\tg(y,y_0)) f(x,y).
\]
Note
\begin{equation}\label{appendix 2}
	|\Delta_x^j F(x,y)| \leq C_j' e^{C_j'd_\tg(x_0,y_0)} \qquad \text{ and } \qquad |\Delta_y^k F(x,y)| \leq C_k' e^{C_k'd_\tg(x_0,y_0)}
\end{equation}
by \eqref{distance bounds} for constants $C_j'$ and $C_k'$ which are independent of $x$, $y$, $x_0$, and $y_0$.
The cutoffs allow us to interpret $F$ as a function on $M \times M$.
By Sobolev embedding,
\begin{align}
\nonumber	|\Delta_x^j \Delta_y^k f(x_0,y_0)| &\leq \| \Delta_x^j \Delta_y^k F(x,y) \|_{L^\infty(M \times M)}\\
\label{appendix 1}	&\leq C \| (I - \Delta_x - \Delta_y)^{n+1} \Delta_x^j \Delta_y^k F(x,y) \|_{L^2(M\times M)}
\end{align}
where we understand $\Delta_x + \Delta_y$ as the Laplace-Beltrami operator on the product manifold $M\times M$. It follows $e_p(x) e_q(y)$ for $p,q = 0,1,2,\ldots$ form an orthonormal basis of eigenfunctions on $M \times M$ with
\[
	(\Delta_x + \Delta_y)e_p(x) e_q(y) = -(\lambda_p^2 + \lambda_q^2) e_p(x) e_q(y).
\]
We use the shorthand
\[
	\hat F(p,q) = \int_M \int_M F(x,y) \overline{e_p(x) e_q(y)} \, dx \, dy
\]
and write
\begin{align*}
	&\| (I - \Delta_x - \Delta_y)^{n+1} \Delta_x^j \Delta_y^k F(x,y) \|_{L^2(M\times M)}^2\\
	&= \sum_{p,q} (1 + \lambda_p^2 + \lambda_q^2)^{2n+2} \lambda_p^{4j} \lambda_q^{4k} |\hat F(p,q)|^2\\
	&\leq \sum_{p,q} (1 + \lambda_p^{4(n+j+k+1)} + \lambda_q^{4(n+j+k+1)}) |\hat F(x,y)|^2\\
	&= \|F\|_{L^2(M\times M)}^2 + \| \Delta_x^{n+j+k+1} F\|_{L^2(M\times M)}^2 + \| \Delta_y^{n+j+k+1} F\|_{L^2(M\times M)}^2.
\end{align*}
Finally,
\begin{align*}
	&\|F\|_{L^2(M\times M)}^2 + \| \Delta_x^{n+j+k+1} F\|_{L^2(M\times M)}^2 + \| \Delta_y^{n+j+k+1} F\|_{L^2(M\times M)}^2 \\
	&\leq \vol(M)^2 \left(\|F\|_{L^\infty(M\times M)}^2 + \| \Delta_x^{n+j+k+1} F\|_{L^\infty(M\times  M)}^2 + \| \Delta_y^{n+j+k+1} F\|_{L^\infty(M\times M)}^2 \right),
\end{align*}
and the proposition follows from \eqref{appendix 2}.
\end{proof}

\subsection{A Stationary Phase Lemma}

The following stationary phase lemma helps us obtain uniform bounds on \eqref{final osc int} in both the proofs of Propositions \ref{main 1 prop} and \ref{main 2 prop}.


\begin{lemma}\label{stationary phase lemma}
	Let
	\[
		I(\lambda) = \int_{\R^n} a(x) e^{i\lambda \phi(x)} \, dx
	\]
	where $a$ is a smooth function on $\R^n$ with support contained in the unit ball $B = \{x : |x| \leq 1\}$, and where $\phi$ is a smooth function on $\sqrt 2B = \{x : |x| \leq \sqrt 2\}$.
	\begin{enumerate}
	
		\item If $|\nabla \phi(x,y)| \geq c$ on $B$ for some $c > 0$, then
		\[
			|I(\lambda)| \leq C_N \lambda^{-N} \qquad \text{ for } \lambda \geq 1
		\]
		for $N = 1,2,\ldots$.
		
		\item If
		\[
			\left| (\nabla^2 \phi) \xi \right| \geq c |\xi| \qquad \text{ for all } \xi \in \R^n
		\]
		on $\sqrt 2B$ for some $c > 0$, then
		\[
			|I(\lambda)| \leq C \lambda^{-n/2} \qquad \text{ for } \lambda \geq 1.
		\]
	\end{enumerate}
	
	In both situations (1) and (2), the constants $C$ and $C_N$ are polynomials in $c^{-1}$ and $\sup_{B} |\partial_x^\beta a|$ and $\sup_{B} |\partial_x^\beta \phi|$ for finitely many multiindices $\beta$.
\end{lemma}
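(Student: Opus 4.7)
The plan is to prove Part (1) by standard non-stationary phase, and to prove Part (2) by combining Part (1) with a dyadic decomposition in the size of $|\nabla \phi|$.

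For Part (1), since $|\nabla \phi| \geq c$ on $B \supset \supp a$, the first-order operator $L = (i\lambda |\nabla \phi|^2)^{-1}\, \nabla \phi \cdot \nabla$ satisfies $L(e^{i\lambda \phi}) = e^{i\lambda \phi}$, and integration by parts $N$ times yields
\[
	I(\lambda) = \int_{\R^n} [(L^*)^N a](x)\, e^{i\lambda \phi(x)}\, dx.
\]
Each application of the formal adjoint $L^*$ extracts a factor of $\lambda^{-1}$ and introduces derivatives of $a$, $\phi$, and $|\nabla \phi|^{-2}$, all of which are bounded polynomially in $c^{-1}$ and in finitely many derivatives of $a$ and $\phi$. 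This immediately yields the claim.

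For Part (2), the key technical input is the sublevel set measure estimate
\[
	\bigl|\{x \in \sqrt{2}B : |\nabla \phi(x)| \leq \epsilon\}\bigr| \leq C' \epsilon^n \qquad \text{for all } \epsilon > 0,
\]
with $C'$ polynomial in $c^{-1}$ and finitely many derivatives of $\phi$. The hypothesis $|(\nabla^2 \phi)\xi| \geq c|\xi|$ makes $\nabla \phi$ a local bi-Lipschitz diffeomorphism with constants controlled by $c$ and $\|\phi\|_{C^3}$; by the quantitative inverse function theorem one covers $\sqrt{2}B$ by a controlled number of patches on which $\nabla \phi$ is a diffeomorphism onto its image, and on each such patch the change of variables $y = \nabla \phi(x)$ shows that $(\nabla \phi)^{-1}(B_\epsilon(0))$ has measure $\lesssim \epsilon^n$.

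With this estimate, fix a smooth partition of unity $1 = \sum_{k \geq 0} \psi_k$ on $(0,\infty)$ with $\psi_k$ supported at scale $\sim 2^{-k}$, and decompose $I(\lambda) = \sum_k I_k(\lambda)$, where $I_k$ carries the additional cutoff $\psi_k(|\nabla \phi(x)|/c)$. For $k$ with $c\, 2^{-k} \geq \lambda^{-1/2}$ we apply Part (1) with $N = n+1$: the lower bound $|\nabla \phi| \sim c\, 2^{-k}$ contributes $(c\, 2^{-k})^{-N}$, derivatives of the cutoff and of $|\nabla \phi|^{-2}$ each contribute up to $2^{kN}$, and the measure of the support is $\lesssim 2^{-kn}$ by the sublevel estimate, giving a contribution of order $\lambda^{-(N+n)/2}$ per scale, which sums to $O(\lambda^{-n/2})$. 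For $k$ with $c\, 2^{-k} < \lambda^{-1/2}$, the trivial bound $|I_k(\lambda)| \leq \|a\|_\infty \cdot C'(c\, 2^{-k})^n$ forms a geometric tail summing to $O(\lambda^{-n/2})$ as well.

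The main obstacle is establishing the sublevel set measure estimate: one must make the injectivity neighborhood of $\nabla \phi$ quantitative in terms of $c$ and derivative bounds on $\phi$, and polynomially bound the number of patches needed to cover $\sqrt{2}B$. Once this is done, the remaining dyadic bookkeeping is routine and produces a constant of the required polynomial form.
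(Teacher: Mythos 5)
Your argument is correct, but it takes a genuinely different route from the paper. The paper never decomposes dyadically in $|\nabla \phi|$: instead it runs a gradient-flow argument on $|\nabla \phi|$ itself, following unit-speed flow lines of $\nabla |\nabla \phi|$ and using the mean value theorem together with the hypothesis $|(\nabla^2\phi)\xi| \geq c|\xi|$ to get the growth estimate $|\nabla\phi(x_1)| - |\nabla\phi(x_0)| \geq c|x_1 - x_0|$ along each flow line. This splits the analysis into two clean cases: if $\phi$ has a critical point $x_0$ in $\sqrt{2}B$, then $|\nabla\phi(x)| \geq c|x - x_0|$ on the whole ball and the bound follows by tracking constants through the standard stationary phase proof (Proposition 4.1.2 of the Hangzhou lectures); if there is no critical point in $\sqrt{2}B$, the same flow-line estimate forces $\inf_B |\nabla\phi| \geq c(\sqrt 2 - 1)$ and nonstationary phase finishes. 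What the paper's route buys is brevity and the ability to lean on an existing stationary phase theorem, at the cost of the somewhat delicate open-closed/flow-line construction and the ``careful inspection'' needed to certify the polynomial dependence of the constants. Your route replaces that with a quantitative inverse function theorem giving the sublevel bound $|\{x : |\nabla\phi(x)| \leq \epsilon\}| \lesssim c^{-n}(1 + c^{-1}\sup|\partial^\beta\phi|)^n\,\epsilon^n$, plus a dyadic decomposition in $|\nabla\phi|$ with nonstationary phase on each piece; this is entirely self-contained (no appeal to the internal structure of a textbook proof) and transparently yields constants of the required polynomial form, at the cost of heavier bookkeeping.

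Two small points to tighten. First, your per-scale accounting is off as literally stated: at scale $|\nabla\phi| \sim \mu = c2^{-k} \geq \lambda^{-1/2}$, $N$ integrations by parts together with the measure bound give a contribution of order $\lambda^{-N}\mu^{n-2N}$ (up to admissible constants), which is \emph{not} $\lambda^{-(N+n)/2}$ uniformly in $k$; rather, for $N > n/2$ the dyadic sum is geometric and dominated by the smallest admissible scale $\mu \sim \lambda^{-1/2}$, where the contribution is $\sim \lambda^{-n/2}$. The conclusion $O(\lambda^{-n/2})$ is correct, but state it via this geometric-sum domination. Second, when you invoke the bi-Lipschitz property of $\nabla\phi$ on small patches, make the patch radius explicit, e.g.\ $r \sim \min(1, c/\sup|\partial^\beta \phi|)$ over third-order $\beta$, so that both the per-patch diameter bound for the sublevel set and the number of patches covering the support are visibly polynomial in $c^{-1}$ and the derivative sups, as the lemma's final clause requires.
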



\begin{proof}
Let $\gamma : [0,\ell] \to \R^n$ be a unit speed curve in $\sqrt 2 B$ where
\[
	\nabla \phi(\gamma(t)) \neq 0 \qquad \text{ for } t \in (0,\ell)
\]
and
\[
	\gamma'(t) = \frac{\nabla |\nabla \phi|}{|\nabla |\nabla \phi||}.
\]
Setting $\gamma(0) = x_0$ and $\gamma(\ell) = x_1$, the mean value theorem gives us a time $t \in (0,\ell)$ at which
\begin{align}
	\nonumber |\nabla \phi(x_1)| - |\nabla \phi(x_0)| &= \ell \frac{d}{dt} |\nabla |\nabla \phi(\gamma(t))|| \\
	\nonumber &= \ell \frac{d}{dt} \left|\nabla^2 \phi(\gamma(t)) \frac{\nabla \phi(\gamma(t))}{|\nabla \phi(\gamma(t))|}\right| \\
	\nonumber &\geq \ell c \\
	\label{mvt} &\geq c|x_1 - x_0|.
\end{align}
If $\phi$ has a critical point at some $x_0$ in $\sqrt 2B$, since $\nabla^2 \phi(x_0)$ is a linear isomorphism from $\R^n \to \R^n$, there exist such flow lines of $\nabla |\nabla \phi|$ in every direction starting at $x_0$. Moreover by \eqref{mvt}, $|\nabla \phi| \neq 0$ on this neighborhood minus the point $x_0$. By an open-closed argument, there exists such a flow line connecting $x_0$ to any other point $x \in \sqrt 2B$, and we conclude
\[
	|\nabla \phi(x)| \geq c|x - x_0| \qquad \text{ for all } x \in 2B
\]
from \eqref{mvt}. The desired bound on $I(\lambda)$ follows from this estimate of $|\nabla \phi(x)|$ and careful inspection of the proof of \cite[Proposition 4.1.2]{Hang}.

On the other hand, if there are no critical points of $\phi$ in $\sqrt2B$, we have
\[
	|\nabla |\nabla \phi|| = \left| \nabla^2 \phi \frac{\nabla \phi}{|\nabla \phi|} \right| \geq c > 0,
\]
and hence $|\nabla \phi|$ has no critical points on $\sqrt2B$. In particular, $|\nabla \phi|$ attains a minimum on $B$ only on the boundary. Select such a point $x_0$ on $\partial B$ and take a unit-speed curve $\gamma$ with $\gamma(0) = x_0$ and
\[
	\gamma'(t) = -\frac{\nabla |\nabla \phi|}{|\nabla |\nabla \phi||}.
\]
By the same argument as before,
\[
	|\nabla \phi(x_0)| - |\nabla \phi(\gamma(t))| \geq c t \qquad \text{ for all } t > 0.
\]
Hence, $\gamma(t)$ never intersects $B$ for $t > 0$. Moreover since $|\nabla \phi|$ is bounded below on $\sqrt 2B$, $\gamma$ must intersect the boundary $\partial(\sqrt 2B)$ at some point $x_1$ at some time $\ell$. Hence,
\[
	\inf_{B} |\nabla \phi| = |\nabla \phi(x_0)| \geq c \ell \geq c(\sqrt 2 - 1).
\]
We obtain a bound of
\[
	|I(\lambda)| \leq C_N \lambda^{-N} \qquad N = 1,2,\ldots
\]
where the constants $C_N$ are polynomials of the desired quantities by careful inspection of the proof of \cite[Proposition 4.1.1]{Hang}.
\end{proof}



\bibliography{references}{}
\bibliographystyle{alpha}

\end{document}